\newtheorem{theorem}{Theorem}[section]
\newtheorem{lemma}[theorem]{Lemma}
\newtheorem{prop}[theorem]{Proposition}
\newtheorem{obs}[theorem]{Observation}
\newtheorem{question}[theorem]{Question}
\newtheorem{cor}[theorem]{Corollary}
\newtheorem{definition}[theorem]{Definition}
\newtheorem{claim}{Claim}[theorem]
\crefname{claim}{Claim}{Claims}
\crefname{equation}{Equation}{Equations}
\crefname{obs}{Observation}{Observations}
\newenvironment{subproof}[1][\proofname]{%
  \begin{proof}[#1]%
}{%
  \end{proof}%
}
\newcommand{\usp}[1]{\operatorname{USP}{\mathop{}\!(#1)}}
\DeclareMathOperator{\diam}{diam}
\newcommand{\uspc}[1]{\operatorname{\overline{USP}}{\mathop{}\!(#1)}}
\newcommand{\uspcf}[1]{\lfloor\operatorname{\overline{USP}}\rfloor{\mathop{}\!(#1)}}   
\tikzstyle{vertex}=[circle, draw, inner sep=0pt, minimum size=6pt]
\newcommand{\vertex}{\node[vertex]}
\newcommand{\equal}{=}
\newcommand{\M}{\mathrm{M}}
\newcommand{\Z}{\mathrm{Z}}
\definecolor{Green}{RGB}{34, 139, 34}
\title{A combinatorial bound on the number of distinct eigenvalues of a graph}
\author{Sarah Allred\thanks{Department of Mathematics \& Statistics, University of South Alabama,  Mobile, AL 36688, USA (sarahallred@southalabama.edu).} \and
Craig Erickson\thanks{Amazon, Minneapolis, MN, USA (craigeri@amazon.com). Work done while at Hamline University.} \and
Kevin Grace\thanks{Department of Mathematics \& Statistics, University of South Alabama,  Mobile, AL 36688, USA (kevingrace@southalabama.edu).} \and
H.~Tracy Hall\thanks{Hall Labs LLC, Provo, UT 84606, USA (H.Tracy@gmail.com).} \and
Alathea Jensen\thanks{Department of Mathematics and Computer Science, Susquehanna University, Selinsgrove, PA 17870, USA (jensena@susqu.edu).}
}
\begin{document}

\maketitle

\begin{abstract}
The smallest possible number of distinct eigenvalues of a graph $G$, denoted by $q(G)$, has a combinatorial bound in terms of unique shortest paths in the graph. In particular, $q(G)$ is bounded below by $k$, where $k$ is the number of vertices of a unique shortest path joining any pair of vertices in $G$. Thus, if $n$ is the number of vertices of $G$, then $n-q(G)$ is bounded above by the size of the complement (with respect to the vertex set of $G$) of the vertex set of the longest unique shortest path joining any pair of vertices of $G$. The purpose of this paper is to commence the study of the minor-monotone floor of $n-k$, which is the minimum of $n-k$ among all graphs of which $G$ is a minor. Accordingly, we prove some results about this minor-monotone floor. 
\end{abstract}

\section{Motivation and background}

The Inverse Eigenvalue Problem for a Graph (IEPG) starts with a pattern of
zero and nonzero constraints
for a real symmetric matrix, described by a graph $G$ on $n$ vertices, and asks what spectra are possible
for the set of $n \times n$ matrices $\mathcal{S}(G)$ that exhibit this pattern.
Two narrower questions about the possible spectra of matrices in $\mathcal{S}(G)$ yield important matrix-theoretic graph parameters that motivate what is studied here. Each of these parameters has a natural combinatorial bound.
Firstly, the highest possible nullity, denoted by $\M(G)$, is also equal to the maximum multiplicity that can be obtained by any eigenvalue, and this is bounded by a process called zero forcing, giving $\M(G) \le \Z(G)$, where $\Z(G)$ is the zero forcing number of $G$. (See, for example, \cite{HLS2022} for the definition of zero forcing number.)
Secondly, the lowest possible number of distinct eigenvalues is denoted by $q(G)$, and this is bounded by the number of vertices in a unique shortest path, which we denote by $\usp{G}$, thus giving $q(G) \ge \usp{G}$ \cite[Theorem 3.2]{MinimumDistinct}. (The parameter $q(G)$ has been studied, for example, in \cite{F10,MinimumDistinct,Analysis,Nordhaus}.)

{\color{black}Note that these combinatorial bounds are in opposite directions, and that they bound matrix parameters that also tend in opposite directions: Zero forcing sets provide an \emph{upper} bound on a matrix parameter that
\begin{itemize}
    \item tends to increase with more eigenvalue coincidences, and that
    \item tends to increase in denser graphs,
\end{itemize}
whereas unique shortest paths provide a \emph{lower} bound on a matrix parameter that
\begin{itemize}
    \item tends to decrease with more eigenvalue coincidences, and that
    \item tends to decrease in denser graphs.
\end{itemize}

One way to harmonize the directions of these inequalities would be to
define $\mathrm{mr}(G) = n - \M(G)$ and $\mathrm{tri}(G) = n - \Z(G)$
(these are in fact existing graph parameters, called respectively the \emph{minimum rank} of $G$ and the \emph{triangle number} of $G$), and to express the first inequality as $\mathrm{mr}(G) \ge \mathrm{tri}(G)$, which now runs in the same direction as the second inequality.
The complemented inequality gives a completely equivalent statement---but not, as it turns out, an equally useful statement,
at least not within the context of minor monotonicity.
Once we start taking minors, including operations such as contraction that change the number of vertices, it is $\M(G)$ rather than $\mathrm{mr}(G)$ that has a natural minor-monotone variant, and it is $\Z(G)$ rather than $\mathrm{tri}(G)$ that has a natural minor-monotone variant.
This suggests that it may be preferable to harmonize directions not by complementing the first inequality with respect to $n$, but rather by complementing the parameters in the second inequality.
For these parameters, unlike for $\M(G)$ and $\Z(G)$, the complements do not have existing names, and rather than introducing a profusion of names, we adopt a notational convenience
that abuses a common notation for graph complementation.
For a graph $G$, we follow the convention that $\overline{G}$ denotes the complement of $G$, whose edges are precisely the non-edges of $G$.
The same notation applied to the name of a graph parameter, rather than to a graph,
will however denote numerical complementation with respect to $n$.
This gives, for example, $\M(G) = \overline{\mathrm{mr}}(G)$ and
$\Z(G) = \overline{\mathrm{tri}}(G)$.
(The only purpose in mentioning the parameters $\mathrm{mr}(G)$ and $\mathrm{tri}(G)$ was
to provide an illustration of numerical complementation; having served that purpose, they will be of no further use to us.)
In this notation, the complemented second inequality becomes
\[
\overline{q}(G) \le \uspc{G},
\]
now expressed in terms of parameters that one might hope to have natural minor-monotone variants.

The graph parameter $q(G)$ is the maximum, over $A \in \mathcal{S}(G)$, of an identically named matrix parameter $q(A)$ that counts the number of distinct eigenvalues.
The numerically complemented matrix parameter $\overline{q}(A)$ has a straightforward interpretation:
Given the ordered spectrum
\[
\lambda_1 \le \lambda_2 \le \dots \le \lambda_n
\]
of $A$,
$\overline{q}(A)$ is precisely the number of eigenvalue coincidences, the number of times
that equality rather than strict inequality holds.
Whereas $\M(A)$ and $\M(G)$ only count ($1$ more than the number of) eigenvalue coincidences concentrated at a single eigenvalue, $\overline{q}(A)$ and $\overline{q}(G)$ count all coincidences, which might be taken as a broader measure of how tightly constrained the spectrum can be.
To give a simple example, both the claw graph $K_{1, 3}$ and the $4$-cycle $C_4$ achieve
maximum multiplicity
$\M(G)=2$, but a tree must always have simple eigenvalues at the extremities, and so
for the tree we have only $\overline{q}(K_{1, 3})=1$, but
for the cycle we can achieve $\overline{q}(C_4) = 2$.
The number $\overline{q}(G)$ will be called the \emph{maximum spectral equality} of the graph.
The graph parameter $\uspc{G}$ will also be given a combinatorial interpretation as
the \emph{spectator number} of a graph.
}

The motivation for this project concerns the behavior of the four graph parameters $\M(G)$, $\Z(G)$,
{\color{black} $\overline{q}(G)$, and $\uspc{G}$}
with respect to subgraphs, and more generally with respect to graph minors.
{\color{black} Generally speaking, graphs that are smaller in the minor ordering impose more constraints on matrix entries, and allow less flexibility in trying to produce a constrained spectrum.}
The set $\mathcal{S}(G)$ is a manifold, an open subset of a vector space, and the process of removing an edge from the graph, forcing a zero entry in the matrix, collapses a coordinate direction of the enveloping vector space, reducing the dimension of $\mathcal{S}(G)$ by one.
One might expect, generically, that removing degrees of freedom in the matrix would not allow greater nullity and would also not allow more eigenvalue coincidences.
In other words, one might expect both $\M(G)$ and $n - q(G)$ to
be weakly decreasing as edges are deleted, and might expect a similar subgraph monotonicity
for the related combinatorial bounds $\Z(G)$ and $n - \usp{G}$.
%
In most cases this expected behavior is observed to hold, but there are counterexamples---an extreme example being that the graph with the fewest edges of all on $n$ vertices, the edgeless graph $\overline{K_n} = nK_1$, is the only graph that achieves a value as high as $n$ for either $\M(G)$ or $\Z(G)$ and is the only graph that achieves a value as high as $n - 1$ for either $n - q(G)$ or $n - \usp{G}$.
(This extreme example is also maximally disconnected, but there also exist, for any of these four parameters, connected counterexample graphs to which an edge can be added while decreasing the value of the parameter. See, e.g., Example 2.9 in \cite{Parameters} for an example for $\M(G)$ and $\Z(G)$, and see, e.g., Figures 6.1 and 6.2 of \cite{MinimumDistinct} for an example for $n-q(G)$ and $n-\usp{G}$.) 

For three of these four parameters, namely $\M(G)$, $\Z(G)$, and $n - q(G)$, there is an established variant of the graph parameter that not only exhibits the generically expected behavior of weakly decreasing as edges are deleted, but also is monotone with respect to graph minors more generally.
The purpose of this paper is to complete the set of four by introducing, and commencing the study of, a canonically chosen modification of $n - \usp{G}$ that achieves monotonicity with respect to graph minors. Thus, we study the minor-monotone floor of $n - \usp{G}=\uspc{G}$, that is, the minimum of $\uspc{H}$ among all graphs $H$ containing $G$ as a minor. We denote this minor-monotone floor by $\uspcf{G}$ and call it the \emph{spectator floor} of $G$.
{\color{black}
(This perpetuates an existing abuse of the standard notation for integer floor, as distinguished by context when it is applied to the name of a graph parameter---typically one already taking integer values---rather than to a real number.)
}

One of our main results is that, in order to find a graph $G'$ containing $G$ as a minor and such that $\uspcf{G}=\uspc{G'}$, one need only add edges to $G$. In \cref{Minor Operations and the Spectator Floor}, we will prove the following.

\begin{theorem}
\label{lem:no decontract-intro}
For every graph $G$, there is a graph $G'$ with the same number of vertices as $G$ such that $G$ is a subgraph of $G'$ and such that $\uspcf{G}=\uspc{G'}$.
\end{theorem}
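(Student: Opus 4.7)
The plan is to proceed by induction on $|V(H)|-|V(G)|$, where $H$ is a graph satisfying $G\preceq H$ and $\uspc{H}=\uspcf{G}$. The base case $|V(H)|=|V(G)|$ is immediate: with no extra vertices, the minor operations taking $H$ down to $G$ must be edge deletions only (contractions and vertex deletions strictly decrease the vertex count), so $G$ appears as a subgraph of $H$ up to isomorphism and we may take $G'=H$.

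For the inductive step, suppose $|V(H)|>|V(G)|$, and fix a witnessing system of disjoint connected branch sets $\{V_i\}_{i\in V(G)}\subseteq V(H)$ for $G\preceq H$. I aim to produce $H^{\star}$ with $G\preceq H^{\star}$, $|V(H^{\star})|<|V(H)|$, and $\uspc{H^{\star}}\le\uspc{H}$; the induction hypothesis applied to $H^{\star}$ then furnishes the desired $G'$. The easier sub-case is when some vertex $v\in V(H)$ lies outside every branch set: then $H^{\star}=H\setminus v$ still has $G$ as a minor, and since any unique shortest path of $H$ that avoids $v$ remains a unique shortest path in $H\setminus v$ (deleting a vertex off the path can only increase distances among remaining pairs, and any parallel shortest path in $H\setminus v$ would already exist in $H$), one obtains $\uspc{H^{\star}}\le\uspc{H}-1$ provided a longest unique shortest path avoiding $v$ exists; I would ensure this by additionally choosing $H$ to minimize $|V(H)|$ among realizers of $\uspcf{G}$.

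The main obstacle is the remaining sub-case, where $V(H)=\bigcup_i V_i$ but some branch set $V_i$ has $|V_i|\ge 2$. Here the natural move is to contract an edge $e$ of $H[V_i]$, but in general $\uspc{H/e}$ can strictly exceed $\uspc{H}$: a lateral edge adjacent to the longest unique shortest path may, upon contraction, create a second shortest path of equal length and destroy uniqueness. To circumvent this, my plan is to choose $H$ to be \emph{edge-maximal} among graphs realizing $\uspcf{G}$ with the prescribed branch-set witness, and to establish the structural lemma: in such an edge-maximal $H$, some branch set contains an edge whose contraction satisfies $\uspc{H/e}\le\uspc{H}$. The heart of this lemma is an exchange argument on the family of longest unique shortest paths of $H$, leveraging edge-maximality to rule out the pathological configurations in which every branch-set contraction introduces a duplicate shortest path. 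This structural lemma is where I expect the bulk of the technical effort to lie, and once it is in place the induction closes and returns a supergraph of $G$ on $V(G)$ as required.
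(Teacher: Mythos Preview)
Your induction framework and base case are fine, and the reduction to branch sets covering $V(H)$ is the right first move—though your handling of the ``vertex outside every branch set'' sub-case is muddled. Observe that if such a $v$ existed and some parade avoided it, your own computation gives $\uspc{H\setminus v}\le\uspc{H}-1<\uspcf{G}$, contradicting $G\preceq H\setminus v$. The clean fix is not vertex-minimality but simply to absorb $v$ into a neighbouring branch set (possible since $H$ has no isolated vertices once $G$ has none); then this sub-case never arises. The paper does exactly this via its Lemma on writing $G\cong H/C\backslash D$.

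The genuine gap is your ``structural lemma.'' You assert that in an edge-maximal $H$ some branch-set edge $e$ satisfies $\uspc{H/e}\le\uspc{H}$, but you give no argument, and it is not clear edge-maximality buys this. The obstruction you correctly identify—that contracting $e$ can manufacture a second shortest path of the same length between the parade's endpoints and destroy uniqueness—is not ruled out by edge-maximality: the repairing edge one would want to have present in $H$ would itself shorten the parade and hence need not be there in an edge-maximal realiser.

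The paper sidesteps this entirely with a different mechanism: it does \emph{not} look for an edge whose contraction is harmless. Instead it contracts an arbitrary edge $e\in C$, accepts that $\uspc{H/e}$ may jump, and then performs a detailed analysis of all the new equal-length competitor paths $P_1,\dots,P_t$ that appear in $H/e$. From their structure (each $P_i$ diverges from the parade $P$ along a single arc through the contracted vertex, and these arcs are tightly aligned) the paper extracts two parade vertices $v_m,v_{m+2}$ at distance $2$ and shows that adding the chord $f=v_mv_{m+2}$ to $H/e$ produces a graph $F$ in which the shortcut $P_F$ is again a \emph{unique} shortest path, yielding $\uspc{F}\le\uspc{H}$. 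Since $G$ is still a minor of $F$ and $F$ requires one fewer contraction than $H$, this contradicts minimality of $|C|$. In short: the key idea is ``contract, then repair with a single new edge,'' not ``find a safe contraction,'' and the technical work lies in proving that one chord suffices to restore uniqueness.
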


Another main result is that the spectator floor is additive over connected components. We prove the following result in \cref{Disconnected Graphs}.

\begin{theorem}
\label{prop:components-intro}
For any {\color{black}disjoint union of graphs} $G=G_1\sqcup G_2$,
\[\uspcf{G}=\uspcf{G_1}+\uspcf{G_2}.\]
\end{theorem}

\cref{trees} gives some results about the spectator floor of trees. In \cref{sec:minimal}, we study graphs with small spectator floors. In particular, we determine the minor minimal graphs (whether or not parallel edges are allowed) that have spectator floor $k$, when $k=1$ and when $k=2$. In \cref{minor max graphs}, we characterize the minor maximal graphs with a given spectator floor, subject to a restriction on the number of vertices in the graph and the number of parallel edges between any pair of vertices. Finally, \cref{further questions} presents some questions for further research. Before moving on to \cref{Minor Operations and the Spectator Floor}, we continue this introduction with some additional preliminary information.

%

\subsection{Definitions}
\label{definitions}

We allow a graph $G$ to have multiple edges (unless $G$ is explicitly stated
to be a simple graph) but not to have loops at its vertices. Moreover, we assume all graphs are nonempty, that is every graph has at least one vertex.
The multigraph convention for matrix entries is that each
edge of $G$ contributes additively a non-zero amount to the matrix entry,
which in particular allows the contributions from multiple edges to cancel.
The convention is also that diagonal entries are unconstrained.
Concretely, then, given a graph whose vertices are the index set
$\{1, \dots, n\}$,
the space $\mathcal{S}(G)$ of matrices conforming
to the pattern of $G$ is the set of all real symmetric $n \times n$
matrices $A = [a_{ij}]$ such that
\begin{itemize}
    \item if $i \ne j$ and there is no edge in $G$ connecting $i$ to $j$,
    then $a_{ij} = 0$,
    \item if $i \ne j$ and there is exactly one edge in $G$ connecting $i$
    to $j$, then $a_{ij} \ne 0$,
    \item if $i \ne j$ and there is more than one edge in $G$ connecting $i$
    to $j$, then $a_{ij}$ is unconstrained, and
    \item diagonal entries $a_{ii}$ are unconstrained.
\end{itemize}

\color{black}
A \emph{minor} of a graph $H$ is obtained from $H$ by a sequence of the following operations:

\begin{enumerate}
    \item deletion of an isolated vertex,
    \item deletion of an edge, denoted $H\backslash e$, or
    \item contraction of an edge $e$ with no edges in parallel with it, denoted $H/e$. (If the minor is to be a simple graph, then the edge cannot be in a triangle.)
\end{enumerate}
A minor obtained by performing exactly one of these operations is called an \emph{elementary minor}.

\color{black}

The following definitions lead up to the promised naming and explanation of $n - \usp{G}$,
together with its minor-monotone floor.
Let $G$ be a graph on $n$ vertices. We start with some standard definitions.
\begin{itemize}
    \item A \emph{walk} in $G$ from $u$ to $v$ is a sequence of
    edges $(e_1, \dots, e_k)$ from $G$ and a sequence of vertices
    $(u=v_1, v_2, \dots, v_{k}, v_{k+1}=v)$ from $G$ such that each edge $e_i$ has endpoints $v_i$ and $v_{i+1}$.
    The \emph{length} of the walk is the length $k \ge 0$ of the sequence of edges.
    \item A \emph{path} in $G$ is a walk all of whose vertices are distinct.
    The length of a path is the number of edges $k$, but the \emph{order} of a path is the number of vertices $k + 1$.
    \item A \emph{shortest path} in $G$ is a path in $G$ from $u$ to $v$ of order $k + 1$
    such that no path in $G$ from $u$ to $v$ has order $k$ or smaller.
    \item A \emph{unique shortest path} in $G$ is a shortest path $P$ in $G$ from $u$ to $v$ of order $k + 1$ such that every path in $G$ from $u$ to $v$ of order exactly $k + 1$ is identical to $P$.
    The graph $G$ may be a multigraph, but no edge in a unique shortest path can have other edges parallel to it,
    because two paths with a different sequence of edges are not considered identical, even if the sequence of vertices is the same.
\end{itemize}

\textcolor{black}{As is often done in graph theory, we will use the terms \emph{walk} and \emph{path} both as defined above (as a sequence of vertices and edges) but also as the graph or subgraph with those vertices and edges.}

We now introduce the following definitions.

\begin{itemize}
    \item A \emph{parade} in $G$ is a unique shortest path in $G$ that achieves the largest possible order
    for a unique shortest path in $G$.
    \item The \emph{parade number} of a graph $G$, denoted $\usp{G}$, is the number of vertices in some parade in $G$.
    \item The \emph{spectator number} of a graph $G$, denoted $\uspc{G}$, is the number of vertices outside some parade in $G$, hence $\uspc{G}=n-\usp{G}$.
    \item The \emph{spectator floor} of a graph $G$, denoted $\uspcf{G}$, is the minor-monotone floor of $\uspc{G}$; in other words, the minimum value of $\uspc{H}$ over the set of all graphs $H$ of which $G$ is a minor.
\end{itemize}

\subsection{Matrix-theoretic graph parameters related to \texorpdfstring{$q(G)$}{q(G)}}


The graph parameters $q_M(G)$ and $q_S(G)$, introduced in \cite{SSP2017}, satisfy $q(G) \le q_M(G) \le q_S(G)$. For these variants of $q(G)$, matrices are restricted to those satisfying the Strong Multiplicity Property (SMP) or the Strong Spectral Property (SSP), respectively.
A consequence of \cite{SSP2020} is that the maximum SMP spectral equality, $n - q_M(G)$,
and maximum SSP spectral equality, $n - q_S(G)$,
are minor-monotone graph parameters.
A consequence of \cite{SSP2017} is that $n - q_M(G)$ and $n - q_S(G)$ each take the sum
over components of a disconnected graph, in contrast for example
to minor-monotone matrix nullity graph parameters that tend to take
the maximum over components.
Since $q(G) \le q_M(G) \le q_S(G)$, we also have the inequalities
\[
n - q_S(G) \le n - q_M(G) \le n - q(G).
\]

The existence of a minor-monotone lower bound
invites inquiry into the minor-monotone floor of the maximum
spectral equality, which by general properties of minor-monotone
floors and ceilings shares the same lower bound:
\[
n - q_M(G) \le \lfloor n - q(G) \rfloor \le n - q(G).
\] {\color{black}(As with the notation for the spectator floor, we use $\lfloor n - q(G) \rfloor$ to denote the minor-monotone floor of $n - q(G)$, which is the minimum of $n - q(H)$ among all graphs $H$ containing $G$ as a minor.)}

\medskip
\noindent
{\bf Combinatorial bounds.}
Given a graph $G$ on $n$ vertices, if the vertices $u$ and $v$
are connected by a unique shortest path on $k$ vertices, then it is straightforward
to show, for any matrix $A \in \mathcal{S}(G)$, that the
powers $I = A^0, A^1, A^2, \dots, A^{k-1}$ form a linearly independent set in the
vector space of symmetric $n \times n$ matrices, implying $q(G) > k - 1$
since the linear combination of powers in the minimal polynomial
of a matrix is zero.
This yields the result that
$q(G) \ge k$, where $k$
is the number of vertices in any unique shortest path.

This bound suggests the definition of the \emph{spectator number} given above. In particular, the spectator number is the minimum cardinality of a vertex set that is complementary, relative to the set of all vertices of $G$, to the set of vertices in a unique shortest path in $G$.
The above inequality $q(G) \ge k$,
satisfied whenever there exist $k$ vertices
forming a unique shortest path, guarantees that the spectator number is an upper bound for the maximum spectral equality $n - q(G)$,
just as the combinatorial parameter $\Z(G)$ is an upper bound for
the matrix parameter $\M(G)$.
Maximum nullity gives a bound on maximum spectral equality,
\[
\M(G) - 1 \le n - q(G),
\]
because any one eigenvalue of multiplicity $k$
induces $k - 1$ eigenvalue equalities on its own.
In a parallel way, but for entirely different and combinatorial reasons,
the quantity $\Z(G) - 1$ is a lower bound for
the spectator number. \textcolor{black}{Equivalently the spectator number plus one is an upper bound for $\Z(G)$,
because the complement of the vertices in a unique shortest path, together with one endpoint of that path, form a zero forcing set for which a zero forcing sequence exists with only one non-trivial zero forcing chain.}
Unlike zero forcing, whose computation is in general NP-hard \cite{Hardness}, the minimum
spectator number can be computed in polynomial time for any graph (see, e.g., \cref{USP-in-poly-time}).

\medskip
\noindent
{\bf Bounds for minor-monotone floors.}
The minor-monotone floor of a graph parameter
on a graph $G$ is the minimum of the parameter
over the infinite collection of graphs $H$
of which $G$ is a minor.


When a parameter $\beta(G)$ is minor-monotone, it is known that for any fixed $k$ the class of graphs with $\beta(G) \ge k$ can be recognized in polynomial time, and in fact can be recognized in linear time in the special case that the complete list of minor-minimal graphs for $\beta(G) \ge k$ is known and all of them happen to be planar graphs \cite{B93}. In \cref{sec:minimal}, we show that the minor-minimal graphs with spectator floor $1$ and $2$ are all planar.


The parameter $\uspcf{G}$ extends in a natural way to multigraphs,
signed graphs up to negation, and signed multigraphs up to negation,
all of which are categories in which minor-minimal sets
are guaranteed to be finite.
The signed variant is in terms of \emph{monotone
shortest paths}; a path of length $k$ from vertex $u$ to vertex $v$ in a signed graph $G$ is a monotone shortest path
if there is no path from $u$ to $v$ of length shorter than $k$,
and if every other path from $u$ to $v$ of length $k$
has the same product of edge signs.

\section{Minor Operations and the Spectator Floor}
\label{Minor Operations and the Spectator Floor}

When calculating the spectator floor, we take the minimum of the spectator number over any graph that can be made by performing the three minor operations above \textcolor{black}{(in Section \ref{definitions})} in reverse.  The following terminology will be useful to describe the operation that reverses contraction.

\begin{definition}\label{def:decontraction}
If $G$ and $H$ are graphs and $e\in E(H)$ such that $G=H/e$, then we call the operation used to obtain $H$ from $G$ a \emph{decontraction} of a vertex.
\end{definition}

Note that there may be many ways to decontract a vertex. Therefore, decontraction is not well-defined without additional context.

The main result of this section is \cref{lem:no decontract}, which tells us that in order to calculate the spectator floor, it is not necessary to add vertices or to perform decontraction.  The only necessary operation is adding edges.  The following lemmas, \cref{lem:no-delete-vertex,lem:isolated,lem:no-isolated,lem:still-usp,lem:no-endpoints,lem:e not in pert}, are all in support of the main result.

The lemma below tells us that when taking a minor, it is not necessary to perform step 1 (deletion of an isolated vertex), and that we may order the steps so that all instances of step 3 (contraction) appear before all instances of step 2 (edge deletion). \textcolor{black}{This lemma is a standard fact. For example, it follows from the information given in \cite[Section 1.7]{Diestel2016}.}

\begin{lemma}
\label{lem:no-delete-vertex}
Let $G$ and $H$ be graphs such that $G$ is a minor of $H$. If $H$ has no isolated vertices, then there are sets of edges $C$ and $D$ such that $G\cong H/C\backslash D$.
\end{lemma}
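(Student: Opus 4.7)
My plan is to prove the lemma by induction on the length $k$ of a sequence of elementary minor operations $H = H_0 \to H_1 \to \cdots \to H_k \cong G$ that realizes $G$ as a minor of $H$. The base case $k=0$ is trivial with $C = D = \emptyset$. For the inductive step I would handle two intertwined tasks: eliminating every instance of operation~1 (isolated vertex deletion) from the sequence, and then reordering the remaining steps so that all contractions precede all edge deletions. The reordering part is routine and relies on the commutativity $(H_i \backslash e)/f = (H_i/f)\backslash e$ for distinct edges $e \ne f$; a bubble-sort argument pushes contractions to the front, and the validity condition for contraction (no parallel edges) is preserved because deletion of $e$ does not affect whether $f$ has parallels, unless $e$ itself is parallel to $f$, a case which would already have prevented the contraction of $f$ in the original order.

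The substantive part is the elimination of operation~1. Suppose the sequence contains a step that deletes an isolated vertex $v$ from some intermediate graph $H_{i-1}$. Since $H$ has no isolated vertices, either $v \in V(H)$ originally had positive degree, or $v$ was produced by a prior contraction. In the former case, every edge of $H$ incident to $v$ has been deleted by step $i$, so I would locate the last such deletion step $H_{j-1} \to H_j = H_{j-1}\backslash e$, with $e = vw$. At that moment $e$ must have no parallel edges in $H_{j-1}$, since otherwise $v$ would still be incident to a parallel edge after deletion and could not later be isolated. Hence the contraction $H_{j-1}/e$ is a valid operation, and I would replace the edge deletion by this contraction, merging $v$ into its neighbor $w$. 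Because $v$ was destined to be deleted as an isolated vertex anyway, the merged vertex simply plays $w$'s role in every subsequent operation, producing the same final graph while eliminating one use of operation~1.

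The main obstacle is the remaining case, in which $v$ is created directly by a contraction whose two endpoints share no other neighbors at that moment, so that $v$ is born isolated. My plan here is to regress further: each endpoint of the offending contraction was either an original low-degree vertex of $H$ whose other incident edges were subsequently deleted---each such deletion providing a fresh candidate for the substitution trick above---or was itself a contraction product of the same kind, yielding an inductive regress that must terminate at edges of $H$ whose endpoints had positive degree in $H$. The delicate bookkeeping lies in confirming that none of the accumulated substitutions ever invalidates a downstream contraction, but this invariant holds because each substitution occurs at a moment when the replaced edge has no parallels and because absorbing a soon-to-vanish vertex into its neighbor introduces no new parallels among the surviving edges. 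I expect this regress argument, together with the interplay between isolated-vertex bookkeeping and contraction-validity, to be the main technical hurdle.
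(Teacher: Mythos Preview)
Your substitution idea---replace the last edge deletion incident to a doomed vertex by a contraction---is exactly what the paper's (much terser) proof does, and your treatment of the case where the isolated vertex is an original vertex of $H$ is sound. The problems lie in the two places you yourself flag as delicate.

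Your bubble-sort claim is backwards: you say that if $e$ is parallel to $f$ then contracting $f$ ``would already have been prevented in the original order,'' but in the original order one deletes $e$ \emph{first}, which removes $f$'s only parallel and makes the subsequent contraction of $f$ legal; it is after the swap that $f$ still has the parallel $e$ and cannot be contracted. (A triangle already shows the difficulty: ``delete $ac$, contract $ab$, contract $bc$'' reaches $K_1$, yet no order putting both contractions first is valid, since contracting one edge of a triangle makes the remaining two parallel.) More seriously, your regress need not terminate. You assume each endpoint of the offending contraction is either a contraction product or an original vertex of $H$ ``whose other incident edges were subsequently deleted,'' but an endpoint can be an original degree-$1$ vertex of $H$ whose \emph{only} edge is the one being contracted---then there is nothing to substitute. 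Take $H = K_2 \sqcup K_2$ (no isolated vertices) and $G = K_2$: one $K_2$ component must be removed entirely, contraction shrinks it to a single isolated vertex with no incident edge left to convert, and operation~1 is unavoidable. Your inductive scheme never rules this configuration out, and indeed cannot: for this $H$ and $G$ there are no edge sets $C,D$ with $H/C\backslash D \cong G$. The missing ingredient is a guarantee that every connected component of $H$ contributes at least one vertex to the branch-set model of $G$; without it the argument (and the statement as written) fails.
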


The next lemma tells us that the presence of isolated vertices does not affect the spectator floor value.

\begin{lemma}
\label{lem:isolated}
Let $G+v$ be a graph with an isolated vertex $v$, 
and let $G$ be the graph obtained from $G+v$ by deleting $v$. Then $\uspcf{G+v}=\uspcf{G}$.

Moreover, if $G$ is a minor of a graph $H$ and $\uspcf{G}=\uspc{H}$, then there is a graph $H^+$, with exactly one more vertex than $H$, such that $\uspcf{G+v}=\uspc{H^+}$ and $G+v$ is a minor of $H^+$.
\end{lemma}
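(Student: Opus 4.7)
The plan is to establish $\uspcf{G+v} \ge \uspcf{G}$ and $\uspcf{G+v} \le \uspcf{G}$ as two separate inequalities, arranging the second argument so that it simultaneously produces the graph $H^+$ required by the ``moreover'' clause.

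For the first inequality, note that $G$ is itself a minor of $G+v$ (simply delete the isolated vertex $v$). Consequently, every graph $H$ having $G+v$ as a minor also has $G$ as a minor, so the infimum defining $\uspcf{G}$ is taken over a larger collection of graphs than that defining $\uspcf{G+v}$. This immediately gives $\uspcf{G+v} \ge \uspcf{G}$.

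For the reverse inequality, I take any graph $H$ with $G$ as a minor that attains $\uspc{H} = \uspcf{G}$, fix a parade $P$ in $H$ of order $k = \usp{H}$, and let $u$ be an endpoint of $P$. I form $H^+$ from $H$ by adding one new vertex $v^*$ joined to $u$ by a single edge $e$. Then $G+v$ is a minor of $H^+$: deleting $e$ first yields $H$ together with the isolated vertex $v^*$, and applying the original minor sequence from $H$ to $G$ (leaving $v^*$ alone) reduces $H^+\backslash e$ to $G$ with an isolated vertex. I then show that $H^+$ admits a unique shortest path of order $k+1$ obtained by prepending $v^*$ to $P$. Because $v^*$ has degree $1$ in $H^+$, any path from $v^*$ to the far endpoint $w$ of $P$ must begin with the edge $e$ and then continue as a path from $u$ to $w$ in $H$; the parade property of $P$ guarantees that such a continuation has length at least $k-1$, with equality holding only for $P$ itself. (In the degenerate case where $P$ has order $1$, the edge $v^*u$ is itself the required unique shortest path of order $2$.) Hence $\usp{H^+} \ge k+1$ and $\uspc{H^+} \le (|V(H)|+1) - (k+1) = \uspc{H} = \uspcf{G}$, giving $\uspcf{G+v} \le \uspc{H^+} \le \uspcf{G}$.

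Combining the two inequalities yields $\uspcf{G+v} = \uspcf{G}$, and the chain $\uspcf{G+v} \le \uspc{H^+} \le \uspcf{G+v}$ then forces $\uspc{H^+} = \uspcf{G+v}$, which establishes the ``moreover'' clause for exactly the $H^+$ just constructed. The main obstacle is the claim that attaching a pendant to an endpoint of a parade extends the parade by exactly one vertex; this rests on the structural fact that a degree-$1$ vertex forces every path from it to begin with its unique incident edge, so uniqueness of shortest paths ending at that pendant's neighbor transfers faithfully into uniqueness of shortest paths one step longer in $H^+$.
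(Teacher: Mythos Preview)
Your argument is correct and follows essentially the same route as the paper: first use that $G$ is a minor of $G+v$ to get $\uspcf{G}\le\uspcf{G+v}$, then attach a pendant vertex to an endpoint of a parade in a realizing graph $H$ to obtain $H^+$ and conclude. The only difference is one of detail: the paper simply asserts that the extended path $P^+$ witnesses $\uspc{H^+}\le\uspcf{G}$, whereas you actually justify (via the degree-$1$ observation) that $P^+$ is a unique shortest path in $H^+$, and you also spell out the degenerate case $|P|=1$.
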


\begin{proof}
Since $G$ is a minor of $G+v$, we have $\uspcf{G}\leq\uspcf{G+v}$.

Let $P$ be a parade in $H$. Let $H^+$ be obtained from $H$ by adding a vertex $v$ of degree $1$ 
adjacent to one of the endpoints of $P$. Let $P^+$ be the path in $H^+$ obtained from $P$ by adding the vertex $v$. Since $P$ is a parade in $H$, it has $|V(H)|-\uspcf{G}$ vertices. Therefore, $P^+$ has $|V(H^+)|-\uspcf{G}$ vertices. Thus, $\uspc{H^+}\leq\uspcf{G}$. But $G$ is a minor of $H^+$. Therefore, the reverse inequality holds, and we have $\uspc{H^+}=\uspcf{G}$.

Since $G+v$ is a minor of $H^+$, we have $\uspcf{G+v}\leq\uspc{H^+}=\uspcf{G}\leq\uspcf{G+v}$. Therefore, we have equality, and the result holds.
\end{proof}

The next lemma tells us that if $H$ is a graph that realizes the spectator floor value of a graph $G$ with no isolated vertices, then $H$ also has no isolated vertices.

\begin{lemma}
\label{lem:no-isolated}
Suppose $G$ is a graph without isolated vetices, and suppose $G$ is a minor of $H$. If $\uspcf{G}=\uspc{H}$, then $H$ has no isolated vertices.
\end{lemma}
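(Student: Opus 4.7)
The plan is to argue by contradiction: assume $H$ has an isolated vertex $v$, and exhibit a smaller graph $H' \subset H$ of which $G$ is still a minor, with $\uspc{H'} < \uspc{H}$.

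First I would let $H' = H - v$ and verify the key numeric identity $\uspc{H} = \uspc{H'} + 1$. This reduces to checking that $\usp{H} = \usp{H'}$. An isolated vertex cannot lie on any path, so every unique shortest path of $H$ is a unique shortest path of $H'$, and conversely adding an isolated vertex creates no new paths and destroys no existing ones. Hence $\usp{H} = \usp{H'}$, and since $|V(H)| = |V(H')| + 1$, the identity follows.

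Next I would show that $G$ is a minor of $H'$. Fix a sequence of elementary minor operations transforming $H$ into $G$. Because $v$ is isolated in $H$ and has no incident edges, no contraction operation in the sequence can involve $v$, and no edge-deletion operation can affect $v$. Therefore $v$ remains an isolated vertex throughout the entire process until (and unless) it is deleted as an isolated vertex. If $v$ were never deleted, then $v$ would be an isolated vertex of $G$, contradicting the hypothesis that $G$ has no isolated vertices. Hence the deletion of $v$ occurs somewhere in the sequence, and since $v$ is already isolated in $H$, we may reorder the sequence to perform this deletion first. The remainder of the sequence then witnesses $G$ as a minor of $H'$.

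Combining the two steps gives $\uspcf{G} \leq \uspc{H'} = \uspc{H} - 1 < \uspc{H} = \uspcf{G}$, a contradiction. I do not expect any serious obstacle here; the only mildly subtle point is the reordering argument in the second step, which relies on the fact (already invoked implicitly in \cref{lem:no-delete-vertex}) that isolated-vertex deletions commute with edge deletions and with contractions of edges at other vertices.
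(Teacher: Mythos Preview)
Your proposal is correct and follows essentially the same approach as the paper: assume an isolated vertex exists, delete it, observe that the spectator number drops by one while $G$ remains a minor, and derive a contradiction. The paper deletes all isolated vertices at once rather than a single one, and is terser about why $G$ remains a minor, but the argument is the same; one small quibble is that an isolated vertex does lie on the trivial one-vertex path, though this does not affect your conclusion since $\usp{H'} \geq 1$.
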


\begin{proof}
Suppose otherwise for a contradiction. Let $W$ be the set of isolated vertices of $H$. It is not difficult to see that deletion of such a vertex results in a graph with spectator number reduced by $1$. (This is because $G$ is neither empty nor $K_1$. Therefore, $H\neq K_1$.) Therefore, $\uspc{H-W}=\uspc{H}-|W|$. Since no vertex of $G$ is isolated, $G$ is a minor of $H-W$. This contradicts the assumption that $\uspcf{G}=\uspc{H}$.
\end{proof}

The next definition gives us a very precise definition of what it means to contract an edge in a path.  This will be of much use in further proofs.

\begin{definition}
\label{def:P/e}
Let $P$ be a path and $e$ an edge in a graph $G$. We define $P/e$ to be the subgraph of $G/e$ obtained from $P$ by contracting $e$. More precisely, if $v$ and $w$ are the endpoints of $e$ and $v'$ is the vertex that results from contracting $e$, then:
\begin{itemize}
    \item If $P$ contains $e$, then $P/e$ is the path whose vertex set is $(V(P)-\{v,w\})\cup\{v'\}$ and whose edge set is \textcolor{black}{obtained from $E(P)-\{e\}$ by relabelling both $v$ and $w$ as $v'$}.
    \item If $P$ contains exactly one endpoint of $e$ (say $v$), then $P/e$ is \textcolor{black}{a path that is ismorphic to $P$ as a graph. The isomorphism is done by  relabelling $v$ as $v'$.}
    \item If $P$ contains both endpoints of $e$, but not $e$ itself, then $P/e$ is the graph (with exactly one cycle) whose vertex set is $(V(P)-\{v,w\})\cup\{v'\}$ and whose edge set is \textcolor{black}{obtained from $E(P)$ by relabelling both $v$ and $w$ as $v'$}.
    \item If $P$ contains neither endpoint of $e$, then $P/e=P$. 
\end{itemize}
\end{definition}

The next lemma \textcolor{black}{is straightforward and} tells us that the property of being a unique shortest path is preserved after contraction of an edge in the path.

\begin{lemma}
\label{lem:still-usp}
Let $G=H/e$.  If $P$ is a unique shortest path in $H$ and $P$ contains $e$, then $P/e$ is a unique shortest path in $G$.
\end{lemma}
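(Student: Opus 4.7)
The plan is a lifting argument. Write $e = ab$ and let $v^*$ denote the vertex of $G$ resulting from contracting $e$. Let $u, w$ be the endpoints of $P$, and let $u', w'$ be the corresponding endpoints of $P/e$ in $G$ (so $u' = v^*$ if $u \in \{a,b\}$, and $u' = u$ otherwise; analogously for $w'$). I would first dispose of the degenerate case $P = e$: there $P/e$ is the single vertex $v^*$, trivially a unique shortest path of length $0$.

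Assume henceforth $|E(P)| \ge 2$. Given any path $Q$ in $G$ from $u'$ to $w'$, I construct a lifted walk $Q^H$ in $H$ from $u$ to $w$ as follows. The vertex $v^*$ occurs in $Q$ at most once; each edge of $Q$ incident to $v^*$ corresponds under the natural bijection $E(G) \leftrightarrow E(H) \setminus \{e\}$ to a specific edge of $H$ incident to either $a$ or $b$. If the at-most-two edges of $Q$ at $v^*$ demand a single endpoint of $e$, replace $v^*$ by that endpoint, preserving length. If they demand both, insert $e$ between them, increasing length by $1$. In the boundary case where $v^* = u'$ is lifted to the wrong endpoint of $e$ relative to $u$, prepend $e$; treat $w'$ symmetrically. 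In all cases $|Q^H| \le |Q| + 1$, and since $V(Q) \setminus \{v^*\} \subseteq V(H) \setminus \{a,b\}$, no vertex is repeated in the lift, so $Q^H$ is a genuine path from $u$ to $w$ in $H$.

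Shortness of $P/e$ then follows: if $|Q| < |P/e|$, then $|Q^H| \le |Q| + 1 < |E(P)|$, contradicting that $P$ is a shortest path in $H$. For uniqueness, suppose $Q$ is a path in $G$ from $u'$ to $w'$ of length $|P/e|$ with $Q \ne P/e$. Either $|Q^H| = |Q| < |E(P)|$ (again contradicting shortness of $P$) or $|Q^H| = |Q| + 1 = |E(P)|$ with $e$ inserted, so that $E(Q^H) = E(Q) \cup \{e\}$ under the identification above. Since $E(P) = E(P/e) \cup \{e\}$ and $Q \ne P/e$ forces $E(Q) \ne E(P/e)$, we obtain $Q^H \ne P$, yielding a second path in $H$ of the same length as $P$ between the same endpoints, contradicting the uniqueness of $P$.

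The main obstacle is bookkeeping rather than conceptual: one must verify in each configuration (whether $v^*$ is interior to $Q$, is an endpoint of $Q$, or is absent; and whether $u$ or $w$ equals $a$ or $b$) that the lift is genuinely a path of the claimed length and that the edge-set identification tracks correctly. The underlying picture is simple: contraction shortens a containing path by exactly one edge, while any would-be competitor of $P/e$ in $G$ lifts to a walk in $H$ costing at most one extra edge, which is never enough both to tie $P$ in length and to remain distinct from it.
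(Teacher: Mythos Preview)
Your proof is correct. It is essentially the same idea as the paper's argument, but executed from the dual direction: the paper enumerates all paths $Q_i$ from $u$ to $w$ in $H$, projects them to $G$ via $Q_i \mapsto Q_i/e$, asserts that every $u'$--$w'$ path in $G$ arises this way, and observes that each projection loses at most one edge of length; you instead start with a path $Q$ in $G$ and explicitly lift it to a path $Q^H$ in $H$ gaining at most one edge. The underlying content is identical (path length changes by at most one across the contraction), and in fact your lifting construction is precisely what justifies the paper's unproved assertion that every path in $G$ between the given endpoints is $P/e$ or some $Q_i/e$. Your case analysis is more careful than the paper's, which glosses over the situation where an endpoint of $P$ coincides with an endpoint of $e$.
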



The next lemma tells us that if contracting an edge in a parade increases the spectator number of the graph, then the resulting contracted parade is no longer a parade after contraction.

\begin{lemma}
\label{lem:no-endpoints}
Let $G=H/e$.  If $\uspc{H}<\uspc{G}$ and $P$ is a parade in $H$ and $P$ contains edge $e$, then $P/e$ is not a parade in $G$.
\end{lemma}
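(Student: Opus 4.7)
My plan is to prove this by contradiction, combining the preceding \cref{lem:still-usp} with a direct vertex count.

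First I would invoke \cref{lem:still-usp}: since $P$ is a parade (and hence in particular a unique shortest path) in $H$ that contains $e$, the lemma immediately guarantees that $P/e$ is a unique shortest path in $G$. This is the one nontrivial structural input needed; everything else is bookkeeping on the definitions of $\usp{\cdot}$ and $\uspc{\cdot}$.

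Next I would suppose for contradiction that $P/e$ is in fact a parade in $G$. Because $P$ is a parade in $H$ we have $|V(P)|=\usp{H}$, and from \cref{def:P/e} (the case in which the path contains $e$) we get $|V(P/e)|=|V(P)|-1=\usp{H}-1$. If $P/e$ is a parade in $G$, then by definition $\usp{G}=|V(P/e)|=\usp{H}-1$. Since $|V(G)|=|V(H)|-1$, a one-line computation gives
\[
\uspc{G}=|V(G)|-\usp{G}=(|V(H)|-1)-(\usp{H}-1)=|V(H)|-\usp{H}=\uspc{H},
\]
which directly contradicts the hypothesis $\uspc{H}<\uspc{G}$. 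Hence $P/e$ cannot be a parade in $G$.

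There is no real obstacle in this argument; the proof is essentially a sanity check that the spectator number can only drop (weakly) when we contract an edge of a parade, since \cref{lem:still-usp} already supplies a USP in $G$ whose order is only one less than a parade's order in $H$. The only thing to be careful about is to use the vertex-count case of \cref{def:P/e} that applies (the edge $e$ lies in $P$, so exactly one vertex is lost), and to remember that $|V(G)|=|V(H)|-1$ under edge contraction; after that, the conclusion is forced by the equation $\uspc{\cdot}=n-\usp{\cdot}$.
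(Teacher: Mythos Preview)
Your proof is correct and follows essentially the same contradiction-and-vertex-count argument as the paper. One minor remark: the appeal to \cref{lem:still-usp} is superfluous here, since once you assume for contradiction that $P/e$ is a parade in $G$ you already have (more than) the fact that it is a unique shortest path; the paper's proof accordingly omits it.
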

\begin{proof}
Suppose not.  Suppose $\uspc{H}<\uspc{G}$ and $P$ is a parade in $H$ and $P$ contains edge $e$ and $P/e$ is a parade in $G$.  Then
\[\usp{G}=|P/e|=|P|-1=\usp{H}-1.\]

However, since $\uspc{H}<\uspc{G}$, we also have
\[|H|-\usp{H}<|G|-\usp{G}\]
\[|G|+1-\usp{H}<|G|-\usp{G}\]
\[1+\usp{G}<\usp{H}\]
\[\usp{G}<\usp{H}-1\]
\[\usp{G}\leq \usp{H}-2.\]

This is a contradiction.
\end{proof}

The next lemma tells us that contracting an edge contained in a parade cannot increase the spectator number of the graph.

\begin{lemma}
\label{lem:e not in pert}
If $P$ is a parade in $H$ and $P$ contains both of the endpoints of edge $e$, then $\uspc{H}\geq\uspc{H/e}$.
\end{lemma}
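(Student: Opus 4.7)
The plan is to first observe that the hypothesis ``$P$ contains both endpoints of $e$'' forces $e$ itself to be an edge of $P$, and then to reduce to \cref{lem:still-usp} followed by a short vertex count.

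I would begin by supposing for contradiction that $P$ contains both endpoints of $e$ but not $e$ itself. Write the vertices of $P$ in order as $v_0, v_1, \dots, v_k$, and let $a = v_i$ and $b = v_j$ be the endpoints of $e$ with $i < j$. Then there is a genuine walk $P'$ in $H$ from $v_0$ to $v_k$ obtained by following $P$ from $v_0$ to $v_i$, jumping across $e$ to $v_j$, and then following $P$ from $v_j$ to $v_k$; since all internal vertices appear at distinct positions along $P$, this walk is in fact a path. Its length is $i + 1 + (k-j) = k - (j - i - 1) \le k = \mathrm{length}(P)$, with equality exactly when $j - i = 1$ (in which case $e$ is parallel to the edge $v_iv_{i+1}$ in $P$, so $P'$ is a truly distinct path of the same length; note that uniqueness of $P$ includes distinguishing edge sequences). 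In every case this contradicts the fact that $P$, being a parade, is a unique shortest path in $H$. Hence $e \in E(P)$.

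Given that $e$ lies on $P$, \cref{lem:still-usp} guarantees that $P/e$ is a unique shortest path in $H/e$, and by \cref{def:P/e} this path has $|V(P)| - 1 = \usp{H} - 1$ vertices. Consequently
\[
\usp{H/e} \;\ge\; |V(P/e)| \;=\; \usp{H} - 1.
\]

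Finally, I would finish with a one-line vertex count: since $|V(H/e)| = |V(H)| - 1$,
\[
\uspc{H/e} \;=\; |V(H/e)| - \usp{H/e} \;\le\; (|V(H)| - 1) - (\usp{H} - 1) \;=\; \uspc{H},
\]
which is the desired inequality. The only real obstacle is the contradiction argument in the first step: one has to be careful that $P'$ really is a path (not merely a walk) and that, in the degenerate case $j - i = 1$, ``uniqueness'' of the shortest path $P$ is interpreted at the level of edge sequences rather than vertex sequences, as emphasized in the paper's definition of a unique shortest path. Once that is in hand, the remaining steps are immediate applications of earlier lemmas and definitions.
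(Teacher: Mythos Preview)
Your proof is correct, and it is actually cleaner than the paper's. Both arguments begin by noting that $e$ must be an edge of $P$ (the paper dispatches this in one line; your case split is more explicit but reaches the same conclusion). From there the paper proceeds by contradiction: assuming $\uspc{H}<\uspc{H/e}$, it invokes \cref{lem:no-endpoints} to conclude that $P/e$ is not a parade in $G=H/e$, then introduces an auxiliary parade $Q/e$ of $G$, compares lengths to deduce $\usp{G}=\usp{H}$, and finally contradicts the inequality $\usp{G}\le\usp{H}-2$ forced by the assumption. Your route is more direct: once $e\in E(P)$, \cref{lem:still-usp} already gives a unique shortest path $P/e$ in $H/e$ with $\usp{H}-1$ vertices, so $\usp{H/e}\ge\usp{H}-1$, and the desired inequality falls out of the one-line vertex count. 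This bypasses the contradiction, the auxiliary path $Q$, and in fact \cref{lem:no-endpoints} entirely; what the paper's longer argument buys is the slightly stronger intermediate conclusion $\usp{H/e}=\usp{H}$ under the contradictory hypothesis, but that is not needed for the lemma as stated.
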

\begin{proof}
Since $P$ is a parade in $H$, note that $P$ must contain the edge $e$. Otherwise, $P$ is not a unique shortest path.

Suppose for a contradiction that $\uspc{H}<\uspc{H/e}$. Call $H/e$ by the name $G$.  We know from \cref{lem:no-endpoints} that $P/e$ is not a parade in $G$.  But $P/e$ is still a unique shortest path by \cref{lem:still-usp}, so it must not be a longest unique shortest path anymore.

However, there must be some parade in $G$.  Let us call that $Q/e$, so that we can call the uncontracted version in $H$ by the name $Q$.

Supposing $|P|=\ell$ in $H$, then $|Q|\leq \ell$ because $P$ was a parade in $H$.  And since $P/e$ is not long enough to be a parade in $G$, $|Q/e|>\ell-1$.  Putting these together, we get
\[\ell-1<|Q/e|\leq |Q|\leq \ell.\]

This implies that
\[|Q/e|= |Q|= \ell.\]
(That is, $Q/e=Q$, and $Q$ does not contain $e$.)

Hence $\usp{G}=\usp{H}$.  However, $\uspc{H}<\uspc{G}$ implies $\usp{G}\leq \usp{H}-2$.  This is a contradiction.
\end{proof}


Finally, we have the main result of this section, which tells us that for any graph $G$, we can always find a supergraph $G'$ with the same number of vertices that realizes the spectator floor value of $G$.  This is one of the major results of the paper and supports further results in other sections.  We now prove \cref{lem:no decontract-intro} restated below.

\begin{theorem}
\label{lem:no decontract}
For every graph $G$, there is a graph $G'$ with the same number of vertices as $G$ such that $G$ is a subgraph of $G'$ and such that $\uspcf{G}=\uspc{G'}$.
\end{theorem}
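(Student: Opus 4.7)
The plan is to take $H$ with $G$ a minor, $\uspc{H}=\uspcf{G}$, and $|V(H)|$ minimum among such graphs, and show $|V(H)|=|V(G)|$; then $G':=H$ satisfies the conclusion. First I would use \cref{lem:isolated} inductively to reduce to the case where $G$ has no isolated vertices (an isolated vertex of $G$ can be appended to a parade endpoint of the $G'$ produced for the smaller graph). Then \cref{lem:no-isolated} ensures $H$ has no isolated vertices either, and \cref{lem:no-delete-vertex} lets me write $G\cong H/C\setminus D$ for some edge sets $C$ and $D$. The goal becomes showing $C=\emptyset$.

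Suppose for contradiction $C\neq\emptyset$. The core strategy is: if I can find $e\in C$ with $\uspc{H/e}\leq\uspc{H}$, then $H/e$ has $G$ as a minor (contract $C\setminus\{e\}$ and delete $D$) and has one fewer vertex than $H$, and since $G$ being a minor of $H/e$ also forces $\uspc{H/e}\geq \uspcf{G}=\uspc{H}$, equality holds and $H/e$ contradicts minimality of $|V(H)|$. For any parade $P$ of $H$, if some edge $e\in E(P)\cap C$ exists, then both endpoints of $e$ lie in $V(P)$, so \cref{lem:e not in pert} gives $\uspc{H/e}\leq\uspc{H}$ directly and we are done.

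The main obstacle is the case in which no edge of any parade of $H$ lies in $C$. Here my plan is to perturb the minor decomposition. Pick any $e\in C$ (which, by assumption, belongs to no parade), and observe that $\uspc{H\setminus e}\leq\uspc{H}$, since every parade $P$ of $H$ avoids $e$ and therefore persists as a unique shortest path in $H\setminus e$ (deleting a non-$P$ edge creates neither a shorter path nor a second path of the same length between the endpoints of $P$). If the $C$-tree $T_v$ containing $e$ admits a non-$C$ edge $f\in E(H)\setminus C$ within $V(T_v)$ that reconnects the two components of $T_v\setminus e$, I swap $f$ into $C$ in place of $e$ (removing $f$ from $D$); the resulting decomposition realizes the same minor $G$ of $H$, so by iteration we either reach a decomposition in which some $C$-edge lies in a parade (reducing to the easy case) or empty $C$ entirely. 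If no such $f$ exists for any choice of $e$, then each $T_v$ is an induced subtree of $H$, and I would argue using a leaf $\ell$ of $T_v$: its unique $T_v$-neighbor is incident to a $C$-edge that must then be shown to lie in some parade of $H$, contradicting our assumption and completing the proof. Carrying out this structural step---leveraging the parade's interaction with induced $C$-subtrees---is the technical crux of the argument.
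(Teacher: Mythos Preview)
Your reduction to the case $C\neq\emptyset$ with $|V(H)|$ minimal matches the paper's setup (the paper minimizes $|C|$, which is equivalent since $|C|=|V(H)|-|V(G)|$ once isolated vertices are handled). But from that point your plan has a genuine gap.

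Once $|V(H)|$ is minimal, for \emph{every} $e\in C$ one has $\uspc{H/e}>\uspc{H}$: otherwise $H/e$ would be a strictly smaller graph still realizing $\uspcf{G}$. By \cref{lem:e not in pert}, this forces every parade of $H$ to avoid both endpoints of every $e\in C$. Hence your ``easy case'' --- some $C$-edge lying on a parade --- can never occur. Swapping $e$ for a reconnecting edge $f$ inside a $C$-tree does not help: the swap leaves $H$ and $|V(H)|$ unchanged, so the same minimality argument applies to the new set $C'$, and no $C'$-edge can lie on a parade either. The iteration therefore never reaches the easy case, and ``emptying $C$ entirely'' is impossible since $|C|=|V(H)|-|V(G)|>0$ is invariant under swaps. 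Your final fallback (``argue using a leaf $\ell$\ldots\ its $C$-edge must then be shown to lie in some parade'') asks to establish precisely what the minimality hypothesis rules out, and no argument is given.

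The paper resolves the situation differently. It fixes $e\in C$, accepts that $\uspc{H/e}>\uspc{H}$, and analyzes \emph{why} the parade $P$ of $H$ (which survives as a path in $H/e$) ceases to be a unique shortest path there: competing paths $P_1,\dots,P_t$ in $H/e$ of the same length must pass through the contracted vertex $v'$. A structural analysis shows each $P_i$ diverges from $P$ along a single subpath and that the divergence indices are tightly constrained. This pins down enough geometry to choose two parade vertices $v_m,v_{m+2}$ and \emph{add} the edge $f=v_mv_{m+2}$ to $H/e$, producing a graph $F$ on $|V(H)|-1$ vertices with $\uspc{F}\le\uspc{H}$ and $G$ still a minor of $F$, contradicting minimality. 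The key idea you are missing is this construction of a new edge in $H/e$; rearranging the contraction set within $H$ cannot substitute for it.
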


\begin{proof}
By \cref{lem:isolated}, it suffices to consider the case where $G$ has no isolated vertices. Let $H$ be a graph such that $G$ is a minor of $H$ and such that $\uspcf{G}=\uspc{H}$. By \cref{lem:no-isolated}, $H$ has no isolated vertices. Therefore, by \cref{lem:no-delete-vertex}, there are sets $C,D$ of edges such that $G\cong H/C\backslash D$, without the necessity to delete vertices. Now, suppose that $H$ is such that $|C|$ is minimal among all such graphs. Suppose for a contradiction that $C\neq\emptyset$.

For some edge $e\in C$, consider the graph $H/e$. Let $u$ and $v$ be the endpoints of $e$, and let $v'$ be the vertex that results from contracting $e$. By minimality of $C$, we have $\uspc{H}<\uspc{H/e}$. (Otherwise, the set of edges that need to be contracted from $H/e$ to obtain $G$ is smaller than $C$.) Let $P$ be a parade of $H$. By \cref{lem:e not in pert},  $P$ does not contain both endpoints of $e$. Therefore, \textcolor{black}{by Definition \ref{def:P/e}, $P/e$ is a path in $H/e$ with the same length as $P$}. However, since $H/e$ has one fewer vertex than $H$, the fact that $\uspc{H}<\uspc{H/e}$ implies that $\usp{H/e}\leq\usp{H}-2$. This implies that \textcolor{black}{$P/e$} is not a parade in $H/e$.

Since {\color{black}$P$ is a shortest path in $H$, it is clear that $P/e$ is also a shortest path in $H/e$. However, since $P/e$} is longer than the parades of $H/e$, there must be at least one path \textcolor{black}{$Q\neq P/e$} in $H/e$ with the same length and endpoints as \textcolor{black}{$P/e$}. Let $\{P_1,\ldots,P_t\}$ be the set of all such paths $Q$. {\color{black}Note that each $P_i$ is a shortest path in $H/e$.} Since $P$ is a parade in $H$, \textcolor{black}{there is no other path in $H$ with the same endpoints and length as $P$}. Therefore, for each of these paths $P_i$ for $1\leq i\leq t$, there is a path $P_i'$ in $H$, containing $e$, such that $P_i'/e=P_i$. \textcolor{black}{Note that this implies that $v'$ is a vertex on each of these paths $P_i$.}

\begin{claim}
\label{3vertices}
$|V(P)|=\usp{H}\geq3$
\end{claim}

\begin{subproof}
Contracting an edge cannot result in an empty graph. Therefore, $|V(H/e)|\geq1$, implying that $\usp{H/e}\geq1$. Thus, since $\usp{H}\geq\usp{H/e}+2$, we have $\usp{H}\geq3$.
\end{subproof}

We will add an edge $f$ to $H/e$ to result in a graph $F$ such that $\uspc{F}\leq\uspc{H}$, contradicting the minimality of $H$. Let $x=v_0,e_1,v_1,\ldots,e_r,v_r=y$ be the succession of vertices and edges of {\color{black}$P/e$.

\begin{claim}
\label{same_vertex}
Suppose $x=w_0,f_1,w_1,\ldots,f_r,w_r=y$ is the succession of vertices and edges of $P_i$. If $w_j$ is a vertex in $P/e$, then $w_j=v_j$.
\end{claim}

\begin{proof}
    \textcolor{black}{First note that, if $w_j=v_{\alpha}$ and $w_k=v_{\beta}$ are two vertices on both $P/e$ and $P_i$, then $k-j=\beta-\alpha$. Otherwise, the union of $P/e$ and $P_i$ contains a path from $x$ to $y$ that is shorter than both $P/e$ and $P_i$, which cannot be true because $P/e$ and $P_i$ are shortest paths. Since $v_0=w_0$, the claim follows.}
\end{proof}

The previous claim shows that each time $P/e$ and $P_i$ diverge from each other, the resulting pair of internally disjoint paths have the same length. We claim that this only happens once, as explained in Claim \ref{1divergence} below. 

For each integer $i$ with $1\leq i\leq t$, let $m(i)$ be the smallest nonnegative integer such that $v_{m(i)}$ is a vertex of $P_i$ but $e_{m(i)+1}$ is not an edge of $P_i$, and let $n(i)$ be the largest positive integer such that $e_{n(i)}$ is not an edge of $P_i$ but $v_{n(i)}$ is a vertex of $P_i$. (These are both well-defined since $P/e$ and $P_i$ have the same endpoints.)

\begin{claim}
\label{1divergence}

For each integer $i$ with $1\leq i\leq t$, the subpaths of $P_i$ and $P/e=P$ from $v_{m(i)}$ to $v_{n(i)}$ are internally disjoint.
\end{claim}

\begin{subproof}
Suppose for a contradiction that there is a positive integer $p$ such that $m(i)<p<n(i)$ and  such that $P_i$ contains the vertex $v_p$. Recall that $P_i=P'_i/e$, where $P_i'$ is a path in $H$, and $e$ is an edge in $P_i'$. If $v'$ is a vertex in $P/e$, then let $v$ be the endpoint of $e$ that is in $P$, and let $u$ be the other endpoint of $e$. Then, regardless of whether $v'$ is a vertex in $P/e$, the fact that $e$ is not an edge of $P$ implies that $e$ is either on the subpath of $P'_i$ from $v_{m(i)}$ to $v_p$ or on the subpath of $P'_i$ from $v_p$ to $v_{n(i)}$.

We may assume without loss of generality that $e$ is on the subpath of $P'_i$ from $v_{m(i)}$ to $v_p$. (Otherwise, reverse the order of the vertices in $P/e$.) This means that $v'$ is on the subpath of $P_i$ from $v_{m(i)}$ to $v_p$ (including the possibility that $v'=v_{m(i)}$ or $v'=v_p$). 

Let $R$ be the path consisting of the subpath of $P/e$ from $v_0$ to $v_p$ and the subpath of $P_i$ from $v_p$ to $v_r$. Then $R$ is a path from $v_0$ to $v_r$ that is either a path in $H$ or obtained from a path in $H$ by relabeling the vertex $v$ as $v'$. In either case, this implies that $R$ must be longer than $P$. Thus, the subpath of $P_i$ from $v_p$ to $v_r$ is longer than the subpath of $P/e$ from $v_p$ to $v_r$, but this contradicts Claim \ref{same_vertex}. 
\end{subproof}

By} \cref{1divergence}, there is exactly one subpath of each $P_i'$ whose intersection with {\color{black} $P/e$} is the endpoints of the subpath. Let $p(i)$ be the length (number of edges) of the subpath of $P_i$ from $v_{m(i)}$ to $v'$, and let $q(i)$ be the length of the subpath of $P_i$ from $v'$ to $v_{n(i)}$. Let the sequence of vertices of $P_i$ be $x=v_0,v_1,\ldots,v_{m(i)},u_1,\ldots,u_{p(i)-1},u_{p(i)}=v'=w_{q(i)},w_{q(i)-1},\ldots,w_1,v_{n(i)},\ldots,v_r=y$. (See \cref{fig:PandP'}; note that it is possible $v_{m(i)}=v'$ or $v_{n(i)}=v'$, in which case $p(i)=0$ or $q(i)=0$, respectively.)

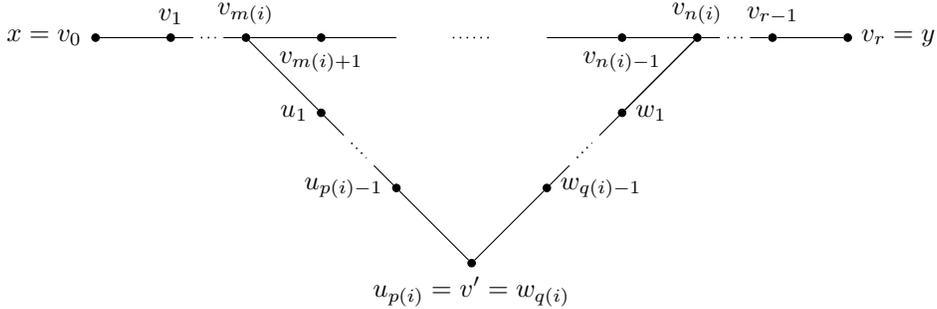
\begin{figure}[!htbp]
\[\begin{tikzpicture}[x=1cm, y=1cm]
\vertex[fill,inner sep=1pt,minimum size=1pt] (v_0) at (-5,3)[label=left:$x\equal v_0$] {};
\vertex[fill,inner sep=1pt,minimum size=1pt] (v_1) at (-4,3)[label=above:$v_1$]{};
\vertex[fill,inner sep=1pt,minimum size=1pt] (vmi) at (-3,3)[label=above:$v_{m(i)}$]{};
\vertex[fill,inner sep=1pt,minimum size=1pt] (vmi+1) at (-2,3)[label=below:$v_{m(i)+1}$]{};
\vertex[fill,inner sep=1pt,minimum size=1pt] (vni-1) at (2,3)[label=below:$v_{n(i)-1}$]{};
\vertex[fill,inner sep=1pt,minimum size=1pt] (vni) at (3,3)[label=above:$v_{n(i)}$]{};
\vertex[fill,inner sep=1pt,minimum size=1pt] (vr-1) at (4,3)[label=above:$v_{r-1}$]{};
\vertex[fill,inner sep=1pt,minimum size=1pt] (v_r) at (5,3)[label=right:$v_r\equal y$] {};
\vertex[fill,inner sep=1pt,minimum size=1pt] (u_1) at (-2,2)[label=left:$u_1$] {};
\vertex[fill,inner sep=1pt,minimum size=1pt] (up-1) at (-1,1)[label=left:$u_{p(i)-1}$] {};
\vertex[fill,inner sep=1pt,minimum size=1pt] (v') at (0,0)[label=below:$u_{p(i)}\equal v'\equal w_{q(i)}$] {};
\vertex[fill,inner sep=1pt,minimum size=1pt] (wq-1) at (1,1)[label=right:$w_{q(i)-1}$] {};
\vertex[fill,inner sep=1pt,minimum size=1pt] (w_1) at (2,2)[label=right:$w_1$] {};

\draw(v_0.center) to (v_1.center) to (-3.7,3); \draw[dotted,semithick] (-3.6,3) to (-3.4,3); \draw (-3.3,3) to (vmi) to (vmi+1) to (-1,3); 
\draw[dotted,semithick] (-.25,3) to (.25,3); \draw (1,3) to (vni-1) to (vni) to (3.3,3); \draw[dotted,semithick] (3.6,3) to (3.4,3); \draw (3.7,3) to (vr-1) to (v_r);
\draw(vni) to (w_1);\draw (vmi) to (u_1) to (-1.7,1.7);
\draw[dotted,semithick] (-1.6,1.6) to (-1.4,1.4);
\draw (-1.3,1.3) to (up-1) to (v') to (wq-1) to (1.3,1.3); \draw[dotted,semithick] (1.4,1.4) to (1.6,1.6); \draw (1.7,1.7) to (w_1) to (vni); 

\end{tikzpicture}\]
\caption{The paths $P$ and $P_i$}
\label{fig:PandP'}
\end{figure}

Based on this labeling of the vertices, we see that $|{\color{black}P/e}|=|P_i|=1+m(i)+p(i)-1+q(i)+r-(n(i)-1)=m(i)+p(i)+q(i)+r-n(i)+1$. Recall that $P_i$ and $P_j$ have the same length, for $i,j\in\{1,\ldots,t\}$. Therefore, for all $i,j\in\{1,\ldots,t\}$, we have \[m(i)+p(i)+q(i)-n(i)=m(j)+p(j)+q(j)-n(j).\]

\begin{claim}
For all $i,j\in\{1,\ldots,t\}$, we have $m(i)+p(i)=m(j)+p(j)$ and $q(i)-n(i)=q(j)-n(j)$.
\end{claim}

\begin{subproof}
Suppose for a contradiction that $m(i)+p(i)<m(j)+p(j)$. Since $m(i)+p(i)+q(i)-n(i)=m(j)+p(j)+q(j)-n(j)$, we have $q(i)-n(i)>q(j)-n(j)$. Consider the {\color{black}path in} $H/e$ consisting of the subpath of $P_i$ from $x$ to $v'$ and the subpath of $P_j$ from $v'$ to $y$. This {\color{black}is} a path from $x$ to $y$ whose number of vertices is $m(i)+p(i)+q(j)+r-n(j)+1<m(j)+p(j)+q(j)+r-n(j)+1=|P|$. {\color{black} This path} implies the existence of a path in $H$ from $x$ to $y$ whose length is no longer than that of $P$, a contradiction.

Thus, we have proved the first equality of the claim. The second equality follows from the first equality and the fact that $m(i)+p(i)+q(i)-n(i)=m(j)+p(j)+q(j)-n(j)$.
\end{subproof}

\begin{claim}
\label{mlessn}
For all $i,j\in\{1,\ldots,t\}$, we have $m(i)<n(j)$.
\end{claim}

\begin{subproof}
Suppose for a contradiction that $m(i)\geq n(j)$. It follows from \cref{1divergence} that the number of vertices on the subpaths of {\color{black}$P/e$} and $P_i$ from $v_{m(i)}$ to $v_{n(i)}$ must be equal. Similarly, the number of vertices on the subpaths of {\color{black}$P/e$} and $P_j$ from $v_{m(j)}$ to $v_{n(j)}$ must be equal. The number of vertices of the subpath of {\color{black}$P/e$} from $v_{m(i)}$ to $v_{n(i)}$ is $n(i)-m(i)+1$, and the number of vertices of the subpath of {\color{black}$P/e$} from $v_{m(j)}$ to $v_{n(j)}$ is $n(j)-m(j)+1$. The number of vertices of the subpath of $P_i$ from $v_{m(i)}$ to $v_{n(i)}$ is $p(i)+q(i)+1$, and number of vertices of the subpath of $P_j$ from $v_{m(j)}$ to $v_{n(j)}$ is $p(j)+q(j)+1$. Thus, we have $n(i)-m(i)=p(i)+q(i)$ and $n(j)-m(j)=p(j)+q(j)$.

Consider the {\color{black} path $\widehat{P}$ in} $H/e$ consisting of the subpath of $P_j$ from $x$ to $v'$ and the subpath of $P_i$ from $v'$ to $y$. This {\color{black} path has} $m(j)+p(j)+q(i)+r-n(i)+1$ vertices. Therefore, {\color{black}$\widehat{P}$ is a path} from $x$ to $y$ such that $|\widehat{P}|\leq m(j)+p(j)+q(i)+r-n(i)+1$. This path must be at least as long as $P$. Therefore, $r+1\leq m(j)+p(j)+q(i)+r-n(i)+1$, implying that $n(i)-m(j)\leq p(j)+q(i)$.

On the other hand, $n(i)-m(j)\geq n(i)-m(i)+n(j)-m(j)=p(i)+q(i)+p(j)+q(j)$. This leads to a contradiction unless $p(i)=q(j)=0$.

Thus, $v_{m(i)}=v'=v_{n(j)}$ is a vertex of {\color{black}$P/e$}. This implies $m(i)=n(j)$. Let $u$ be the endpoint of $e$ not on $P$. Consider the {\color{black}path in} $H$ consisting of the subpath of $P_j'$ from $x$ to $u$ and the subpath of $P_i$ from $u$ to $y$. This subgraph contains a path from $x$ to $y$ whose number of vertices is at most $m(j)+p(j)+q(i)+r-n(i)+1=m(i)+p(i)+q(j)+r-n(j)+1=r+1=|P|$, a contradiction.
\end{subproof}

Let $m=\max\{m(i):1\leq i\leq t\}$ and $n=\min\{n(i):1\leq i\leq t\}$. By \cref{mlessn}, we have $m<n$. Recall from \cref{3vertices} that $|P|\geq3${\color{black}, implying that $|P/e|\geq3$}. Therefore, either $n\geq2$ or $m+2\leq r$. (Otherwise, $m<n<2$, which implies $m=0$. If $m=0$ and $m+2>r$, then $r<2$, implying $|{\color{black} P/e}|=r+1<3$.) Without loss of generality, we assume $m+2\leq r$, reversing the order of the vertices and edges of ${\color{black} P/e}$ if necessary. Thus, $v_{m+2}$ is a vertex of {\color{black}$P/e$}.

Construct the graph $F$ from $H/e$ by adding the edge $f$ joining $v_m$ and $v_{m+2}$. We claim that the resulting path $P_F$ whose sequence of  vertices is $v_0,\ldots,v_m,v_{m+2},\ldots,v_r$ is the unique shortest path between $x$ and $y$ in $F$. Suppose otherwise for a contradiction. Then $F$ contains a path $P_F'\neq P_F$ from $x$ to $y$ with $|P_F'|\leq|P_F|=|P|-1$.

Suppose $P_F'$ is a path in $H/e$. Then $H$ contains a path $Q$ from $x$ to $y$ with at most $|P|$ vertices such that {\color{black}$P_F'=Q/e$}. Since $P$ is the unique shortest path from $x$ to $y$ in $H$, we must have $Q=P$. Thus, {\color{black}$P_F'=Q/e=P/e$, which has the same length as $P$} since $P$ does not contain $e$.  This contradicts the assumption that $|P_F'|\leq|P|-1$. Therefore, $P_F'$ is not a path in $H/e$. Thus, $P_F'$ contains the edge $f$.

Let $P_F''$ be the path obtained from $P_F'$ by replacing edge $f$ with the subpath of {\color{black}$P/e$} from $v_m$ to $v_{m+2}$. Then $|P_F''|=|P_F'|+1\leq|P|$. Since $P_F''$ is a path in $H/e$, this implies that $P_F''=P_i$ for some $i\in\{1,\ldots,t\}$. Since $v_m$ is a vertex of $P_i$, we must have $m(i)=m$. Since $v_{m+2}$ is a vertex of $P_i$, we must have $n(i)\in\{m+1,m+2\}$. But then $P_F'=P_F$, a contradiction.

Thus, we have shown that $P_F$ is the unique shortest path between $x$ and $y$ in $F$. Therefore, $\usp{F}\geq|V(P)|-1=\usp{H}-1$. Thus, $\uspc{F}=|V(H/e)|-\usp{F}=|V(H)|-1-\usp{F}\leq|V(H)|-1-\usp{H}+1=\uspc{H}$, contradicting the minimality of $H$. 
\end{proof}

\section{Disconnected Graphs}
\label{Disconnected Graphs}

In this section, we establish that the spectator floor is additive over disconnected graph components.  We proved \cref{prop:components} independently of \cref{lem:no decontract} and have decided to include both proofs in their entirety. However, it is interesting to note that either result could be used to prove the other.  We will now prove \cref{prop:components-intro} restated below.



\begin{theorem}
\label{prop:components}
For any {\color{black}disjoint union of graphs} $G=G_1\sqcup G_2$,
\[\uspcf{G}=\uspcf{G_1}+\uspcf{G_2}.\]
\end{theorem}

\begin{proof}
Let $G=G_1\sqcup G_2$, where $G$ contains no edges connecting $G_1$ to $G_2$.  The statement is trivially true when either $G_1$ or $G_2$ is empty, so assume that neither $G_1$ nor $G_2$ is empty.

For $i=1,2$, let $F_i$ be a graph that realizes $\uspcf{G_i}$; that is, $F_i$ is such that $G_i$ is a minor of $F_i$, and $F_i$ has a unique shortest path $P_i$ with $|F_i|-\uspcf{G_i}$ vertices.

Let $F$ be $F_1\sqcup F_2$ with an edge $e$ added to make a path $P$ that connects $P_1$ and $P_2$ end-to-end.  Then $G$ is a minor of $F$, and $P$ is a unique shortest path in $F$ with
\[|F_1|-\uspcf{G_1}+|F_2|-\uspcf{G_2}=|F|-\uspcf{G_1}-\uspcf{G_2}\]
vertices.  Hence $\uspc{F}\leq \uspcf{G_1}+\uspcf{G_2}$ and so $\uspcf{G}\leq \uspcf{G_1}+\uspcf{G_2}$.

Now, contrary to the statement of the result, suppose that $\uspcf{G}< \uspcf{G_1}+\uspcf{G_2}$.  Then there is some graph $H$, of which $G$ is a minor, such that $\uspc{H}=\uspcf{G}<\uspcf{G_1}+\uspcf{G_2}$.  Since $G$ is a minor of $H$, we can build $H$ from $G$ by a sequence of decontracting vertices, adding edges, and/or adding isolated vertices.  We will now construct a partition of $H$ into three \textcolor{black}{parts}: a subgraph $H_1$, a subgraph $H_2$, and a set of ``bridge'' edges $B$.

Let us start by defining $H_1$ as $G_1$ and $H_2$ as $G_2$.  Next, for each time that a vertex is decontracted while transforming $G$ into $H$, any vertices or edges that are doubled by the decontraction will remain in the same set that they were in before.  Whenever a new edge is added, if it connects two vertices in the same $H_i$, that edge will be added to $H_i$.  If the new edge connects a vertex from $H_1$ to a vertex from $H_2$, the edge will be added to the bridge edge set $B$.  Whenever a new isolated vertex is added, it will be randomly placed into either $H_1$ or $H_2$.

At the end of this process, we have that all the vertices of $H$ are in either $H_1$ or $H_2$, all the edges of $H$ are in $H_1$, $H_2$, or $B$, and no vertex or edge is in more than one of these.  Furthermore, $G_1$ is a minor of $H_1$ and $G_2$ is a minor of $H_2$.

Let $P$ be a path in $H$ that contains $\usp{H}$ vertices.  $P$ cannot be entirely within either $H_1$ or $H_2$ for the following reasons.  Suppose, without loss of generality, that $P$ is a subgraph of $H_1$.  Since $G$ is a minor of $H_1\cup G_2$, that means $\uspcf{G}\leq \uspcf{H_1\cup G_2}$.  Since $H_1$ and $G_2$ are disjoint, by the same arguments used for $G,G_1,G_2$ previously, we can conclude that $\uspcf{H_1\cup G_2}\leq \uspcf{H_1}+\uspcf{G_2}$.  Hence, $\uspcf{G}\leq \uspcf{H_1}+\uspcf{G_2}$.  This implies that
\begin{align*}
    \uspcf{G}\leq \uspc{H_1}+\uspcf{G_2} &=|H_1|-\usp{H_1}+\uspcf{G_2}\\
    &=|H_1|-\usp{H}+\uspcf{G_2}\\
    &=|H_1|+|H_2|-\usp{H}+\uspcf{G_2}-|H_2|\\
    &=|H|-\usp{H}+\uspcf{G_2}-|H_2|\\
    &=\uspc{H}+\uspcf{G_2}-|H_2|.
\end{align*}

By definition, $\uspcf{G}=\uspc{H}$, so the above inequality simplifies to $0\leq \uspcf{G_2}-|H_2|$.  Furthermore, $\uspcf{G_2}\leq \uspc{G_2}=|G_2|-\usp{G_2}$.  Hence, $0\leq |G_2|-\usp{G_2}-|H_2|$.  However, since $G_2$ is a minor of $H_2$, the quantity $|G_2|-|H_2|\leq 0$.  Hence, we have $0\leq -\usp{G_2}$, which implies that $G_2$ is an empty graph, in contradiction to our assumption that neither $G_1$ nor $G_2$ is empty.

Returning to our discussion of the graph $H$, we now know that the path $P$ with $\usp{H}$ vertices must contain vertices from both $H_1$ and $H_2$, and so $P$ must contain at least one edge in $B$. We will now show that this leads to a contradiction as well.

Suppose that $P$ contains $m$ edges in $B$, where $m\geq 1$.  Assume without loss of generality that $P$ has at least one end in $H_1$.  Then we can break $P$ up into disjoint subpaths $Q_1,Q_2,...,Q_j$ in $H_1$ and $R_1,R_2,...,R_k$ in $H_2$, such that $P$ consists of $Q_1$ followed by $R_1$ followed by $Q_2$ followed by $R_2$, etc., with these subpaths linked together by edges in the bridge $B$.  (See \cref{disconnected_diagram}.) Note that if $m$ is even, then $j=k+1$ and $m=2k$, whereas if $m$ is odd, then $j=k$ and $m=2k-1$.

\begin{figure}[!htbp]
\[\begin{tikzpicture}[x=1cm, y=1cm]

\fill[gray!20,rounded corners] (-5,2) rectangle (-1,6);
\fill[gray!20] (-5,1) rectangle (-1,3);

\fill[gray!20,rounded corners] (5,2) rectangle (1,6);
\fill[gray!20] (5,1) rectangle (1,3);

\shade[
    left color = gray!20,
    right color = white,
    shading angle = 0
] (5,0.5) rectangle (1,1);

\shade[
    left color = gray!20,
    right color = white,
    shading angle = 0
] (-5,0.5) rectangle (-1,1);

\draw[rounded corners] (-1,1) -- (-1,6) -- (-5,6) -- (-5,1); 
\draw[rounded corners] (1,1) -- (1,6) -- (5,6) -- (5,1); 
\draw[dotted,semithick] (1,1) -- (1,0.5);
\draw[dotted,semithick] (5,1) -- (5,0.5);
\draw[dotted,semithick] (-1,1) -- (-1,0.5);
\draw[dotted,semithick] (-5,1) -- (-5,0.5);

\node at (-3.5,6)[label=above:$H_1$] {};
\node at (3.5,6)[label=above:$H_2$] {};
\node at (0,6)[label=above:$B$] {};

\vertex[fill,inner sep=1pt,minimum size=1pt] (a1) at (-3,5.5)[label=below:$a_1$]{};
\vertex[fill,inner sep=1pt,minimum size=1pt] (b1) at (-1.5,5.5)[label=below left:$b_1$]{};
\draw (a1) to (-2.5,5.5);
\draw[dotted, semithick] (-2.5,5.5) to (-2,5.5);
\draw (-2,5.5) to (b1);

\node at (-4,5.5) {$Q_1$};
\node at (-4,4) {$Q_2$};
\node at (-4,2) {$Q_3$};

\node at (4,5) {$R_1$};
\node at (4,3) {$R_2$};
\node at (4,1) {$R_3$};

\vertex[fill,inner sep=1pt,minimum size=1pt] (c1) at (1.5,5.5)[label=below right:$c_1$]{};
\vertex[fill,inner sep=1pt,minimum size=1pt] (d1) at (1.5,4.5)[label=below right:$d_1$]{};
\draw (c1) to (2.5,5.5);
\draw (2.5,4.5) to (d1);
\draw (2.5,5.5) arc (90:45:0.5);
\draw (2.5,4.5) arc (-90:-45:0.5);
\draw[dotted,semithick] (2.5,5.5) arc (90:-90:0.5);

\vertex[fill,inner sep=1pt,minimum size=1pt] (a2) at (-1.5,4.5)[label=below left:$a_2$]{};
\vertex[fill,inner sep=1pt,minimum size=1pt] (b2) at (-1.5,3.5)[label=below left:$b_2$]{};
\draw (a2) to (-2.5,4.5);
\draw (-2.5,3.5) to (b2);
\draw (-2.5,4.5) arc (90:135:0.5);
\draw (-2.5,3.5) arc (-90:-135:0.5);
\draw[dotted,semithick] (-2.5,4.5) arc (90:270:0.5);

\draw (b1) to[in=150,out=30] (c1);
\draw (a2) to[in=150,out=30] (d1);

\vertex[fill,inner sep=1pt,minimum size=1pt] (c2) at (1.5,3.5)[label=below right:$c_2$]{};
\vertex[fill,inner sep=1pt,minimum size=1pt] (d2) at (1.5,2.5)[label=below right:$d_2$]{};
\draw (c2) to (2.5,3.5);
\draw (2.5,2.5) to (d2);
\draw (2.5,3.5) arc (90:45:0.5);
\draw (2.5,2.5) arc (-90:-45:0.5);
\draw[dotted,semithick] (2.5,3.5) arc (90:-90:0.5);

\vertex[fill,inner sep=1pt,minimum size=1pt] (a3) at (-1.5,2.5)[label=below left:$a_3$]{};
\vertex[fill,inner sep=1pt,minimum size=1pt] (b3) at (-1.5,1.5)[label=below left:$b_3$]{};
\draw (a3) to (-2.5,2.5);
\draw (-2.5,1.5) to (b3);
\draw (-2.5,2.5) arc (90:135:0.5);
\draw (-2.5,1.5) arc (-90:-135:0.5);
\draw[dotted,semithick] (-2.5,2.5) arc (90:270:0.5);

\draw (b2) to[in=150,out=30] (c2);
\draw (a3) to[in=150,out=30] (d2);

\vertex[fill,inner sep=1pt,minimum size=1pt] (c3) at (1.5,1.5)[label=below right:$c_3$]{};
\draw (b3) to[in=150,out=30] (c3);
\draw (c3) to (2.5,1.5);
\draw (2.5,1.5) arc (90:45:0.5);
\draw[dotted,semithick] (2.5,1.5) arc (90:0:0.5);

\draw[dashed] (b1) to[out=-45,in=45] (a2);
\draw[dashed] (b2) to[out=-45,in=45] (a3);
\draw[dashed] (d1) to[out=-135,in=135] (c2);
\draw[dashed] (d2) to[out=-135,in=135] (c3);

\end{tikzpicture}\]
\caption{A generalized diagram of the subgraph of $H$ induced by the path $P$.  Dashed lines indicate edges that will be added to form the graphs $H_1'$, $H'$, $H_2''$, and $H''$.}\label{disconnected_diagram}
\end{figure}
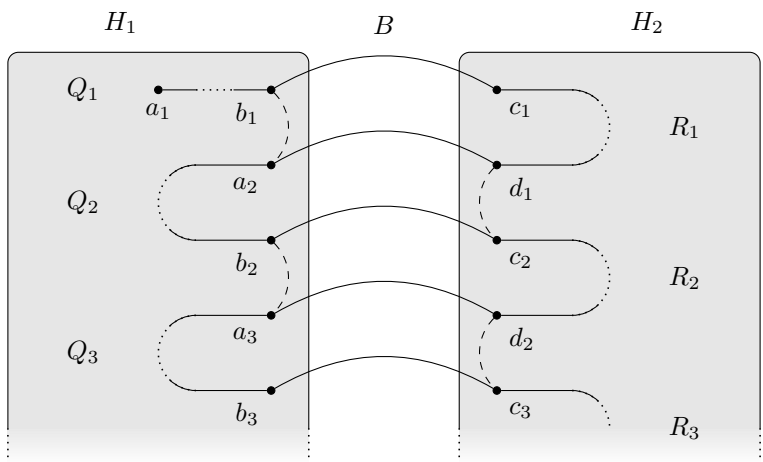

Furthermore, let us label the first vertex of each $Q_i$ by the name $a_i$, and the last vertex of each $Q_i$ by $b_i$, the first vertex of each $R_i$ by $c_i$, and the last vertex of each $R_i$ by $d_i$.  Hence, each vertex $b_i$ is connected to the vertex $c_i$ by an edge in $B$, except possibly for $b_j$, and each vertex $d_i$ is connected to the vertex $a_{i+1}$ by an edge in $B$, except possibly for $d_k$.

Let $p$ stand for the number of vertices in $P$, $q$ the total number of vertices in $Q_1,Q_2,...,Q_j$, and $r$ the total number of vertices in $R_1,R_2,...,R_k$.  Hence $p=q+r$.  By assumption, $\uspc{H}=|H|-p<\uspcf{G_1}+\uspcf{G_2}$.  Since $|H|-p=|H_1|-q+|H_2|-r$, it must be the case that either $|H_1|-q<\uspcf{G_1}$ or $|H_2|-r<\uspcf{G_2}$ (since, if both of these were false, it would contradict the original inequality).  However, we will now show that neither of these is true.

Observe that since $P$ is a unique shortest path in $H$, there cannot be an edge in $H_1$ connecting any $b_i$ to $a_{i+1}$, because that would form a shorter path.  Likewise, there cannot be an edge in $H_2$ connecting any $d_i$ to $c_{i+1}$.

Let $H_1'$ (respectively, $H'$) be formed by taking $H_1$ (respectively, $H$) and adding an edge from each $b_i$ to $a_{i+1}$, except for $b_j$, the last one.  (Note that if $j=1$, then $H_1'=H_1$.)  These edges are indicated by dashed lines in \cref{disconnected_diagram}.  Since $P$ was a unique shortest path in $H$ and none of these new edges were present previously, we now have a shorter unique path connecting the endpoints of $P$.

Let us call the portion of this new unique shortest path that lies in $H_1'$ by the name $P'$.  (Unless $j=1$, then let $P'$ be the portion of $P$ in $H_1$.)  Since any subpath of a unique shortest path is also a unique shortest path, $P'$ is a unique shortest path with $q$ vertices contained in $H_1'$, hence $\uspc{H_1'}\leq |H_1'|-q=|H_1|-q$.  Furthermore, since $G_1$ is a minor of $H_1'$, we have $\uspcf{G_1}\leq \uspc{H_1'}\leq |H_1|-q$.

Now, let $H_2''$ (respectively, $H''$) be formed by taking $H_2$ (respectively, $H$) and adding an edge from each $d_i$ to $c_{i+1}$, except for $d_k$, the last one.  (Note that if $k=1$, then $H_2''=H_2$.)  These edges are indicated by dashed lines in \cref{disconnected_diagram}.  Since $P$ was a unique shortest path in $H$ and none of these new edges were present previously, we now have a shorter unique path connecting the endpoints of $P$.

Let us call the portion of this new unique shortest path that lies in $H_2''$ by the name $P''$.  (Unless $k=1$, then let $P''$ be the portion of $P$ in $H_2$.)  Since any subpath of a unique shortest path is also a unique shortest path, $P''$ is a unique shortest path with $r$ vertices contained in $H_2''$, hence $\uspc{H_2''}\leq |H_2''|-r=|H_2|-r$.  Furthermore, since $G_2$ is a minor of $H_2''$, we have $\uspcf{G_2}\leq \uspc{H_2''}\leq |H_2|-r$.
\end{proof}

The following is a direct consequence of the last proposition.
\begin{cor}
\label{cor:components}
For any graph $G$ that consists of connected components $G_1, G_2, ..., G_n$,
\[\uspcf{G}=\uspcf{G_1}+\uspcf{G_2}+\cdots+\uspcf{G_n}.\]
\end{cor}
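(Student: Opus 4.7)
The proof is a routine induction on the number $n$ of connected components, using \cref{prop:components} as the inductive engine. The key point is that the hypothesis of \cref{prop:components} requires only that the two pieces $G_1$ and $G_2$ have no edges between them, not that each piece be connected; this flexibility is exactly what allows the two-summand result to bootstrap into an arbitrary finite sum.

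For the base case $n=1$ the statement is the tautology $\uspcf{G}=\uspcf{G_1}$. For the inductive step, assume the corollary holds whenever the number of components is less than $n$, and let $G$ have components $G_1,\ldots,G_n$ with $n\geq 2$. Set $G'=G_2\sqcup\cdots\sqcup G_n$, so that $G=G_1\sqcup G'$ with no edges of $G$ joining $G_1$ to $G'$, since these vertex sets lie in different connected components of $G$. Then \cref{prop:components} applied to this decomposition yields
\[\uspcf{G}=\uspcf{G_1}+\uspcf{G'}.\]
The graph $G'$ has connected components $G_2,\ldots,G_n$, which is fewer than $n$ in number, so the inductive hypothesis gives $\uspcf{G'}=\uspcf{G_2}+\cdots+\uspcf{G_n}$, and substituting completes the induction.

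There is no substantive obstacle: the only thing to verify is that \cref{prop:components} is stated in sufficient generality to accommodate the one possibly-disconnected summand $G'$ that appears in the inductive step, which it is. Had the proposition instead required both pieces of the decomposition to be connected, this direct induction would fail, and one would be forced to redo an argument in the spirit of the proof of \cref{prop:components}, carefully partitioning an arbitrary minor $H$ of $G$ into $n$ pieces at once.
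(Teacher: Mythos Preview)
Your proof is correct and matches the paper's approach: the paper simply states that the corollary is a direct consequence of \cref{prop:components}, and your induction is exactly the routine argument making that precise. Your observation that \cref{prop:components} does not require either summand to be connected is the only point worth checking, and you handle it correctly.
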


\section{Trees}
\label{trees}
This section contains results that apply to trees, with the exception of \cref{diam.bound} and \cref{lem:diameter}, which offer bounds that apply to all graphs, and were inspired by the consideration of trees.

We begin by noting that since $\usp{G}$ is the number of vertices in the longest unique shortest path and $\diam(G)$ is the length of the longest shortest path in $G$, which may or may not be unique, we have the following:
\begin{obs}\label{diam.bound}
For any graph $G$, $\usp{G} \leq \diam(G) + 1$ and so $\uspc{G} \geq |G| - \diam(G) - 1$.  
\end{obs}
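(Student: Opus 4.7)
The plan is to unpack the definitions of $\usp{G}$, $\diam(G)$, and $\uspc{G}$ and observe that the first inequality follows immediately from the fact that every unique shortest path is, in particular, a shortest path between its endpoints, so its length is at most the diameter.

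More concretely, I would let $P$ be a parade in $G$, so that $P$ is a unique shortest path realizing $\usp{G}$, and denote its endpoints by $u$ and $v$. Since $P$ is a shortest path from $u$ to $v$, its length equals the distance $d(u,v)$, and since $d(u,v) \leq \diam(G)$ by definition of diameter, the length of $P$ is at most $\diam(G)$. The order of $P$ is one more than its length, so $\usp{G} = |V(P)| \leq \diam(G) + 1$. The second inequality is then purely algebraic: since $\uspc{G} = |G| - \usp{G}$, subtracting both sides of $\usp{G} \leq \diam(G) + 1$ from $|G|$ yields $\uspc{G} \geq |G| - \diam(G) - 1$.

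There is no real obstacle here; the statement is labeled an observation precisely because it follows immediately from unwinding definitions. The only mild subtlety worth noting explicitly is that a parade is required to exist in order for $\usp{G}$ to be defined, but since every single vertex (viewed as a path of order one) is trivially a unique shortest path from itself to itself, $\usp{G}$ is always a well-defined positive integer, and the argument above goes through for any graph $G$.
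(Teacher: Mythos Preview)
Your proposal is correct and matches the paper's own justification, which is given in the sentence immediately preceding the observation rather than in a separate proof environment: since $\usp{G}$ counts vertices in a longest unique shortest path and $\diam(G)$ is the length of a longest shortest path, the inequality is immediate.
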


Next, we parlay \cref{diam.bound} into an exact formula for the spectator floor of a tree.

\begin{theorem}\label{tree.uspcf}
For a tree $T$, $\uspcf{T}=|T|-\diam(T)-1$.
\end{theorem}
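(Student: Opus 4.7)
The plan is to sandwich $\uspcf{T}$ between two matching bounds, using \Cref{diam.bound} as the ceiling and \Cref{lem:no decontract} to reduce the floor to something about supergraphs on the same vertex set.

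For the upper bound, I would observe that in a tree there is exactly one path between any two vertices, so the path realizing $\diam(T)$ is automatically a unique shortest path of order $\diam(T)+1$. Thus $\usp{T} \ge \diam(T)+1$, and combined with \Cref{diam.bound} this gives $\usp{T}=\diam(T)+1$, hence $\uspc{T}=|T|-\diam(T)-1$. Since $T$ is a minor of itself, $\uspcf{T} \le \uspc{T} = |T|-\diam(T)-1$.

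For the lower bound, I would invoke \Cref{lem:no decontract} to obtain a graph $G'$ with $V(G')=V(T)$, with $T$ as a subgraph of $G'$, and with $\uspcf{T}=\uspc{G'}$. The key observation is that adding edges cannot increase diameter: every path in $T$ is still a path in $G'$, so $\mathrm{dist}_{G'}(u,v)\le \mathrm{dist}_T(u,v)$ for all $u,v$, giving $\diam(G')\le \diam(T)$. Applying \Cref{diam.bound} to $G'$ then yields $\usp{G'}\le \diam(G')+1\le \diam(T)+1$, and therefore
\[
\uspcf{T}=\uspc{G'}=|G'|-\usp{G'}\ge |T|-\diam(T)-1.
\]
Combining the two bounds gives the claimed equality.

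I do not anticipate a main obstacle here: the argument is essentially a two-line corollary of the two earlier results, and the only content beyond them is the elementary fact that the diameter-realizing path in a tree is the unique path between its endpoints. The main thing to check carefully is that \Cref{lem:no decontract} really delivers a supergraph on the same vertex set (rather than merely a graph with $T$ as a minor in some weaker sense), since this is what lets us compare $|G'|=|T|$ and $\diam(G')\le \diam(T)$ directly without any bookkeeping for extra vertices.
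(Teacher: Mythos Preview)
Your proof is correct. The upper bound argument is identical to the paper's. For the lower bound, however, the paper does \emph{not} invoke \Cref{lem:no decontract}; instead it argues directly: take any graph $G$ with $T$ as a minor realizing the floor, note $|G|-\diam(G)-1\le\uspc{G}$, and derive $|G|-|T|<\diam(G)-\diam(T)$, which is impossible because each decontraction raises the diameter by at most $1$ and each added edge raises it not at all. Your route via \Cref{lem:no decontract} is shorter and cleaner once that theorem is in hand (and in fact the paper uses exactly your argument later, in the proof of \Cref{lem:diameter}); the paper's direct argument has the modest advantage of making \Cref{tree.uspcf} independent of the heavier machinery in Section~\ref{Minor Operations and the Spectator Floor}.
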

\begin{proof}
Let $T$ be a tree.  Since all paths in a tree are unique, $\usp{T}=\diam(T)+1$, and so $\uspc{T}=|T|-\diam(T)-1$.

Suppose that $\uspcf{T}<|T|-\diam(T)-1$.  Then there exists some graph $G$ which has $T$ as a minor, such that $\uspcf{T}=\uspc{G}<|T|-\diam(T)-1$.



The bound $\usp{G} \leq \diam(G)+1$ implies that $|G|-\diam(G)-1\leq \uspc{G}$.  Then we have
\[|G|-\diam(G)-1<|T|-\diam(T)-1,\]
and rearranging this, we get
\[|G|-|T|<\diam(G)-\diam(T).\]
The quantity on the left, $|G|-|T|$, is the number of decontractions 
performed to transform $T$ into $G$.  Note that one decontraction can increase the diameter of a graph by at most 1.  Furthermore, adding an edge cannot increase the diameter of a graph.  Therefore, the quantity $\diam(G)-\diam(T)$ must be less than or equal to the number of decontractions performed to transform $T$ into $G$.  This is a contradiction.
\end{proof}

The next theorem is important in establishing \cref{cor:diam-paths}, which tells us the conditions under which a tree is minor minimal for a given value of the spectator floor.  First we need two definitions.

A \emph{diametric path} of a graph is a path whose length is the diameter. The endpoints of such a path are called a \emph{diametric pair} of vertices.

\begin{theorem}
For any tree $T$, there exists
a non-empty elementary minor of $T$ with the same spectator floor as $T$ if and only if there exists
an edge $e$ of $T$ such that contracting $e$ reduces the diameter of $T$.
\end{theorem}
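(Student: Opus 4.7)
The plan is to reduce everything to how the diameter behaves under the elementary minor operations, using \cref{tree.uspcf}, which gives $\uspcf{T} = |T| - \diam(T) - 1$.

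For the ``if'' direction, suppose $e$ is an edge with $\diam(T/e) < \diam(T)$. Because contracting an edge can decrease any distance by at most one and never increases distances, in fact $\diam(T/e) = \diam(T) - 1$. Since $T$ is a tree, $e$ lies in no triangle and $T/e$ is again a tree, now on $|T|-1$ vertices. Applying \cref{tree.uspcf} to $T/e$ immediately yields $\uspcf{T/e} = \uspcf{T}$, so $T/e$ is a non-empty elementary minor with the desired property.

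For the ``only if'' direction, suppose $T'$ is a non-empty elementary minor of $T$ with $\uspcf{T'} = \uspcf{T}$, and split on the three types of elementary minor. Deletion of an isolated vertex is ruled out at once, since a tree is connected and the only option $T = K_1$ produces an empty minor. If $T' = T/e$, then \cref{tree.uspcf} applied to $T$ and to $T/e$ forces $\diam(T/e) = \diam(T) - 1$, and we are done. The remaining and substantive case is $T' = T \setminus e$. Writing $T \setminus e = T_1 \sqcup T_2$ and applying \cref{cor:components} together with \cref{tree.uspcf} on each component, the hypothesis $\uspcf{T \setminus e} = \uspcf{T}$ rewrites as
\[\diam(T_1) + \diam(T_2) = \diam(T) - 1.\]

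The main obstacle is to convert this equation into the existence of an edge whose contraction reduces $\diam(T)$. The edge $e$ itself will work. First, every diametric path of $T$ must contain $e$: if some diametric path $P$ avoided $e$, it would lie entirely in one component, say $T_1$, forcing $\diam(T_1) \geq \diam(T)$ and hence $\diam(T_1) + \diam(T_2) \geq \diam(T)$, contradicting the displayed equation. Second, since every diametric path of $T$ uses $e$, contracting $e$ strictly reduces the diameter: any hypothetical path of length $\diam(T)$ in $T/e$ lifts in $T$ to a path of length either $\diam(T)$ avoiding $e$ (contradicting the first observation) or $\diam(T)+1$ using $e$ (contradicting the diameter). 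Hence $\diam(T/e) < \diam(T)$, finishing the proof.
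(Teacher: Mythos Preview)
Your proof is correct and follows essentially the same approach as the paper: both reduce to \cref{tree.uspcf}, handle the isolated-vertex and contraction cases identically, and in the edge-deletion case derive the relation $\diam(T_1)+\diam(T_2)=\diam(T)-1$ and use it to show that every diametric path must pass through $e$. The only difference is organizational: the paper argues by contradiction (assuming additionally that no contraction reduces the diameter, then focusing on $T/e$), whereas you argue directly that contracting the same edge $e$ reduces the diameter; your version is slightly cleaner in that it explicitly names the witnessing edge.
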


\begin{proof}
The reverse implication is the easy direction:
Suppose that $e$ is an edge such that the contracted tree $T^\prime = T/e$
has diameter less than the diameter of $T$.
The distance between vertices in a tree is given by the unique path that joins them,
and so contracting $e$ in $T$ reduces the distance of any pair by exactly $1$
in the case that $e$ belongs to the unique path joining the pair, and leaves the
distance unchanged otherwise. In particular, the difference between
the diameter of $T^\prime$ and the diameter of $T$ can be at most one,
and so $\diam(T^\prime) = \diam(T) - 1$.
The number of vertices also differs by exactly $1$,
and so the elementary minor $T^\prime$ of $T$ satisfies
\[
\uspcf{T^\prime}=|T^\prime|-\diam(T^\prime)-1 = |T|-\diam(T)-1 = \uspcf{T}.
\]

For the forward implication, there are three sorts of elementary minors.
Since $T$ is a connected tree, deletion of an isolated vertex can
only happen in the case $T=K_1$, leaving the empty graph as a minor.
Contraction of an edge $e$ that leaves the spectator number
unchanged must reduce the diameter by $1$ by the same calculation as above.
This leaves only the case of edge deletion.
Assume by way of contradiction, then, firstly that there does exist
a specific edge $e$ whose deletion produces
a disjoint union of trees $T_1$ and $T_2$ satisfying
\[
\uspcf{T_1 \cup T_2} = \uspcf{T_1} + \uspcf{T_2} = \uspcf{T},
\]
and secondly that no edge contraction reduces the diameter of $T$.
Using the diameter formula to substitute for 
$\uspcf{T_1}$, $\uspcf{T_2}$, and $\uspcf{T}$
produces an equation
\[
|T_1| - \diam(T_1) - 1 + |T_2| - \diam(T_2) - 1 = |T| - \diam(T) - 1
\]
whose simplification
\[
\diam(T) = \diam(T_1) + \diam(T_2) + 1
\]
implies the strict inequality
\[
\diam(T) > \diam(T_1)
\]
since $\diam(T_2) \ge 0$.

If no edge contraction reduces the diameter of $T$, then
in particular the contraction $T^\prime = T/e$ has the same diameter as $T$.
Let $p^\prime$ and $q^\prime$ be a diametric pair of vertices in $T^\prime$;
then $e$ cannot be part of the unique path that joins their preimages $p$ and $q$ in $T$.
It follows that $p$ and $q$ are in the same component $T_1$
or $T_2$ of $T\setminus e$. Without loss of generality, both are in $T_1$,
and thus that the diameter of $T_1$ is at least the diameter of
$T$:
\[
\diam(T) \le \diam(T_1).
\]
This contradicts the previous strict inequality and completes the proof.
\end{proof}

\begin{cor}
\label{cor:diam-paths}
A tree $T$ is minor-minimal for the spectator floor if and only if no edge lies
in the intersection of all diametric paths in $T$.
\end{cor}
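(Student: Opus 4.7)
The plan is to reduce the corollary to the preceding theorem by proving one short edge-contraction lemma. By the theorem and the minor-monotonicity of $\uspcf{\cdot}$, a tree $T$ is minor-minimal for the spectator floor exactly when no edge contraction $T/e$ satisfies $\diam(T/e) < \diam(T)$. Therefore it suffices to establish the following equivalence for an edge $e$ of a tree $T$: $\diam(T/e) < \diam(T)$ if and only if $e$ belongs to every diametric path of $T$. From this, the corollary follows at once, since the statement ``no edge contraction reduces the diameter'' translates exactly to ``no edge is in every diametric path,'' i.e., the intersection of all diametric paths contains no edge.

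For the forward direction of the equivalence, I would suppose that $e$ lies on every diametric path of $T$ and compute distances in $T/e$. For each pair $u,v$ the unique $u$--$v$ path in $T$ either contains $e$, in which case its length drops by one under contraction, or it does not, in which case the length is unchanged. If $(u,v)$ is a diametric pair, then the unique $u$--$v$ path in $T$ is a diametric path, hence contains $e$, giving $d_{T/e}(u,v) = \diam(T) - 1$. For non-diametric pairs the $T$-distance is already at most $\diam(T) - 1$, so the $T/e$-distance is too. This yields $\diam(T/e) = \diam(T) - 1$.

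For the backward direction, I would suppose that some diametric path $P$ in $T$ does not contain $e$. Since $T/e$ is again a tree, there is a unique path in $T/e$ between the endpoints of $P$, and it is precisely the image of the unique such path in $T$, namely $P$ itself; because $e \notin P$ this image still has length $\diam(T)$. Hence $\diam(T/e) \ge \diam(T)$, so the diameter is not reduced. The main (mild) obstacle is this backward direction: one must invoke that $T/e$ remains a tree in order to rule out shortcut paths between the endpoints of $P$ appearing in $T/e$. Everything else reduces to straightforward bookkeeping of distances in a tree, and combining the two directions of the lemma with the preceding theorem completes the proof.
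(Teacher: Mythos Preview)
Your proposal is correct and follows essentially the same route as the paper's proof. The paper's argument simply asserts the key equivalence (``contracting $e$ reduces the diameter if and only if $e$ lies in every diametric path'') without justification and then invokes the preceding theorem; you fill in that equivalence with a clean distance-bookkeeping argument, which is exactly what is needed.
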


\begin{proof}
The contraction of an edge $e$ reduces the diameter of $T$ if and only
$e$ lies in the intersection of all diametric paths in $T$,
and $T$ is minor-minimal for the spectator floor if and only if
no elementary minor of $T$ has the same spectator floor.
\end{proof}

As a result of \cref{cor:diam-paths}, we have the following, which tells us that star graphs are minor minimal for a given value of the spectator floor.

\begin{cor}
\label{cor:K1k+2}
For $k\geq1$, the graph $K_{1,k+2}$ is minor-minimal among graphs with spectator floor $k$.  
\end{cor}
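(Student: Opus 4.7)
The plan is to invoke \cref{cor:diam-paths} to establish minor-minimality, after first computing $\uspcf{K_{1,k+2}}$ via the tree formula of \cref{tree.uspcf}.

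For the spectator floor computation, I observe that $K_{1,k+2}$ has $k+3$ vertices and diameter exactly $2$ (every pair of leaves is joined by the length-$2$ path through the center, and no two vertices are farther apart). Hence \cref{tree.uspcf} yields $\uspcf{K_{1,k+2}} = (k+3) - 2 - 1 = k$.

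To establish minor-minimality, I would verify the hypothesis of \cref{cor:diam-paths}: no edge of $K_{1,k+2}$ lies in the intersection of all diametric paths. Every edge has the form $cv$, where $c$ denotes the center and $v$ denotes a leaf, and every diametric path has the form $u$-$c$-$w$ for a pair of distinct leaves $u,w$. Given any specific edge $cv$, the hypothesis $k \geq 1$ guarantees that $K_{1,k+2}$ has $k+2 \geq 3$ leaves, so one can select two leaves $u, w$ both distinct from $v$, producing a diametric path that avoids the edge $cv$. Therefore no single edge lies on every diametric path.

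Applying \cref{cor:diam-paths} then yields that $K_{1,k+2}$ is minor-minimal for the spectator floor. Since the spectator floor is minor-monotone (non-increasing under taking minors, directly from its definition as a minor-monotone floor), this means every proper minor of $K_{1,k+2}$ has spectator floor strictly less than $k$, so $K_{1,k+2}$ is minor-minimal among graphs with spectator floor $k$. No step here presents a real obstacle: the main work—the characterization of tree minor-minimality—has already been done in \cref{cor:diam-paths}, and what remains is a short verification of its hypothesis using the abundance of leaves.
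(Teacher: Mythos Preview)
Your proof is correct and follows essentially the same approach as the paper: compute $\uspcf{K_{1,k+2}}=k$ via \cref{tree.uspcf}, then use the fact that $k+2\ge 3$ leaves ensures no edge lies on every diametric path, so \cref{cor:diam-paths} applies. Your version simply spells out the diameter computation and the diametric-path argument in more detail than the paper does.
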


\begin{proof}
By \cref{tree.uspcf}, $\uspcf{K_{1,k+2}}=k+3-2-1=k$. Since $k+2\geq3$, no edge of $K_{1,k+2}$ lies in all of the diametric paths of $K_{1.k+2}$. Therefore, by \cref{cor:diam-paths}, $K_{1,k+2}$ is minor-minimal among graphs with spectator floor $k$.
\end{proof}

In the final result of this section, we slightly improve upon the bound of \cref{diam.bound}.

\begin{theorem}
\label{lem:diameter}
For every graph $G$, we have $\uspcf{G}\geq|G|-\diam(G)-1$. 
Moreover, if $\usp{G}\leq\diam(G)$, then $\uspcf{G}\geq|G|-\diam(G)$.
\end{theorem}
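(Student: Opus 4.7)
The plan is to leverage \cref{lem:no decontract} to reduce the minor-monotone question about $\uspcf{G}$ to a question about edge-supergraphs of $G$ on the same vertex set. The key fact about such a supergraph $G'$ is that every path in $G$ remains a path in $G'$ of the same length; in particular $\diam(G') \leq \diam(G)$.

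First I would prove the inequality $\uspcf{G} \geq |G| - \diam(G) - 1$ by picking, via \cref{lem:no decontract}, a graph $G'$ with $V(G') = V(G)$, $G \subseteq G'$, and $\uspcf{G} = \uspc{G'}$. Applying \cref{diam.bound} to $G'$ then gives
\[
\uspcf{G} = \uspc{G'} \geq |G'| - \diam(G') - 1 = |G| - \diam(G') - 1 \geq |G| - \diam(G) - 1.
\]

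For the sharper inequality $\uspcf{G} \geq |G| - \diam(G)$ under the hypothesis $\usp{G} \leq \diam(G)$, I would argue by contradiction. Assume $\uspcf{G} \leq |G| - \diam(G) - 1$ and pick $G'$ as above, so $\usp{G'} = |G| - \uspcf{G} \geq \diam(G) + 1$. Combined with the bound $\usp{G'} \leq \diam(G') + 1 \leq \diam(G) + 1$ from \cref{diam.bound}, this forces $\usp{G'} = \diam(G) + 1$ and $\diam(G') = \diam(G)$. Let $P$ be a unique shortest path in $G'$ from some vertex $u$ to some vertex $v$, of order $\diam(G) + 1$. Since $G \subseteq G'$ can only increase pairwise distances, $d_G(u,v) = d_{G'}(u,v) = \diam(G)$, so $G$ contains a shortest path from $u$ to $v$ of length $\diam(G)$. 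The hypothesis $\usp{G} \leq \diam(G)$ says no unique shortest path in $G$ has order $\diam(G) + 1$, so this shortest path is not unique in $G$; two distinct shortest paths from $u$ to $v$ in $G$ both sit inside $G'$ with length $\diam(G) = \diam(G')$, contradicting the uniqueness of $P$.

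The main obstacle is that the hypothesis $\usp{G} \leq \diam(G)$ is a property of $G$ itself, while $\uspcf{G}$ is defined through an unknown extremal supergraph. \cref{lem:no decontract} is the crucial bridge: by restricting attention to supergraphs of $G$ on the same vertex set, the distance structure of $G$ embeds into $G'$ in a controlled way, so multiplicity of shortest paths in $G$ forces multiplicity of shortest paths in $G'$ between the same pair of endpoints.
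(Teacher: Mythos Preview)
Your proposal is correct and follows essentially the same approach as the paper: both invoke \cref{lem:no decontract} to pass to an edge-supergraph $G'$ on the same vertex set, use \cref{diam.bound} for the first inequality, and for the second part argue that a parade of order $\diam(G)+1$ in $G'$ would force $d_G(u,v)=\diam(G)$ between its endpoints, whence the hypothesis $\usp{G}\leq\diam(G)$ produces two distinct shortest $u$--$v$ paths in $G$ that persist in $G'$ and destroy uniqueness. The only cosmetic difference is that you phrase the second part as a proof by contradiction, while the paper argues directly that $\usp{G'}\leq\diam(G)$; the underlying case analysis is identical.
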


\begin{proof}
By \cref{lem:no decontract}, there is a supergraph $G'$ of $G$ such that $\uspcf{G}=\uspc{G'}$ and $|G|=|G'|$. Since $G'$ is obtained from $G$ by adding edges but no vertices, we have $\diam(G')\leq\diam(G)$.

Recalling \cref{diam.bound}, we have $\uspcf{G}=\uspc{G'}\geq|G'|-\diam(G')-1\geq|G|-\diam(G)-1$. 

Now, suppose $\usp{G}\leq\diam(G)$. Consider a parade in $G'$, and let $u$ and $v$ be its endpoints. We will show that the length of this parade in $G'$ is at most $\diam(G)-1$.

If the distance between $u$ and $v$ in $G$ is at most $\diam(G)-1$, then the distance between $u$ and $v$ in $G'$ must be at most $\diam(G)-1$ also. On the other hand, consider the case that the distance in $G$ between $u$ and $v$ is $\diam(G)$. Since $\usp{G}\leq\diam(G)$, every parade in $G$ has at most $\diam(G)$ vertices, which implies that every parade in $G$ has length at most $\diam(G)-1$. Thus, the shortest paths joining $u$ and $v$ in $G$ are not unique. These paths are all still present in $G'$. Thus, since $u$ and $v$ are joined by a unique shortest path in $G'$, this path must have length at most $\diam(G)-1$.

Thus, we have $\usp{G'}\leq\diam(G)$, implying that $\uspcf{G}=\uspc{G'}\geq|G'|-\diam(G)=|G|-\diam(G)$.
\end{proof}

\section{Graphs of Spectator Floor 0, 1, and 2}
\label{sec:minimal}
If $A$ is the adjacency matrix of a graph $G$ and $k$ is a nonnegative integer, then the $(i, j)$-entry of $A^k$ counts the number of distinct walks of length exactly $k$ from vertex $i$ to vertex $j$ in $G$, including for example any paths of order $k + 1$.
%
%
%
%
%
This leads to the following efficient way of computing the parade number of a given graph.

\begin{obs}\label{USP-in-poly-time}
Let $G$ be a connected graph and let $k$ range from $0$ to $n$. Then \[\usp{G} =  1 + \max\{k: (A+2I)^k\text{ has a 1 in some entry}\}.\]
\end{obs}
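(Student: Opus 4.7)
The plan is to expand $(A+2I)^k$ entrywise as a weighted walk count and to read off precisely which entries equal $1$. Expanding the binomial (or, equivalently, counting walks in the auxiliary graph obtained by attaching a weight-$2$ self-loop at every vertex) gives
\[
\bigl((A+2I)^k\bigr)_{ij} \;=\; \sum_{m=0}^{k}\binom{k}{m}\, 2^{k-m}\, W_m(i,j),
\]
where $W_m(i,j)$ denotes the number of walks of length exactly $m$ from $i$ to $j$ in $G$. Combinatorially, a length-$k$ walk in the augmented graph is a length-$m$ walk in $G$ together with $k-m$ ``wait'' steps inserted among the $k$ positions, with each wait step contributing a factor of $2$.

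Next I would characterize when this entry equals $1$. Every summand is a nonnegative integer, so the cases split cleanly. If $i=j$, the term $m=0$ alone contributes $2^k$, so the entry equals $1$ precisely when $k=0$; this corresponds to the trivial unique shortest path of order $1$. If $i\neq j$, the $m=0$ term vanishes, and for each $1\le m<k$ the coefficient $\binom{k}{m}2^{k-m}$ is at least $2$; hence the total being $1$ forces $W_m(i,j)=0$ for all $m<k$ and $W_k(i,j)=1$. That is, the shortest walk from $i$ to $j$ has length $k$ and is unique.

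The decisive combinatorial step is that a minimum-length walk between distinct vertices never repeats a vertex: if $v$ appeared at positions $s<t$, excising the subwalk from $s$ to $t$ would yield a strictly shorter walk with the same endpoints. Hence a unique minimum-length walk is in fact a path, and therefore a unique shortest path. Conversely, if $G$ contains a unique shortest path of length $k$ from $i$ to $j$, then every walk of length $\le k$ from $i$ to $j$ is a shortest walk, hence a path, hence equal to that unique shortest path; this matches the conditions $W_m(i,j)=0$ for $m<k$ and $W_k(i,j)=1$ exactly.

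Assembling the pieces, the set of $k\in\{0,1,\dots,n\}$ for which $(A+2I)^k$ has a $1$ in some entry is $\{0\}\cup\{k\ge 1 : G \text{ has a unique shortest path of length } k\}$; its maximum is therefore the length of a parade in $G$, namely $\usp{G}-1$, and adding $1$ produces the claimed formula. The upper bound $k\le n$ is harmless because no path in an $n$-vertex graph has more than $n-1$ edges. The only nonformal ingredient is the shortcutting argument identifying unique minimum-length walks with unique shortest paths; the rest is direct binomial bookkeeping.
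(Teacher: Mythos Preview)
Your argument is correct and follows the same approach the paper sketches: expand $(A+2I)^k$ by the binomial theorem as a weighted accumulation of walk counts, observe that the coefficient on $A^m$ for $m<k$ is at least $2$ so that an entry equal to $1$ forces a unique shortest walk of length $k$, and then use the shortcutting argument to equate unique shortest walks with unique shortest paths. Your write-up is simply a more explicit version of the paper's brief justification, including the careful treatment of the diagonal case $k=0$ that the paper handles via the $K_1$ example.
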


We use $(A + 2I)^k$ rather than $A^k$ in order to include a contribution of at least $2A^j$ for all $j < k$, which ensures that an entry equal to $1$ represents not just a unique walk but a unique shortest walk, and therefore a unique shortest path. The calculation terminates once every entry is strictly greater than $1$. Recall that the binomial expansion of $(A+2I)^k$ includes all powers of $A$ from $A^0=I$ through $A^k$ so the $(i,j)$-entry of $(A+2I)^k$ is a weighted accumulation of the number of walks from $i$ to $j$ of length at most $k$. The need of $2I$ rather than $I$ can be seen by considering $K_1$; the parade number of $K_1$ is one but the adjacency matrix is $[0]$ and $([0] + I)^k = [1]$ for all $k$. Note that the choice of the multiplier 2 is arbitrary, any integer greater than one is sufficient.

    \textcolor{black}{The discussion above, along with Theorem \ref{lem:no decontract} and Corollary \ref{cor:components} led to algorithms which} were implemented in a SageMath \cite{sagemath} program to calculate the spectator floor of simple graphs and to determine which simple graphs are minor-minimal, with the code available at \cite{spec_floor_github_repo}. \textcolor{black}{These investigations helped lead to the characterizations in Corollary \ref{cor:min-multi-1}, Proposition \ref{prop:minimaml-simple}, and Theorem \ref{thm:min-multi-2}. However, these algorithms became very slow for graphs with large numbers of vertices. Therefore, an analytic approach was still needed.}

In \textcolor{black}{the remainder of} this section, we give the complete list of minor-minimal graphs of spectator floor 0, 1, and 2.
Along the way, we characterize those graphs with $\uspc{G} > 0$, $\uspc{G} > 1$, and $\uspc{G} > 2$, allowing quick recognition of such graphs.

We begin with graphs of spectator floor 0.



\begin{prop}
\label{lem:path floor}
Let $G$ be a graph. Then $\uspcf{G}=0$ if and only if $G$ is a disjoint union of paths.
\end{prop}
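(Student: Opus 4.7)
The plan is to prove both directions by first reducing to the connected case via \cref{cor:components} and then invoking \cref{lem:no decontract} to push the supergraph down to the same vertex set.

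For the easy direction ($\Leftarrow$), suppose $G$ is a disjoint union of paths $P_{n_1}\sqcup\cdots\sqcup P_{n_k}$. Each path $P_{n_i}$ is itself a unique shortest path (on all its vertices) between its endpoints, so $\uspc{P_{n_i}}=0$, which forces $\uspcf{P_{n_i}}=0$ since the spectator floor is a nonnegative integer bounded above by the spectator number. Then \cref{cor:components} gives $\uspcf{G}=0$.

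For the hard direction ($\Rightarrow$), suppose $\uspcf{G}=0$. By \cref{cor:components}, each connected component has spectator floor $0$, so I may assume $G$ is connected. By \cref{lem:no decontract}, there is a graph $G'$ with $V(G')=V(G)$, $G\subseteq G'$, and $\uspc{G'}=0$. The key claim is that $G'$ must be the path $P_n$ on $n=|V(G)|$ vertices. Indeed, $\uspc{G'}=0$ means $\usp{G'}=n$, so there is a unique shortest path $P$ in $G'$ whose vertex set is all of $V(G')$; in particular $P$ has length $n-1$ and is Hamiltonian. If $G'$ contained any additional edge $e=v_iv_j$ with $j\geq i+2$ (indexing along $P$), then rerouting through $e$ would yield a path between the endpoints of $P$ of length strictly less than $n-1$, contradicting that $P$ is a shortest path. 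Hence $E(G')=E(P)$ and $G'\cong P_n$.

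Having shown $G'$ is a path, $G$ is a connected subgraph of a path, which is itself a subpath (hence a path). Thus $G$ is a path, completing the proof. The only real obstacle is the claim that $G'$ must be a path, but the chord argument above handles it immediately once the equality $\usp{G'}=n$ is exploited, so the whole proof should come out in just a few lines.
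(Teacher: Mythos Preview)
Your proof is correct, with one small omission: in the ($\Rightarrow$) direction, when you argue that $G'$ must equal the path $P$, you only rule out chords $v_iv_j$ with $j\geq i+2$. Since the paper works in the multigraph setting, you must also rule out a second edge parallel to some $v_iv_{i+1}$; but this is immediate, since such a parallel edge would give a second $(v_1,v_n)$-path of the same length as $P$, contradicting uniqueness.

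The approach, however, differs from the paper's. For ($\Leftarrow$) the paper gives a direct construction: concatenate the component paths into one long path $G'$ by adding bridging edges between successive endpoints, and observe $\uspc{G'}=0$. You instead invoke \cref{cor:components} to sum over components. For ($\Rightarrow$) the paper works straight from the definition of the spectator floor: any witness $G'$ with $G$ a minor of $G'$ and $\uspc{G'}=0$ must itself be a path (by the same chord/parallel-edge argument), and then any minor of a path is a disjoint union of paths. You instead first reduce to the connected case via \cref{cor:components} and then invoke \cref{lem:no decontract} to force the witness $G'$ onto the same vertex set as $G$, finishing with ``a connected spanning subgraph of a path is the path itself.'' Your route is tidy once those two results are available, but it leans on substantially heavier machinery; the paper's argument is self-contained and would stand even if this proposition were moved ahead of Sections~\ref{Minor Operations and the Spectator Floor} and~\ref{Disconnected Graphs}.
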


\begin{proof}

By definition of $\uspc{G}$ and $\uspcf{G}$, it is clear that a path $P$ has $\uspc{P}=0$ and $\uspcf{P}=0$. 

First suppose $G$ is the disjoint union of paths $P_1,P_2,\ldots P_t$, and let the endpoints of $P_i$ be $x_i$ and $y_i$. Form a supergraph $G'$ of $G$ by adding edges joining $y_i$ and $x_{i+1}$, for $1\leq i\leq t-1$. Since $G'$ is a path, we have $\uspc{G'}=0$ and therefore $\uspcf{G}=0$.

Now, suppose $\uspcf{G}=0$. There must be a graph $G'$ such that $G$ is a minor of $G'$ and such that all vertices of $G'$ are contained in a unique shortest path $P$. Any edge added in parallel to an edge of $P$ causes $P$ to no longer be unique. If any other edge is added, it causes $P$ to not be a shortest path. Therefore, $G'$ must be a path. Since $G$ is a minor of a path, $G$ must be a disjoint union of paths.
\end{proof}

We now turn our attention to minor-minimal graphs with spectator floor $1$.  The first results is a corollary that follows from \cref{lem:path floor}.

\begin{cor}
\label{cor:min-multi-1}
The complete list of minor-minimal graphs with spectator floor $1$ is $C_2$ and $K_{1,3}$. The complete list of minor-minimal simple graphs with spectator floor $1$ is $K_3$ and $K_{1,3}$.
\end{cor}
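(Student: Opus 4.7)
The plan is to establish both directions of the classification, leaning on \cref{lem:path floor} to handle what appears as a proper minor. First I would verify that each of the four listed graphs is minor-minimal with spectator floor $1$. For $K_{1,3}$, this is immediate from \cref{cor:K1k+2} with $k=1$. For $C_2$ (two parallel edges on two vertices), the two vertices share no unique shortest edge, so $\usp{C_2}=1$ and $\uspc{C_2}=1$; any supergraph on the same vertex set can only add further parallel edges, which does not raise $\usp$, so \cref{lem:no decontract} yields $\uspcf{C_2}=1$. Every proper minor of $C_2$ is a subgraph of $K_2$, hence a disjoint union of paths, and \cref{lem:path floor} gives $\uspcf=0$ for each. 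For $K_3$ in the simple-graph category, a direct check shows $\usp{K_3}=2$ and $\uspc{K_3}=1$, supergraphs on three vertices only add parallel edges that do not help, and the only proper simple minors are disjoint unions of paths.

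Next I would prove completeness. Let $G$ be minor-minimal with $\uspcf{G}=1$. By \cref{cor:components}, if $G$ had two nontrivial connected components, one of them would already realize $\uspcf=1$ as a proper minor; and by \cref{lem:isolated}, any isolated vertex could be deleted without changing $\uspcf$. Both would violate minor-minimality, so $G$ must be connected with no isolated vertex. Since $\uspcf{G}=1\neq 0$, \cref{lem:path floor} tells us $G$ is not a disjoint union of paths, hence $G$ is either a cycle, has a vertex of degree at least $3$, or has parallel edges.

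I would then split into structural cases. If $G$ has a pair of parallel edges, then $C_2$ is a subgraph and hence a minor, so minor-minimality forces $G=C_2$. Otherwise $G$ is simple, and I subdivide: if $G$ has a vertex of degree at least $3$, then (because $G$ is simple) that vertex has three distinct neighbors and $K_{1,3}$ is a subgraph, so minor-minimality forces $G=K_{1,3}$. Finally, if $G$ is simple with maximum degree at most $2$, then being connected and not a path forces $G\cong C_k$ for some $k\geq 3$. In the multigraph classification, contracting any edge of $C_k$ produces $C_{k-1}$ for $k\geq 4$ and $C_2$ for $k=3$, so $C_2$ is a proper minor whenever $k\geq 3$; since a simple $G$ cannot equal $C_2$, this case is vacuous and contributes nothing to the multigraph list. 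In the simple-graph classification, for $k\geq 4$ no edge of $C_k$ lies in a triangle, so contractions are legal and iterate to reduce $C_k$ to $K_3$; by minor-minimality, $G=K_3$.

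The step I expect to require the most care is the case of an underlying cycle, where the contraction rules differ between the multigraph and simple-graph minor categories, and one must verify that the chosen sequence of contractions is legal in the appropriate category. A secondary obstacle is justifying the supergraph analyses for $C_2$ and $K_3$ rigorously via \cref{lem:no decontract}, to ensure we truly have $\uspcf=1$ rather than merely $\uspc=1$.
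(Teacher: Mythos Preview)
Your proposal is correct and follows essentially the same approach as the paper: use \cref{lem:path floor} to characterize spectator floor $0$, then observe that any graph failing to be a disjoint union of paths must either contain a cycle (yielding $C_2$ or $K_3$ as a minor) or have a vertex of degree at least $3$ (yielding $K_{1,3}$). The paper's own proof is considerably terser---it omits the explicit verification that $C_2$, $K_3$, and $K_{1,3}$ are themselves minor-minimal with spectator floor $1$, and it does not spell out the reduction to the connected case---so your more careful treatment (invoking \cref{cor:K1k+2}, \cref{lem:no decontract}, \cref{cor:components}, and \cref{lem:isolated}) fills in details the paper leaves implicit, but the underlying argument is the same.
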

\begin{proof}
For the first statement, if $G$ does not have spectator floor 0, then by \cref{lem:path floor} $G$ is not a disjoint union of paths, and so $G$ either has a vertex of degree at least $3$, or $G$ contains a cycle.  If $G$ has a vertex of degree at least $3$, then it contains $K_{1,3}$ as a subgraph; if $G$ contains a cycle, then $G$ con be contracted to $C_2$ (for multigraphs) or $K_3$ (for simple graphs).
\end{proof}

Next we begin our investigation of minor-minimal graphs with spectator floor 2.  The following lemma will be used later to prove that certain graphs have spectator floor 2.

\begin{lemma}
\label{lem:figures}
All of the graphs in \cref{fig:sharevertex,fig:shareedges,fig:one-cycle,fig:one-cycle2} have spectator floor $1$.
\end{lemma}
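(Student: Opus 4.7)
The plan is to show, for each graph $G$ depicted in the four referenced figures, that $\uspcf{G} = 1$ by establishing the two inequalities $\uspcf{G} \ge 1$ and $\uspcf{G} \le 1$ separately.

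For the lower bound, \cref{lem:path floor} characterizes the graphs of spectator floor $0$ as precisely the disjoint unions of paths. Each graph in the figures contains a cycle or a vertex of degree at least three (indeed, the figures are named for shared vertices, shared edges, and graphs with one cycle), and so none is a disjoint union of paths. Hence $\uspcf{G} \ge 1$ for each graph in question, and no further work is needed on this side.

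For the upper bound, by \cref{lem:no decontract} it suffices to exhibit, for each graph $G$ in the figures, a supergraph $G'$ on the same vertex set (obtained by adding edges only, with no decontractions or new vertices) such that $\uspc{G'} \le 1$. Equivalently, I must designate one vertex $v^{\star}$ of $G$ to serve as the lone spectator, order the remaining $n-1$ vertices as $v_1, v_2, \ldots, v_{n-1}$, and add a (possibly empty) set of chords so that $v_1 v_2 \cdots v_{n-1}$ is the unique shortest $v_1$-to-$v_{n-1}$ path in $G'$. I would proceed case by case through the figures. In each case, I would first identify a Hamiltonian-minus-one ordering suggested by the structure: when a cycle is present, the spectator is most naturally chosen to lie on the cycle so that the Hamiltonian path threads the cycle along a single arc; when two substructures share a vertex or an edge, the spectator is most naturally chosen so that the path traverses the remaining piece and then passes through the shared portion into the other piece.

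The main obstacle is guaranteeing \emph{uniqueness} of the shortest path after augmentation: each added chord is risky because it may either create a genuine shortcut (ruining the property that the chosen path is shortest) or create an alternative path of equal length (ruining uniqueness). The strategy is therefore to add chords only as forced: for any pair $(v_i, v_j)$ with $j > i + 1$ that is joined by a walk of length $j - i$ in $G$ disjoint from the chosen path (for instance, passing through $v^{\star}$ or around the cycle), insert an edge $v_i v_j$ as a shortcut that collapses that walk; iterating this process stabilizes at a $G'$ in which $v_1 v_2 \cdots v_{n-1}$ is genuinely the unique shortest path between its endpoints. Once the ordering and the chord set are fixed in each case, uniqueness can be verified either by direct inspection or by computing $(A + 2I)^{n-2}$ as in \cref{USP-in-poly-time} and checking that the entry indexed by the two endpoints equals $1$.
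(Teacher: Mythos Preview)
Your lower-bound argument via \cref{lem:path floor} is correct and matches the paper exactly.

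For the upper bound you are working harder than needed, and the chord-insertion heuristic you sketch does not do what you claim. Inserting an edge $v_i v_j$ with $j > i+1$ creates a strict shortcut for the very path $v_1 v_2 \cdots v_{n-1}$ you are trying to certify: once that chord is present, $v_1 \cdots v_i v_j \cdots v_{n-1}$ is shorter, so the full path can never again be a shortest path, let alone the unique one. Iterating only compounds the problem; the process cannot stabilize at a $G'$ with the stated property once any such chord has been added. For these particular four families the faulty step happens never to be invoked---three of them already satisfy $\uspc{G} = 1$ with the drawn horizontal path as parade, and the fourth (\cref{fig:one-cycle2}) needs only a single new \emph{consecutive} edge to route the parade around one endpoint of the doubled edge---so a careful case-by-case execution of your plan could succeed, but not via the mechanism you describe.

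The paper avoids all of this with one observation: ignoring labels, each graph in \cref{fig:shareedges,fig:one-cycle,fig:one-cycle2} is a subgraph (hence a minor) of some graph in the family of \cref{fig:sharevertex}, and that family visibly has spectator number $1$, since the bottom horizontal path is the unique shortest path between its endpoints with only the top vertex $v$ outside it. No augmentation, no appeal to \cref{lem:no decontract}, and no case analysis are required.
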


\begin{proof}
Consider the graphs in \cref{fig:sharevertex,fig:shareedges,fig:one-cycle,fig:one-cycle2}. (The vertex labels and captions will be used in the proof of \cref{thm:min-multi-2} below.) If we ignore the vertex labels, we note that all of these graphs are subgraphs of the graph in \cref{fig:sharevertex}. By  \cref{lem:path floor}, all of these graphs have spectator floor at least $1$. It is clear that the graph in \cref{fig:sharevertex} has spectator number $1$. Therefore, all of the graphs in \cref{fig:sharevertex,fig:shareedges,fig:one-cycle,fig:one-cycle2} have spectator floor at most $1$.
\end{proof}

\begin{figure}[!htbp]
	\centering
	\begin{subfigure}[b]{.47 \textwidth}
	\centering

	\begin{tikzpicture}[x=.9cm, y=1cm]
		\vertex[fill,inner sep=1pt,minimum size=1pt] (a) at (0,0){};
		\vertex[fill,inner sep=1pt,minimum size=1pt] (b) at (1,0){};
		\vertex[fill,inner sep=1pt,minimum size=1pt] (c) at (2,0){};
		\vertex[fill,inner sep=1pt,minimum size=1pt] (u) at (3,0) 	[label=below:$u$] {};
		\vertex[fill,inner sep=1pt,minimum size=1pt] (w) at (4,0)	[label=below:$w$] {};
		\vertex[fill,inner sep=1pt,minimum size=1pt] (d) at (5,0){};
		\vertex[fill,inner sep=1pt,minimum size=1pt] (e) at (6,0){};
		\vertex[fill,inner sep=1pt,minimum size=1pt] (f) at (7,0){};
		\vertex[fill,inner sep=1pt,minimum size=1pt] (v) at (3.5,1) 	[label=above:$v$] {};
		\path
		(a) edge (b)
		(c) edge (u)
		(u) edge (w)
		(w) edge (d)
		(e) edge (f)
		(u) edge[bend right=20] (v)
		(u) edge[bend left=20] (v)
		(w) edge[bend right=20] (v)
		(w) edge[bend left=20] (v);
		\draw (b) to (1.3,0); \draw (1.7,0) to (c);
		\draw [dotted,semithick] (1.4,0) -- (1.65,0);
		\draw (d) to (5.3,0); \draw (5.7,0) to (e);
		\draw [dotted,semithick] (5.4,0) -- (5.65,0);
	\end{tikzpicture}
	\caption{$G$ if two cycles share exactly one vertex}
	\label{fig:sharevertex}
		\end{subfigure}
\begin{subfigure}[b]{.47\textwidth}
	\centering
	\begin{tikzpicture}[x=.9cm, y=1cm]
		\vertex[fill,inner sep=1pt,minimum size=1pt] (a) at (0,0){};
		\vertex[fill,inner sep=1pt,minimum size=1pt] (b) at (1,0){};
		\vertex[fill,inner sep=1pt,minimum size=1pt] (c) at (2,0){};
		\vertex[fill,inner sep=1pt,minimum size=1pt] (u) at (3,0) 	[label=below:$u$] {};
		\vertex[fill,inner sep=1pt,minimum size=1pt] (v) at (4,0)	[label=below:$v$] {};
		\vertex[fill,inner sep=1pt,minimum size=1pt] (d) at (5,0){};
		\vertex[fill,inner sep=1pt,minimum size=1pt] (e) at (6,0){};
		\vertex[fill,inner sep=1pt,minimum size=1pt] (f) at (7,0){};
		\vertex[fill,inner sep=1pt,minimum size=1pt] (w) at (3.5,1) 	[label=above:$w$] {};
		\path
		(a) edge (b)
		(c) edge (u)
		(u) edge (v)
		(w) edge (v)
		(v) edge (d)
		(e) edge (f)
		(u) edge[bend right=20] (w)
		(u) edge[bend left=20] (w);
		\draw (b) to (1.3,0); \draw (1.7,0) to (c);
		\draw [dotted,semithick] (1.4,0) -- (1.65,0);
		\draw (d) to (5.3,0); \draw (5.7,0) to (e);
		\draw [dotted,semithick] (5.4,0) -- (5.65,0);
	\end{tikzpicture}
	\caption{$G$ if it has at least two cycles}
	\label{fig:shareedges}
\end{subfigure}

\begin{subfigure}[b]{.47\textwidth}
	\centering
	\begin{tikzpicture}[x=.9cm, y=1cm]
		\vertex[fill,inner sep=1pt,minimum size=1pt] (a) at (0,0){};
		\vertex[fill,inner sep=1pt,minimum size=1pt] (b) at (1,0){};
		\vertex[fill,inner sep=1pt,minimum size=1pt] (c) at (2,0){};
		\vertex[fill,inner sep=1pt,minimum size=1pt] (u) at (3,0) 	[label=below:$u$] {};
		\vertex[fill,inner sep=1pt,minimum size=1pt] (d) at (4,0){};
		\vertex[fill,inner sep=1pt,minimum size=1pt] (e) at (5,0){};
		\vertex[fill,inner sep=1pt,minimum size=1pt] (f) at (6,0){};
		\vertex[fill,inner sep=1pt,minimum size=1pt] (v) at (3,1) 	[label=above:$v$] {};
		\path
		(a) edge (b)
		(c) edge (u)
		(u) edge (d)
		(e) edge (f)
		(u) edge[bend right=20] (v)
		(u) edge[bend left=20] (v);
		\draw (b) to (1.3,0); \draw (1.7,0) to (c);
		\draw [dotted,semithick] (1.4,0) -- (1.65,0);
		\draw (d) to (4.3,0); \draw (4.7,0) to (e);
		\draw [dotted,semithick] (4.4,0) -- (4.65,0);
	\end{tikzpicture}
	\caption{One possibility if $G$ has exactly one cycle}
	\label{fig:one-cycle}
\end{subfigure}
\begin{subfigure}[b]{.47\textwidth}
	\centering
	\begin{tikzpicture}[x=.9cm, y=1cm]
		\vertex[fill,inner sep=1pt,minimum size=1pt] (a) at (0,0){};
		\vertex[fill,inner sep=1pt,minimum size=1pt] (b) at (1,0){};
		\vertex[fill,inner sep=1pt,minimum size=1pt] (c) at (2,0){};
		\vertex[fill,inner sep=1pt,minimum size=1pt] (u) at (3,0) 	[label=below:$u$] {};
		\vertex[fill,inner sep=1pt,minimum size=1pt] (v) at (4,0)	[label=below:$v$] {};
		\vertex[fill,inner sep=1pt,minimum size=1pt] (d) at (5,0){};
		\vertex[fill,inner sep=1pt,minimum size=1pt] (e) at (6,0){};
		\vertex[fill,inner sep=1pt,minimum size=1pt] (f) at (7,0){};
		\path
		(a) edge (b)
		(c) edge (u)
		(v) edge (d)
		(e) edge (f)
		(u) edge[bend right=20] (v)
		(u) edge[bend left=20] (v);
		\draw (b) to (1.3,0); \draw (1.7,0) to (c);
		\draw [dotted,semithick] (1.4,0) -- (1.65,0);
		\draw (d) to (5.3,0); \draw (5.7,0) to (e);
		\draw [dotted,semithick] (5.4,0) -- (5.65,0);
	\end{tikzpicture}
	\caption{Another possibility if $G$ has exactly one cycle}
	\label{fig:one-cycle2}
\end{subfigure}
\caption{Some graphs with spectator floor 1}
\label{fig:specfloor1}
\end{figure}
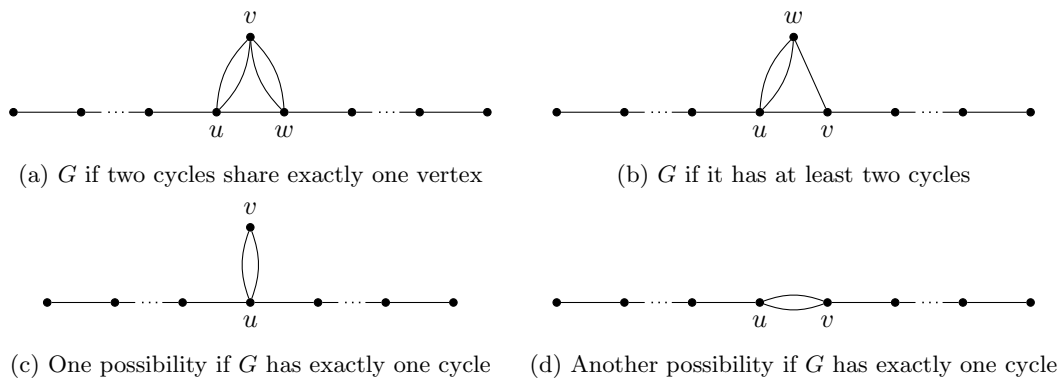

Our first pair of results for spectator floor 2 are concerning simple graphs.  The lemma below gives a list of minor-minimal simple graphs with spectator floor 2, and in the following \cref{prop:minimaml-simple} we will also show that this is the complete list of such graphs.

\begin{lemma}
\label{lem:simple-minimal}
The following graphs are minor-minimal among simple graphs with spectator floor $2$: $K_3\sqcup K_3$, $K_3\sqcup K_{1,3}$, $K_{1,3}\sqcup K_{1,3}$, $C_4$, $K_{1,4}$, the $3$-sun (see \cref{fig:3sun}), and the long $Y$ (see \cref{fig:longY}). 
\end{lemma}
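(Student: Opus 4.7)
The plan is to verify two conditions for each of the seven graphs $G$ on the list: that $\uspcf(G)=2$, and that every elementary minor $H$ of $G$ satisfies $\uspcf(H)\le 1$. Since the spectator floor is minor-monotone, these two conditions together characterize $G$ as minor-minimal with spectator floor $2$, so it suffices to check elementary minors rather than all proper minors.

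For $K_{1,4}$, both conditions are immediate from \cref{cor:K1k+2} with $k=2$. For the three disjoint unions $K_3\sqcup K_3$, $K_3\sqcup K_{1,3}$, and $K_{1,3}\sqcup K_{1,3}$, the value $\uspcf(G)=2$ follows from \cref{cor:components} combined with $\uspcf(K_3)=\uspcf(K_{1,3})=1$ (\cref{cor:min-multi-1}). The elementary minors are handled componentwise, replacing one component by one of its elementary minors while leaving the other fixed: $K_3$ admits no simple-graph contraction because every edge lies in a triangle, and its only simple-graph edge deletion yields $P_3$; the star $K_{1,3}$ yields $P_3\sqcup K_1$ under edge deletion and $P_3$ under edge contraction. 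In each case the modified component is a disjoint union of paths, hence has $\uspcf=0$ by \cref{lem:path floor}, and additivity yields $\uspcf(H)=0+1=1$.

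For $C_4$, the two antipodal vertices are joined by two internally disjoint length-$2$ paths, so every unique shortest path has order $2$ and $\uspc(C_4)=2$. Since $\usp(C_4)=2=\diam(C_4)$, the second clause of \cref{lem:diameter} gives $\uspcf(C_4)\ge |V(C_4)|-\diam(C_4)=2$, matching $\uspcf(C_4)\le\uspc(C_4)=2$. The elementary minors of $C_4$ fall into two isomorphism classes, $P_4$ by edge deletion (with $\uspcf=0$) and $K_3$ by edge contraction (with $\uspcf=1$).

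For the $3$-sun (a $K_3$ together with a pendant edge attached to each of its vertices), the path from one pendant vertex through the core to a second pendant vertex is a unique shortest path of order $4$, so $\uspc=2$, while $\diam=3$ gives the matching lower bound $\uspcf\ge |V|-\diam-1=6-3-1=2$ via \cref{lem:diameter}. For the long $Y$ (the tree on $7$ vertices obtained from $K_{1,3}$ by subdividing each edge once), \cref{tree.uspcf} yields $\uspcf=7-4-1=2$. To complete the minor-minimality check, one enumerates elementary minors up to isomorphism. In the $3$-sun, only pendant edges may be contracted (core edges being in triangles); pendant contractions and pendant deletions both leave $K_3$ with two pendants, possibly together with an isolated vertex, and this graph has $\uspcf=1$ by direct computation together with additivity; core-edge deletion produces a tree on $6$ vertices of diameter $4$, whose spectator floor is $1$ by \cref{tree.uspcf}. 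In the long $Y$, any edge deletion yields a disjoint union of two trees (or a tree plus an isolated vertex) whose spectator floors sum to at most $1$ by \cref{cor:components} and \cref{tree.uspcf}, while any edge contraction yields a tree on $6$ vertices of diameter $4$, again with $\uspcf=1$. The main obstacle is simply the bookkeeping required to enumerate elementary minors up to isomorphism for the $3$-sun and long $Y$, but the small vertex counts make this tractable.
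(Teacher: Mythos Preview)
Your proof is correct and follows essentially the same approach as the paper. The only minor differences are presentational: for the long $Y$ the paper invokes \cref{cor:diam-paths} rather than enumerating elementary minors directly, and for the $3$-sun the paper observes that each elementary minor (after dropping isolated vertices or simplifying parallel edges) is a subgraph of the spectator-floor-$1$ graph in \cref{fig:sharevertex}, whereas you compute each case with \cref{tree.uspcf} or a direct spectator-number calculation.
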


\begin{figure}[!htbp]
\begin{subfigure}[b]{.47\textwidth}
\[\begin{tikzpicture}[x=1cm, y=1cm]
\vertex[fill,inner sep=1pt,minimum size=1pt] (a) at (0,0) 		[label=above:$$] {};
\vertex[fill,inner sep=1pt,minimum size=1pt] (b) at (1,0) 		[label=left:$$] {};
\vertex[fill,inner sep=1pt,minimum size=1pt] (c) at (2,0) 	[label=below:$$] {};
\vertex[fill,inner sep=1pt,minimum size=1pt] (d) at (3,0) 	[label=below:$$] {};
\vertex[fill,inner sep=1pt,minimum size=1pt] (e) at (1.5,1)	[label=above:$$] {};
\vertex[fill,inner sep=1pt,minimum size=1pt] (f) at (1.5,2) 	[label=below:$$] {};
\path
(a) edge (b)
(b) edge (c)
(c) edge (d)
(b) edge (e)
(c) edge (e)
(e) edge (f);
\end{tikzpicture}\]
\caption{The $3$-sun}
\label{fig:3sun}
\end{subfigure}
\begin{subfigure}[b]{.47\textwidth}
\[\begin{tikzpicture}[x=1cm, y=1cm]
\vertex[fill,inner sep=1pt,minimum size=1pt] (a) at (0,0) 		[label=above:$$] {};
\vertex[fill,inner sep=1pt,minimum size=1pt] (b) at (0,-1) 		[label=left:$$] {};
\vertex[fill,inner sep=1pt,minimum size=1pt] (c) at (0,-2) 	[label=below:$$] {};
\vertex[fill,inner sep=1pt,minimum size=1pt] (d) at (-.7,.7) 	[label=below:$$] {};
\vertex[fill,inner sep=1pt,minimum size=1pt] (e) at (-1.4,1.4)	[label=above:$$] {};
\vertex[fill,inner sep=1pt,minimum size=1pt] (f) at (.7,.7) 	[label=below:$$] {};
\vertex[fill,inner sep=1pt,minimum size=1pt] (g) at (1.4,1.4)	[label=above:] {};
\path
(a) edge (b)
(b) edge (c)
(a) edge (d)
(d) edge (e)
(a) edge (f)
(f) edge (g);
\end{tikzpicture}\]
\caption{The long $Y$}
\label{fig:longY}
\end{subfigure}
\caption{Two graphs from \cref{lem:simple-minimal}}

\end{figure}

\begin{proof}
It follows from \cref{prop:components} that a disconnected graph is minor-minimal among simple graphs with spectator floor $2$ if and only if it has exactly two components, each of which have spectator floor $1$. Therefore, by \cref{cor:min-multi-1}, $K_3\sqcup K_3$, $K_3\sqcup K_{1,3}$, $K_{1,3}\sqcup K_{1,3}$ are all minor-minimal among simple graphs with spectator floor $2$.

The fact that $K_{1,4}$ is minor-minimal among simple graphs with spectator floor $2$ follows directly from \cref{cor:K1k+2}. It follows from \cref{tree.uspcf,cor:diam-paths} that the long $Y$ is also minor-minimal among simple graphs with spectator floor $2$.

Note that $\usp{C_4}=\diam(C_4)=2$. By the second statement of \cref{lem:diameter}, we have $\uspcf{C_4}\geq4-2=2$. Since $\uspcf{C_4}\geq\usp{C_4}=2$, we have $\uspcf{C_4}=2$. To see that $C_4$ is minor-minimal, note that deletion of any edge, results in a path, which has spectator floor $0$ and that contraction of any edge results in a triangle, which has spectator floor $1$.

Let $G$ be the $3$-sun, and note that $|G|=6$ and $\diam(G)=3$. By the first statement in \cref{lem:diameter}, we have $\uspcf{G}\geq6-3-1=2$. Since $\uspcf{G}\leq\uspc{G}=2$, we have $\uspcf{G}=2$.

To see that the $3$-sun is minor-minimal, first recall from \cref{lem:isolated} that isolated vertices have no effect on the spectator floor of a graph. If we delete any edge from the $3$-sun and then disregard any isolated vertices that may result, we obtain a subgraph of a graph of the form given in \cref{fig:sharevertex}. Thus, deletion of any edge of the $3$-sun results in a graph with spectator floor $1$.

We now consider the effect of contracting an edge from the $3$-sun. Since we want to show that the $3$-sun is minor-minimal among simple graphs, we should immediately simplify once if any parallel edges result from the contraction. One can easily check that, if any edge is contracted and the resulting graph is simplified, then the result is a subgraph of a graph of the form given in \cref{fig:sharevertex}. Thus, the resulting graph has spectator floor $1$.
\end{proof}

We are now prepared to show that the list of graphs in \cref{lem:simple-minimal} is in fact the complete list of minor-minimal simple graphs with spectator floor 2.

\begin{prop}
\label{prop:minimaml-simple}
The complete list of minor-minimal (simple) graphs with spectator floor $2$ is $K_3\sqcup K_3$, $K_3\sqcup K_{1,3}$, $K_{1,3}\sqcup K_{1,3}$, $C_4$, $K_{1,4}$, the long $Y$, and the $3$-sun.
\end{prop}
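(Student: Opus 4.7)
The plan is to establish the reverse containment: any minor-minimal simple graph $G$ with $\uspcf{G}=2$ appears on the list, since \cref{lem:simple-minimal} has already shown each listed graph is minor-minimal with spectator floor~$2$. I would split the analysis according to whether $G$ is disconnected, a tree, or a connected graph with a cycle.

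For the disconnected case, \cref{prop:components} forces the component spectator floors of $G$ to sum to~$2$. Minor-minimality rules out any component with spectator floor~$0$: by \cref{lem:path floor} such a component is a nonempty disjoint union of paths, and deleting an edge within it yields a proper minor of $G$ that still has spectator floor~$2$. Moreover, each remaining component must itself be minor-minimal for spectator floor~$1$, else we could replace it by one of its proper minors while preserving the total floor. Hence $G$ has exactly two components, each minor-minimal for spectator floor~$1$, and by \cref{cor:min-multi-1} each is $K_3$ or $K_{1,3}$, giving the three disconnected graphs in the list.

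For connected trees, \cref{tree.uspcf} gives $|G|=\diam(G)+3$, and \cref{cor:diam-paths} says no edge lies in every diametric path. A tree of odd diameter has a central edge lying in every diametric path, so $\diam(G)=2r$ is even, with a central vertex $c$. For the same reason $c$ must have at least three branches attaining the maximum depth~$r$; since each such branch contributes at least $r$ vertices, the count $3r+1\le 2r+3$ forces $r\le 2$. At $r=1$ the tree must have four leaves at $c$, giving $K_{1,4}$; at $r=2$ it must have exactly three length-$2$ paths at $c$, giving the long~$Y$.

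For connected $G$ containing a cycle, if some cycle of length $\ge 4$ appears as a subgraph, contracting that cycle down to length four realizes $C_4$ as a minor, and minor-minimality forces $G=C_4$. Otherwise every cycle of $G$ is a triangle. A vertex of degree $\ge 4$ would exhibit $K_{1,4}$ as a minor (take the vertex with four of its neighbors and delete any edges among those neighbors), so $G$ has maximum degree at most~$3$; two triangles sharing an edge would produce a $4$-cycle, and two triangles sharing only a vertex would give that vertex degree~$\ge 4$, so any two triangles of $G$ are vertex-disjoint. If $G$ contained two vertex-disjoint triangles $T_1,T_2$, contracting a connecting path in $G$ into a single vertex would (after deleting the two triangle edges among the four remaining triangle vertices) exhibit $K_{1,4}$ as a minor, so $G$ has exactly one triangle $T=\{u,v,w\}$, and $G$ is $T$ with pendant trees hanging off $u,v,w$. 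The maximum-degree condition forces each pendant tree to be a (possibly empty) path, and a short computation using \cref{diam.bound} or \cref{lem:diameter} (for the spectator-floor lower bound) together with \cref{lem:figures} (for spectator-floor-$1$ upper bounds) shows that with fewer than three pendants $\uspcf{G}\le 1$, while any pendant of length $\ge 2$ can be contracted toward the triangle to exhibit the $3$-sun as a proper minor. Thus $G$ has exactly three pendants each of length one, and $G$ is the $3$-sun. The main obstacle is this final triangle-only case, which requires carefully ruling out every near-miss configuration---branched or extended pendants and multi-triangle cactuses---by producing either $K_{1,4}$ or the $3$-sun as a proper minor, while respecting the simple-graph convention that a contracted edge may not lie in a triangle.
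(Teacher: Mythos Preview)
Your plan is close to the paper's and mostly sound, but there is one genuine gap in the connected-with-a-cycle case. The sentence ``The maximum-degree condition forces each pendant tree to be a (possibly empty) path'' is not correct as written: knowing only that $G$ has maximum degree $\le 3$ still allows a pendant tree $T_u$ to contain a degree-$3$ vertex $z\neq u$ (so $T_u$ could itself look like a long $Y$, say). What is actually needed is the further observation that any such $z$ already yields $K_{1,4}$ as a proper minor: contract the unique $z$--$u$ path in $T_u$ (every edge on it avoids the unique triangle $\{u,v,w\}$, so each contraction is legal in the simple-graph sense), and the resulting vertex is adjacent to $v,w$ and to the two off-path neighbours of $z$. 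The paper handles this via the line ``If $G$ has two vertices of degree $3$ that are not contained in a triangle, then by contracting a path joining the vertices, we obtain $K_{1,4}$ as a minor''; you should make the analogous step explicit rather than hiding it behind ``maximum-degree condition''.

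Two smaller points. In the disconnected case, ``deleting an edge within it'' does not dispose of a $K_1$ component; delete the isolated vertex instead. For two vertex-disjoint triangles it is cleaner (and matches the paper) to note that $K_3\sqcup K_3$ is already a proper simple minor with spectator floor $2$, rather than manufacturing $K_{1,4}$; your route works, but only after you first delete all edges off $T_1\cup T_2\cup P$ so that no path edge sits in a triangle before you contract. Apart from these issues, your decision to treat trees separately via \cref{tree.uspcf} and \cref{cor:diam-paths}, rather than folding them into the degree-$3$ analysis as the paper does, is a perfectly legitimate variation.
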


\begin{proof}
Suppose for a contradiction that $G$ is a minor-minimal simple graph with spectator floor $2$. If $G$ is not connected, then by \cref{prop:components}, $G$ is the disjoint union of graphs $G_1$ and $G_2$, each with spectator floor $1$. The minor-minimal graphs with spectator floor $1$ are $K_3$ and $K_{1,3}$. Therefore, $G$ is $K_3\sqcup K_3$, $K_3\sqcup K_{1,3}$, or $K_{1,3}\sqcup K_{1,3}$. Thus, we may assume that $G$ is connected.

Since $G$ is minor-minimal, it does not contain $C_4$ or $K_{1,4}$ as a minor. Therefore, the maximum degree of $G$ is $3$, and $G$ has no cycle of length greater than $3$.

We now show that $G$ has at most one triangle. If $G$ has two disjoint triangles, then $G$ has $K_3\sqcup K_3$ as a minor. If two triangles of $G$ share exactly one vertex, then that vertex has degree $4$. If two triangles share an edge, then $G$ contains  $C_4$. Therefore, $G$ has at most one triangle.

If $G$ has two vertices of degree $3$ that are not contained in a triangle, then by contracting a path joining the vertices, we obtain $K_{1,4}$ as a minor. Thus, either

\begin{itemize}
\item[(i)] $G$ has exactly one triangle, and all vertices not in the triangle have degree $1$ or $2$, or
\item[(ii)] $G$ is a tree with at most one vertex of degree $3$, and all other vertices of $G$ have degree $1$ or $2$.
\end{itemize}

We first consider case (i). If every vertex of the triangle has degree $3$, then $G$ has the $3$-sun as a minor. Otherwise, $G$ consists of a path with one additional vertex forming a triangle with two vertices in the path. Then $\uspc{G}=1$, and we have a contradiction.

Now we consider case (ii). If $G$ has no vertex of degree $3$, then $G$ is a path, and $\uspc{G}=0$, a contradiction. Thus, we may assume that $G$ has exactly one vertex $v$ of degree $3$. If all three vertices adjacent to $v$ have degree $2$, then $G$ has the long $Y$ as a minor. Otherwise, $G$ consists of a path with one additional vertex adjacent to one vertex on the path. Then $\uspc{G}=1$, and we have a contradiction.
\end{proof}

Our next pair of results for spectator floor 2 are concerning the more general case of multigraphs.  We first need to define the graphs $H_1$ through $H_5$.

\begin{definition}
\label{def:minor-minimal}
 Let $H_1$ be obtained from $K_3$ by doubling every edge. Let $H_2$ be obtained from $K_{1,3}$ by doubling two of the edges. Let $H_3$ be the $1$-sum of $K_3$ and $C_2$, and let $H_4$ be obtained by contracting one edge of the triangle in the $3$-sun. Let $H_5$ be the graph shown in \cref{fig:rocket}.
\begin{figure}[!htbp]
\[\begin{tikzpicture}[x=1cm, y=1cm]
\vertex[fill,inner sep=1pt,minimum size=1pt] (a) at (0,0.5){};
\vertex[fill,inner sep=1pt,minimum size=1pt] (b) at (0,1.5){};
\vertex[fill,inner sep=1pt,minimum size=1pt] (c) at (1,1){};
\vertex[fill,inner sep=1pt,minimum size=1pt] (d) at (2,1){};
\vertex[fill,inner sep=1pt,minimum size=1pt] (e) at (3,1){};
\path
(a) edge (c)
(b) edge (c)
(d) edge (c)
(e) edge[bend right=20] (d)
(e) edge[bend left=20] (d);
\end{tikzpicture}\]
\caption{$H_5$}
\label{fig:rocket}
\end{figure}
\end{definition}

The lemma below gives a list of minor-minimal (multi)graphs with spectator floor 2, and in the following \cref{thm:min-multi-2} we will also show that this is the complete list of such graphs.

\begin{lemma}
\label{lem:multi-minimal}
The following graphs are all minor-minimal among graphs with spectator floor $2$: $C_2\sqcup C_2$, $C_2\sqcup K_{1,3}$, $K_{1,3}\sqcup K_{1,3}$, $C_4$, $H_1$, $H_2$, $H_3$, $H_4$, $H_5$, $K_{1,4}$, and the long $Y$.
\end{lemma}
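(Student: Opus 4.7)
The plan is to prove \cref{lem:multi-minimal} by verifying, for each of the eleven listed graphs $G$, two things: (i) that $\uspcf{G}=2$, and (ii) that every elementary minor of $G$ has spectator floor at most $1$. Because the spectator floor is minor-monotone, (ii) implies that no proper minor of $G$ achieves value $2$, which establishes minor-minimality.

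For (i), the three disconnected graphs are immediate from \cref{prop:components} combined with \cref{cor:min-multi-1}: each component has spectator floor $1$, and these sum. The trees $K_{1,4}$ and the long $Y$ satisfy $\uspcf{T}=|T|-\diam(T)-1=2$ by \cref{tree.uspcf}, and $C_4$ is already handled in \cref{lem:simple-minimal}. For each of $H_1, H_2, H_3, H_4, H_5$, direct computation gives $\uspc{H_i}=2$ and checks that $\usp{H_i}=\diam(H_i)$ in each case, so the second conclusion of \cref{lem:diameter} yields $\uspcf{H_i}\ge |H_i|-\diam(H_i)=2$, matching the upper bound $\uspcf{H_i}\le\uspc{H_i}=2$.

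For (ii), the disconnected cases reduce to the observation that any minor of a disjoint union decomposes as a disjoint union of minors of the components; in a proper minor at least one component is strictly reduced, and since $C_2$ and $K_{1,3}$ are themselves minor-minimal with spectator floor $1$ by \cref{cor:min-multi-1}, their proper minors have spectator floor $0$, so the additivity of \cref{prop:components} caps the total at $1$. Minor-minimality of $K_{1,4}$ is \cref{cor:K1k+2} with $k=2$. For the long $Y$, one verifies that no single edge lies in the intersection of all three diametric paths (which pairwise join the leaves of the three length-two branches) and then applies \cref{cor:diam-paths}. The case $C_4$ is in \cref{lem:simple-minimal}.

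The main obstacle is the elementary-minor verification for $H_1$--$H_5$. For each $H_i$ one enumerates its edge deletions and its contractions of edges lacking a parallel; in $H_1$ every edge has a parallel, so only edge deletions arise, which already trims the case count. For each resulting graph $H'$ the aim is to show spectator floor equal to $1$. The lower bound $\uspcf{H'}\ge 1$ follows from \cref{lem:path floor} whenever $H'$ contains a cycle or a pair of parallel edges, which holds in every instance here. The upper bound $\uspcf{H'}\le 1$ is obtained in most cases by exhibiting $H'$ as a subgraph of one of the configurations in \cref{fig:sharevertex,fig:shareedges,fig:one-cycle,fig:one-cycle2} and invoking \cref{lem:figures} together with minor-monotonicity. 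The small number of minors that do not obviously embed in those figures---most notably the triangle-with-pendant graph obtained by deleting one of the parallel $cd$-edges of $H_3$---are handled by a short direct computation of $\uspc$, which together with \cref{lem:diameter} pins the spectator floor at $1$. Throughout, \cref{lem:isolated} lets us discard any isolated vertex produced by an edge deletion without affecting the spectator floor.
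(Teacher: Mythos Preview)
Your proposal is correct and follows essentially the same route as the paper's proof: additivity (\cref{prop:components}) plus \cref{cor:min-multi-1} for the disconnected cases, \cref{lem:diameter} for the value of $\uspcf{H_i}$, and embedding the elementary minors of the $H_i$ into the configurations of \cref{fig:specfloor1} via \cref{lem:figures}.

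Two small remarks. First, you do more work than needed on the elementary minors of the $H_i$: for minor-minimality you only need $\uspcf{H'}\le 1$, so the lower bound $\uspcf{H'}\ge 1$ from \cref{lem:path floor} is superfluous. Second, the ``exceptional'' triangle-with-pendant you single out (from deleting one of the parallel edges of $H_3$) in fact does embed in \cref{fig:sharevertex}: take the triangle as $\{u,v,w\}$ and the pendant as the next path vertex beyond $u$. The paper simply asserts that every single-edge deletion or contraction of $H_2,\dots,H_5$ lands inside \cref{fig:sharevertex}, without carving out special cases. Your direct-computation fallback is harmless, just unnecessary.

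For $K_{1,4}$ and the long $Y$ you invoke the tree-specific results \cref{cor:K1k+2} and \cref{cor:diam-paths} directly, whereas the paper routes these (together with $C_4$) through \cref{lem:simple-minimal} after observing that all of their elementary minors are simple graphs, so simple-graph minor-minimality already suffices in the multigraph category. Both arguments are fine; yours is arguably more direct for the two trees.
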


\begin{proof}
It follows from \cref{prop:components} that a disconnected graph is minor-minimal among simple graphs with spectator floor $2$ if and only if it has exactly two components, each of which have spectator floor $1$. Therefore, by \cref{cor:min-multi-1}, $C_2\sqcup C_2$, $C_2\sqcup K_{1,3}$, and $K_{1,3}\sqcup K_{1,3}$ are all minor-minimal among graphs with spectator floor $2$.

Every single-edge deletion and every single-edge contraction of $C_4$, $K_{1,4}$, and the long $Y$ is a simple graph. Therefore, \cref{lem:simple-minimal} implies that these graphs are minor-minimal among graphs with spectator floor $2$.

One can check that $\diam(H_1)=1$, that $\diam(H_2)=\diam(H_3)=2$, and that $\diam(H_4)=\diam(H_5)=3$. One can also check that $\usp{H_1}=1$, that $\usp{H_2}=\usp{H_3}=2$, and that $\usp{H_4}=\usp{H_5}=3$. It follows that, if $G\in\{H_1,H_2,H_3,H_4,H_5\}$, then $\uspc{G}=2$. Thus, $\uspcf{G}\leq2$. Moreover, since $\usp{G}=\diam(G)$, \cref{lem:diameter} implies that $\uspcf{G}\geq|G|-\diam(G)=2$. Therefore, $\uspcf{G}=2$.

To show that $H_1$, $H_2$, $H_3$, $H_4$, and $H_5$ are minor-minimal, we must show that every single-edge deletion and every single-edge contraction from each of these graphs is a graph with spectator number less than $2$. Since every edge of $H_1$ is in parallel with another edge, no edge can be contracted. If an edge is deleted from $H_1$, then the resulting graph is a subgraph of a graph of the form given in \cref{fig:sharevertex}, which has spectator number $1$. If $G\in\{H_2,H_3,H_4,H_5\}$, then one can check that every single-edge deletion and every single-edge contraction of $G$ is a subgraph of a graph of the form given in \cref{fig:sharevertex}, which has spectator number $1$.
\end{proof}

Next, we have a couple of technical lemmas that will support the proof of \cref{thm:min-multi-2}.

\begin{lemma}
\label{lem:max2edges}
Let $G$ be a graph such that two or more edges join vertices $v$ and $w$. If $e$ is one of these edges, then $\usp{G\backslash e}\geq \usp{G}$ and $\uspc{G\backslash e}\leq \uspc{G}$. Moreover, if three or more edges join vertices $v$ and $w$, then $\usp{G\backslash e}=\usp{G}$ and $\uspc{G\backslash e}=\uspc{G}$.
\end{lemma}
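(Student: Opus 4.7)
The plan is to exploit the fact, recalled in the definition of unique shortest path in the paper's introduction, that no USP in a multigraph can contain an edge with a parallel edge. Since two or more edges join $v$ and $w$ in $G$, the edge $e$ has a parallel edge in $G$, so no USP of $G$ can use $e$. This observation is the engine behind both halves of the lemma.

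For the first inequality, let $P$ be a parade in $G$, so that $|V(P)| = \usp{G}$. By the observation above, $e \notin E(P)$, so $P$ is still a path in $G \backslash e$. To argue $P$ is a USP in $G\backslash e$, note that every path in $G\backslash e$ between the endpoints of $P$ is also a path in $G$ between those endpoints, so by the USP property of $P$ in $G$, no such path has length shorter than $P$, and any such path of equal length must equal $P$. Thus $\usp{G\backslash e} \geq |V(P)| = \usp{G}$. Since $G\backslash e$ has the same vertex set as $G$, this immediately gives $\uspc{G\backslash e} = |G| - \usp{G\backslash e} \leq |G| - \usp{G} = \uspc{G}$.

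For the second statement, assume three or more edges join $v$ and $w$; it suffices to prove the reverse inequality $\usp{G\backslash e} \leq \usp{G}$. Let $P$ be a parade in $G\backslash e$, with endpoints $x$ and $y$. Since $G\backslash e$ still has at least two edges between $v$ and $w$, each such edge has a parallel edge in $G\backslash e$, and therefore $P$ does not use any of them. We will show $P$ is a USP in $G$. Take any path $Q$ in $G$ from $x$ to $y$. If $Q$ does not use $e$, then $Q$ is a path in $G\backslash e$, and the USP property of $P$ there gives either $|Q| > |P|$ or $Q = P$. If instead $Q$ uses $e$, pick any edge $e'\neq e$ parallel to $e$ in $G\backslash e$ (which exists because three or more such edges existed originally), and let $Q'$ be obtained from $Q$ by substituting $e'$ for $e$. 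Then $Q'$ is a path in $G\backslash e$ with $|Q'| = |Q|$. If $|Q'| \le |P|$ then the USP property forces $Q' = P$, but $Q'$ uses $e'$, which is in parallel with the remaining edge(s) between $v$ and $w$ in $G\backslash e$, so $P$ cannot use $e'$, a contradiction. Hence $|Q| = |Q'| > |P|$ in this case as well. Thus $P$ is the unique shortest path in $G$ from $x$ to $y$, giving $\usp{G} \geq |V(P)| = \usp{G\backslash e}$.

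The main obstacle is managing the substitution argument in the second half: the replacement path $Q'$ must live in $G\backslash e$ (not merely in $G$), and the substituted edge $e'$ must itself still have a parallel edge after the deletion, both of which are precisely what requires the hypothesis of three or more parallel edges rather than just two. Combining the two inequalities yields $\usp{G\backslash e} = \usp{G}$, and equality of the spectator numbers follows since $|V(G\backslash e)| = |V(G)|$.
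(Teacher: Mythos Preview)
Your proof is correct and follows essentially the same approach as the paper: both hinge on the observation that no unique shortest path can use an edge with a parallel edge, and use this to show that every unique shortest path of $G$ is a unique shortest path of $G\backslash e$ (with the sets coinciding when three or more edges join $v$ and $w$). The paper's version is slightly terser, phrasing the argument as a direct containment of the sets of unique shortest paths rather than via an explicit substitution $e \mapsto e'$, but the content is the same.
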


\begin{proof}
No unique shortest path in $G$ contains $e$ since there is at least one other edge in parallel with $e$. Therefore, every unique shortest path in $G$ is also a unique shortest path in $G\backslash e$.

The only case where a unique shortest path in $G\backslash e$ is not a unique shortest path in $G$ is if $G\backslash e$ has exactly one edge joining $v$ and $w$ and this path contains that edge.

Therefore, the set of unique shortest paths of $G$ is a subset of the set of unique shortest paths of $G\backslash e$, implying that $\usp{G\backslash e}\geq \usp{G}$ and $\uspc{G\backslash e}\leq \uspc{G}$.

Moreover, if three or more edges join $v$ and $w$ in $G$, then two or more edges join $v$ and $w$ in $G\backslash e$. Therefore, the set of unique shortest paths of $G$ is equal to the set of unique shortest paths of $G\backslash e$, implying that $\usp{G\backslash e}=\usp{G}$ and $\uspc{G\backslash e}=\uspc{G}$.
%
%
%
%
\end{proof}

\begin{lemma}
\label{lem:max2edgesfloor}
Let $G$ be a graph such that three or more edges join vertices $v$ and $w$. If $e$ is one of these edges, then $\uspcf{G\backslash e}=\uspcf{G}$.
\end{lemma}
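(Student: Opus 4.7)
The plan is to show each inequality separately, using the main structure theorem of Section~2 (\cref{lem:no decontract}) to reduce the upper-bound direction to an edge-addition argument.

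For the easy direction, note that $G\backslash e$ is a minor of $G$, so by the general fact that the minor-monotone floor of any graph parameter is itself minor-monotone, we obtain $\uspcf{G\backslash e}\le \uspcf{G}$. Concretely: any graph $H$ that contains $G$ as a minor also contains $G\backslash e$ as a minor, so the infimum defining $\uspcf{G\backslash e}$ is taken over a superset of graphs.

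For the reverse inequality $\uspcf{G}\le \uspcf{G\backslash e}$, the idea is to realize $\uspcf{G\backslash e}$ by a graph and then reinsert $e$ without cost. By \cref{lem:no decontract}, there exists a graph $G'$ on the same vertex set as $G\backslash e$ (hence the same vertex set as $G$) such that $G\backslash e$ is a subgraph of $G'$ and $\uspc{G'}=\uspcf{G\backslash e}$. Since $G$ has at least three parallel edges between $v$ and $w$, the graph $G\backslash e$ still has at least two such parallel edges, and so does its supergraph $G'$. Now let $G''$ be obtained from $G'$ by adding the edge $e$ back as an additional parallel edge between $v$ and $w$. Then $G$ is a subgraph of $G''$, so $G$ is a minor of $G''$, and hence $\uspcf{G}\le \uspc{G''}$.

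It remains to verify $\uspc{G''}=\uspc{G')}=\uspcf{G\backslash e}$. This is immediate from the already-established \cref{lem:max2edges}: the graph $G''$ has at least three parallel edges between $v$ and $w$ (the at least two coming from $G'$, plus the freshly added $e$), so deleting $e$ leaves the spectator number unchanged. Since $G''\backslash e = G'$, we conclude $\uspc{G''}=\uspc{G'}=\uspcf{G\backslash e}$, giving $\uspcf{G}\le \uspcf{G\backslash e}$.

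There is no real obstacle here beyond being careful with the bookkeeping: the role of the hypothesis ``three or more edges'' is precisely to guarantee that after removing $e$ from $G$ we still have at least two parallel copies, so the extremal realizer $G'$ inherits this multiplicity and reinserting $e$ falls under the hypothesis of \cref{lem:max2edges}. The argument would break down with only two parallel edges in $G$, because then $G'$ might have a single edge between $v$ and $w$ and adding $e$ would produce a double edge, yielding only the weaker inequality from \cref{lem:max2edges} rather than equality.
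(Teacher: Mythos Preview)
Your proof is correct and follows essentially the same approach as the paper's: both directions argue minor-monotonicity for the easy inequality, then invoke \cref{lem:no decontract} to realize $\uspcf{G\backslash e}$ by a supergraph, reinsert the edge $e$, and apply the ``three or more edges'' clause of \cref{lem:max2edges} to conclude the spectator number is unchanged. Aside from a stray parenthesis in ``$\uspc{G')}$'', your write-up is slightly more explicit than the paper's about which clause of \cref{lem:max2edges} is being used and why the hypothesis of three edges is essential.
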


\begin{proof}
Since $G\backslash e$ is a minor of $G$, we have $\uspcf{G\backslash e}\leq\uspcf{G}$.

Now, let $H$ be a graph containing $G\backslash e$ as a minor such that $\uspcf{G\backslash e}=\uspc{H}$. By \cref{lem:no decontract}, we may assume that $G\backslash e$ is obtained from $H$ by deleting edges. Therefore, there are at least two edges joining $v$ and $w$ in $H$. Add an additional edge joining $v$ and $w$ in $H$ to form the graph $H^+$, which contains $G$ as a minor. By \cref{lem:max2edges}, we have $\uspc{H}=\uspc{H^+}$. We then have $\uspcf{G\backslash e}=\uspc{H}=\uspc{H^+}\geq\uspcf{G}$
\end{proof}

In our final result for this section, we are able to prove that the list of graphs in \cref{lem:multi-minimal} is in fact the complete list of minor-minimal (multi)graphs with spectator floor 2.

\begin{theorem}
\label{thm:min-multi-2}
The complete list of minor-minimal graphs with spectator floor $2$ is $C_2\sqcup C_2$, $C_2\sqcup K_{1,3}$, $K_{1,3}\sqcup K_{1,3}$, $C_4$, $H_1$, $H_2$, $H_3$, $H_4$, $H_5$, $K_{1,4}$, and the long $Y$.
\end{theorem}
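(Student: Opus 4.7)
The goal is to prove the converse of \cref{lem:multi-minimal}: every minor-minimal graph $G$ with $\uspcf{G}=2$ lies in the listed collection. The argument splits into a disconnected case, a simple connected case, and a non-simple connected case.

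For the disconnected case, \cref{lem:isolated} rules out isolated vertices in a minor-minimal $G$, while \cref{prop:components} together with additivity of $\uspcf{}$ forces $G = G_1 \sqcup G_2$ with each $G_i$ itself minor-minimal of spectator floor $1$. \Cref{cor:min-multi-1} then identifies $G_i \in \{C_2, K_{1,3}\}$, yielding exactly the three disjoint-union graphs on the list.

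Assume now $G$ is connected. By \cref{lem:max2edgesfloor} together with minor-minimality, every vertex-pair is joined by at most two edges. If $G$ is simple, then every simple-minor of $G$ is also a multigraph-minor, so $G$ is minor-minimal among simple graphs as well, and \cref{prop:minimaml-simple} offers seven candidates. Three of these ($K_3 \sqcup K_3$, $K_3 \sqcup K_{1,3}$, and the $3$-sun) contain a $K_3$ subgraph; contracting edges of such a triangle produces a proper multigraph-minor of spectator floor $2$, ruling them out. The four survivors $K_{1,3} \sqcup K_{1,3}$, $C_4$, $K_{1,4}$, and the long $Y$ all lie on the list.

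Finally, if $G$ is connected with at least one doubled edge, I would case-split on the number $d$ of doubled vertex-pairs. If $d \ge 3$, any two disjoint doubled pairs would supply $C_2 \sqcup C_2$ as a proper minor, so the doubled pairs pairwise share a vertex; they must then form either a triangle (forcing $G = H_1$, since any extra vertex or edge would reveal $H_1$ as a proper minor) or a star $K_{1,3}$ (yielding $H_2$ as a proper minor, a contradiction). If $d = 2$, the two doubled pairs share a common vertex and a short analysis forces $G = H_2$. The case $d = 1$ is the main obstacle: the simple-graph attachments at the doubled pair $\{v, w\}$ admit a priori many shapes, and the forbidden proper minors ($C_4$, $K_{1,4}$, the long $Y$, and the already-classified disconnected graphs), together with the diameter bound $\uspcf{G} \ge |G| - \diam(G) - 1$ from \cref{lem:diameter}, must be marshaled systematically to rule out every attachment other than those producing $H_3$, $H_4$, or $H_5$. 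This finite but intricate enumeration is the principal work of the proof.
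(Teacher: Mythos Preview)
Your outline is sound and reaches the right list, but the organization is genuinely different from the paper's. After the common reductions (disconnected case via \cref{prop:components} and \cref{cor:min-multi-1}; at most two parallel edges via \cref{lem:max2edgesfloor}), the paper does \emph{not} split on the number $d$ of doubled pairs. Instead it proves, through a chain of structural claims, that $G$ has no $C_4$-minor, no vertex with four neighbours, no two cycles sharing at most one vertex, and ultimately at most one cycle altogether; only then does it split into the tree, one-triangle, and one-$C_2$ subcases, invoking \cref{prop:minimaml-simple} for the first two. Your direct appeal to \cref{prop:minimaml-simple} whenever $G$ is simple is a clean shortcut and disposes of the $3$-sun exactly as the paper does (via the $H_4$ contraction).

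Two places where your sketch understates the work: first, the ``short analysis'' for $d=2$ is essentially the content of the paper's claim that no two cycles share exactly one vertex, and that subproof uses $H_1$, $H_2$, $H_4$, $H_5$, and $C_4$ as forbidden minors before concluding that $G$ is a subgraph of the \cref{fig:sharevertex} template (hence of spectator floor $1$). Second, your $d=1$ case must absorb more than tree-like attachments at the doubled pair: it must also handle the configuration where a triangle shares an edge with the $C_2$ (the situation treated in the paper's one-cycle claim), and it should list $H_3$, $H_4$, $H_5$ among the forbidden proper minors, not only as target outcomes. The total effort is comparable; the paper's cycle-count route just localizes the casework differently. (Minor slip: once you assume $G$ connected, the disconnected candidates from \cref{prop:minimaml-simple} are already out of play, so there is no need to ``rule out'' $K_3\sqcup K_3$ or $K_3\sqcup K_{1,3}$ in that branch.)
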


\begin{proof}
Let $G$ be a minor-minimal graph with spectator floor $2$, and suppose for a contradiction that $G$ is not one of the graphs given in the statement of the result. 
By \cref{lem:max2edgesfloor}, there are at most two edges joining each pair of vertices of $G$.

\begin{claim}
\label{connected}
$G$ is connected.
\end{claim}

\begin{subproof}
If $G$ is not connected, then by \cref{prop:components}, $G$ is the disjoint union of graphs $G_1$ and $G_2$, each with spectator floor $1$. By \cref{cor:min-multi-1}, the minor-minimal graphs with spectator floor $1$ are $C_2$ and $K_{1,3}$. Therefore, $G$ is $C_2\sqcup C_2$, $C_2\sqcup K_{1,3}$, or $K_{1,3}\sqcup K_{1,3}$, each of which are graphs given in the statement of the result.
\end{subproof}

Since $G$ does not contain $C_4$ as a minor, we have the following.
\begin{claim}
\label{cycle}
$G$ has no cycle of length at least $4$.
\end{claim}

Since $G$ does not contain $K_{1,4}$ as a minor, we have the following.
\begin{claim}
\label{neighborhood}
$G$ has no vertex with a neighborhood of cardinality at least $4$.
\end{claim}

\begin{claim}
\label{vertex}
$G$ has no pair of disjoint cycles and no pair of cycles that share exactly one vertex.
\end{claim}

\begin{subproof}
Since $C_2\sqcup C_2$ is not a minor of $G$, there is no pair of disjoint cycles in $G$. Since $H_3$ is not a minor of $G$, if two cycles share exactly one vertex, both cycles must have length $2$.

Now, suppose for a contradiction that $G$ contains two copies of $C_2$ as subgraphs and that these copies share exactly one vertex $v$. Let $u$ and $w$ be the other vertices in these copies of $C_2$. Because $H_2$ is not a minor of $G$, we have $N_G(v)=\{u,w\}$. Because $H_4$ is not a minor of $G$, we have $|N_G(u)-\{v,w\}|\leq1$ and $|N_G(w)-\{u,v\}|\leq1$. Moreover, since $C_4$ and $H_1$ are not minors of $G$, there is no path from $u$ to $w$ in $G-v$ except possibly at most one edge joining $u$ and $w$. Finally, because $H_5$ is not a minor of $G$, no vertex in $V(G)-\{u,v,w\}$ has a neighborhood of cardinality greater than $2$. Therefore, $G$ is a subgraph of the graph in \cref{fig:sharevertex}.
This graph has spectator number $1$, Thus $\uspcf{G}=1$, a contradiction.
\end{subproof}

\begin{claim}
\label{triangles}
No pair of triangles of $G$ share exactly one edge.
\end{claim}

\begin{subproof}
Otherwise, the union of these triangles contains a cycle of length $4$, violating \cref{cycle}. 
\end{subproof}

\begin{claim}
\label{onecycle}
$G$ has at most one cycle.
\end{claim}

\begin{subproof}
Suppose for a contradiction that $G$ has more than one cycle. We know that 
each pair of vertices of $G$ is joined by at most two edges. Therefore, \cref{cycle,vertex,triangles} imply that $G$ contains a triangle with vertices $u$, $v$, and $w$ with a second edge joining $u$ and $w$.

By \cref{vertex}, there is no path in $G-w$ from $u$ to $v$ other than the edge $uv$. Similarly, there is no path in $G-u$ from $w$ to $v$ other than the edge $wv$. We can also see that there is no path from $u$ to $w$ in $G-v$ other than the two edges joining $u$ and $w$. This is because $G$ has no cycle of length at least $4$ and at most two edges joining $u$ and $w$.

Therefore, there are subgraphs $G_u$, $G_v$, and $G_w$ of $G$ such that, for each vertex $x$ in $G_i$, the path from $x$ to $\{u,v,w\}$ has endpoints $x$ and $i$. (For each $i\in\{u,v,w\}$, we have $i\in V(G_i)$.) This path must be unique; otherwise, \cref{vertex} is violated. Thus, each of $G_u$, $G_v$, and $G_w$ is a tree. Moreover, if we denote by $F$ the set of four edges both of whose endpoints are in $\{u,v,w\}$, then $G$ is the graph whose vertex set is $V(G_u)\sqcup V(G_v)\sqcup V(G_w)$ and whose edge set is $E(G_u)\sqcup E(G_v)\sqcup E(G_w)\sqcup F$.

For $i\in\{u,v,w\}$, vertex $i$ has at most one neighbor in $G_i$. Otherwise, $G$ has $K_{1,4}$ as a minor. Similarly, since $G$ does not have $K_{1,4}$ as a minor, no vertex in $V(G_i)-\{i\}$ has more than two neighbors.

If $u$ and $w$ have neighbors in $G_u$ and $G_w$, respectively, then $G$ contains $H_4$ as a minor. Therefore, either $N_G(u)=\{v,w\}$ or $N_G(w)=\{u,v\}$. Without loss of generality, let $N_G(w)=\{u,v\}$. Then $G$ is a subgraph of a graph of the form given in \cref{fig:shareedges}. 
This graph has spectator number $1$, Thus $\uspcf{G}=1$, a contradiction.
\end{subproof}

Therefore, one of the following holds:
\begin{itemize}
\item[(i)] $G$ is a tree, 
\item[(ii)] $G$ has exactly one cycle, which is a triangle, or
\item[(iii)] $G$ has exactly one cycle, which is a $C_2$.
\end{itemize}

In cases (i) and (ii), $G$ is a simple graph. Since $G$ is not any of the graphs listed above, and since $G$ is connected, \cref{prop:minimaml-simple} implies that $G$ is the $3$-sun. However, by contracting an edge of the triangle in the $3$-sun, we obtain $H_4$. Therefore, $G$ has exactly one cycle, which is a $C_2$.

Let $u$ and $v$ be the vertices of $C_2$. Because $H_5$ is not a minor of $G$, every vertex in $V(G)-\{u,v\}$ has degree $1$ or $2$. Since $K_{1,4}$ is not a minor of $G$, we have $|N_G(u)-\{v\}|\leq2$ and  $|N_G(v)-\{u\}|\leq2$. Moreover, either $|N_G(u)-\{v\}|<2$ or $|N_G(v)-\{u\}|<2$. Without loss of generality, let $|N_G(v)-\{u\}|<2$.

If $|N_G(v)-\{u\}|=0$, then $G$ is a subgraph of a graph of the form shown in \cref{fig:one-cycle}.
Thus, we have have $\uspc{G}=\uspcf{G}=1$, a contradiction. 

If $|N_G(v)-\{u\}|=1$, then since $H_4$ is not a subgraph of $G$, we must also have $|N_G(u)-\{v\}|=1$. Therefore, $G$ is a subgraph of a graph of the form given in \cref{fig:one-cycle2},
which is isomorphic to a subgraph of a graph of the form shown in \cref{fig:shareedges}. The graph in \cref{fig:shareedges} has spectator number $1$. Therefore, $\uspcf{G}=1$, a contradiction.

Therefore, by contradiction, we conclude that the result holds.
\end{proof}

\section{Minor Maximal Graphs}
\label{minor max graphs}
To this point in the paper, we have discussed minor minimal graphs at some length; in this sense, we have only looked downwards.  We shall now look upwards and consider the possibilities of minor maximal graphs of a given spectator floor.

We begin with the observation that, without additional restrictions, there are no minor maximal graphs with a given spectator floor.  This is so because, given any graph $G$, if we add a new isolated vertex to $G$ in order to obtain $G'$, then $G$ is a minor of $G'$, and by \cref{prop:components}, $G'$ has the same spectator floor as $G$.  Hence, we can construct an infinite chain of graphs which are above $G$ and have the same spectator floor.

Therefore, in order to obtain any meaningful information, we must search for minor maximal graphs amongst subsets of graphs which are restricted in some way.  The most natural restriction to make is on the number of vertices and on the number of parallel edges allowed, for which we can completely characterize the minor maximal graphs.

In order to present the result, we first must present new definitions.

\begin{definition}\label{def:crowded_parade}
If $p\geq2$, then a \emph{crowded $p$-parade} is a graph $G$ with a parade of $p$ vertices that has the following properties:
\begin{itemize}
    \item Every vertex outside the parade is connected to exactly two vertices in the parade, and those two parade vertices are adjacent to each other.
    \item Any two vertices outside the parade that are adjacent to a common parade vertex are also adjacent to each other.
\end{itemize}
A \emph{crowded $1$-parade} is a graph whose simplification is a complete graph.
\end{definition}

\textcolor{black}{Since a parade is a unique shortest path between two vertices, it follows that two non-parade vertices of a crowded parade can only be adjacent if they share a neighbor on the parade.}

\begin{definition}\label{def:sat_crowded_parade}
If $p\geq2$, an \emph{$m$-saturated crowded $p$-parade} is a crowded $p$-parade where every edge which is not in the parade has $m$ parallel copies, including itself. An \emph{$m$-saturated crowded $1$-parade} is a graph such that every pair of vertices is joined by exactly $m$ edges.
\end{definition}

\begin{figure}[!htbp]
\[\begin{tikzpicture}[x=1cm, y=1cm]
\vertex[fill] (a) at (0,0){};
\vertex[fill] (b) at (2,0){};
\vertex[fill] (c) at (4,0){};
\vertex[fill] (d) at (6,0){};
\vertex[fill] (e) at (8,0){};
\vertex[fill] (f) at (10,0){};
\vertex[fill] (g) at (12,0){};

\vertex[fill] (bc) at (3,1.5){};
\vertex[fill] (cd1) at (5,2){};
\vertex[fill] (cd2) at (5,1){};
\vertex[fill] (de) at (7,1.5){};
\vertex[fill] (fg1) at (11,2){};
\vertex[fill] (fg2) at (10.4,1.5){};
\vertex[fill] (fg3) at (11.6,1.5){};
\draw (a) to (g);
\draw[double distance=2] (b) to (bc) to (c);
\draw[double distance=2] (cd1) to (c) to (cd2) to (d) to (cd1);
\draw[double distance=2] (cd1) to (cd2);
\draw[double distance=2] (d) to (de) to (e);
\draw[double distance=2] (bc) to (cd1) to (de) to (cd2) to (bc);
\draw[double distance=2] (f) to (fg1) to (g);
\draw[double distance=2] (f) to (fg2) to (g);
\draw[double distance=2] (f) to (fg3) to (g);
\draw[double distance=2] (fg1) to (fg2) to (fg3) to (fg1);
\end{tikzpicture}\]
\caption{An example of a 2-saturated crowded 7-parade.}
\label{fig:crowded_parade}
\end{figure}
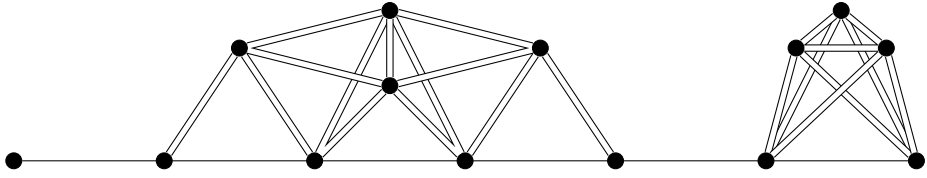

With these definitions in hand, we will show that a graph is minor maximal amongst graphs of a given spectator floor value \textcolor{black}{and given number of vertices} if and only if the graph is a crowded parade.

Before presenting this characterization of minor maximal graphs, however, we first have two technical lemmas that we will use repeatedly in the proof of the characterization.  In the first lemma, we use the notation $d_S(x,y)$ to refer to the distance between \textcolor{black}{vertices $x$ and $y$ in graph $S$}.

\begin{lemma}\label{lem:maximals}
Let $G$ be a graph that is minor maximal amongst graphs with $n$ vertices, at most $m$ parallel edges between any given vertices, and spectator floor $k$.  Let $a,b$ be the endpoints of a parade in $G$.  Let $H$ be $G$ with one edge added between some vertices $v$ and $w$ where no edge was present in $G$.  Then at least one of the following is true:
\begin{align*}
    d_H(a,b)&=d_G(a,v)+1+d_G(w,b), \ \ \mbox{or} \\
    d_H(a,b)&=d_G(a,w)+1+d_G(v,b).
\end{align*}
\end{lemma}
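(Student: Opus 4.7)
I would prove this by contradiction. Each of the two right-hand sides is the length of a concrete $a$-$b$ walk in $H$ that traverses the new edge $vw$, so we always have $d_H(a,b)\leq\min\bigl(d_G(a,v)+1+d_G(w,b),\,d_G(a,w)+1+d_G(v,b)\bigr)$; it therefore suffices to rule out the case that both of these inequalities are strict.

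Suppose they were. Then no shortest $a$-$b$ path in $H$ can use $vw$, forcing $d_H(a,b)=d_G(a,b)$, and in fact every $a$-$b$ path in $H$ of length $d_G(a,b)$ lies entirely in $G$ (otherwise it would use $vw$ and hence have length at least one of the two right-hand sides, which strictly exceeds $d_G(a,b)$). Since the parade $P$ is the unique shortest $a$-$b$ path in $G$, it remains the unique shortest $a$-$b$ path in $H$, so $\usp{H}\geq|V(P)|=\usp{G}$ and hence $\uspc{H}\leq\uspc{G}$.

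The next step is to establish that $\uspc{G}=k$, which I would extract from minor-maximality. By \cref{lem:no decontract}, there is a graph $G^*$ on the vertex set of $G$ with $G\subseteq G^*$ and $\uspc{G^*}=\uspcf{G}=k$. If some pair of vertices is joined by more than $m$ edges in $G^*$, then at least one such edge lies outside $E(G)$; deleting it cannot increase $\uspc$ by \cref{lem:max2edges}, while minor-monotonicity of $\uspcf$ forces $\uspc$ to remain at least $k$. Iterating, we obtain $G^{**}$ with $G\subseteq G^{**}$, at most $m$ parallel edges between every pair of vertices, and $\uspc{G^{**}}=k$. Since $G^{**}$ lies in the class, maximality of $G$ forces $G=G^{**}$, whence $\uspc{G}=k$.

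Now combine: $\uspcf{H}\leq\uspc{H}\leq\uspc{G}=k$, while minor-monotonicity gives $\uspcf{H}\geq\uspcf{G}=k$, so $\uspcf{H}=k$. Since $H$ has the same number of vertices as $G$ and the added edge $vw$ is not parallel to any existing edge, $H$ has at most $m$ parallel edges between every pair and therefore lies in the class. But $G$ is a proper subgraph of $H$, contradicting minor-maximality. This contradiction completes the argument. The main obstacle is the intermediate claim $\uspc{G}=k$: it requires combining \cref{lem:no decontract} with \cref{lem:max2edges} to produce a supergraph that simultaneously achieves spectator number $k$ and respects the parallel-edge bound $m$, with minor-monotonicity of $\uspcf$ preventing the successive edge deletions from dropping $\uspc$ below $k$.
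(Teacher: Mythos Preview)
Your proof is correct and follows essentially the same approach as the paper's: both establish $\uspc{G}=k$ by combining \cref{lem:no decontract} with \cref{lem:max2edges} and then use maximality to force the parade $P$ to lose its unique-shortest-path status in $H$, which in turn requires an alternative $a$--$b$ geodesic through the new edge $vw$. The only difference is presentational: you argue by contradiction from the outset (assuming both displayed quantities strictly exceed $d_H(a,b)$ and deriving $\uspcf{H}=k$), whereas the paper proceeds directly by first deducing $\uspc{H}>\uspc{G}$ and then extracting the new geodesic $Q$ through $vw$.
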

\begin{proof}
By \cref{lem:no decontract}, there is a graph $G'$ with the same number of vertices as $G$ such that $G$ is a subgraph of $G'$ and such that $\uspcf{G}=\uspc{G'}$.  Further, we may also assume that $G'$ has at most $m$ parallel edges between any given vertices. (It follows from \cref{lem:max2edges} that adding an edge in parallel with an edge in $G$ can only increase the spectator number.) Since we assumed that $G$ is maximal amongst such graphs, we conclude that $G'=G$.  Hence $\uspc{G}=k$.

Now consider the graph $H$, which is the same as $G$ but with an edge added between $v$ and $w$, which are some vertices of $G$ that are not adjacent in $G$.  Since $G$ is a proper subgraph of $H$, and $H$ is a graph with $n$ vertices and at most $m$ parallel edges between any given vertices, and $G$ is maximal amongst graphs with $n$ vertices, at most $m$ parallel edges between any given vertices, and spectator floor $k$, we conclude that $H$ does not have spectator floor $k$.  Furthermore, since $G$ is a subgraph of $H$, $\uspcf{H}>\uspcf{G}$.  Thus, $\uspc{H}>\uspc{G}$.

Since $\uspc{G}=k$, $G$ has a parade $P$ of $n-k$ vertices.  Let us call the endpoints of $P$ by the names $a$ and $b$.  Since $H$ still contains $P$ but $\uspc{H}\neq k$, we conclude that $P$ is no longer a longest unique shortest path in $H$.  There cannot be a longer unique shortest path than $P$ in $H$, however, since that would result in $\uspc{H}<\uspc{G}$, so it must be that $P$ is not a unique shortest path in $H$.  There are two possibilities: either $P$ is still a shortest path in $H$ but no longer unique, or $P$ is no longer a shortest path in $H$.  Either way, there must be a new shortest path in $H$ between $a$ and $b$ that is of the same or shorter length than $P$; let us call this new path $Q$.

$Q$ must contain the edge $\{v,w\}$, since that edge is the only difference between $G$ and $H$.  There are two possibilities.  If $Q$ connects $a,v,w,b$ in that order, then we have
\[d_H(a,b)=d_G(a,v)+1+d_G(w,b).\]
Otherwise, if $Q$ connects $a,w,v,b$ in that order, then we have
\[d_H(a,b)=d_G(a,w)+1+d_G(v,b).\]
\end{proof}

We now present another technical lemma to be used in proving the characterization of minor maximal graphs.  This lemma tells us that minor maximal graphs must be connected.

\begin{lemma}\label{lem:maxls_are_connected}
Let $G$ be a graph that is minor maximal amongst graphs with $n$ vertices, at most $m$ parallel edges between any given vertices, and spectator floor $k$.  Then $G$ is connected.
\end{lemma}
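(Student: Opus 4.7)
The plan is to suppose for contradiction that $G$ is disconnected, write $G = G_1 \sqcup G_2$ with both parts nonempty, and add a single edge between $G_1$ and $G_2$ to produce a graph whose spectator floor is strictly less than $k$, which will directly contradict the minor-monotonicity of $\uspcf{\cdot}$.

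The first step of the plan, and the one I expect to require the most care, is to establish $\uspc{G} = \uspcf{G} = k$. By \cref{lem:no decontract} there is a spanning supergraph $G^*$ of $G$ with $\uspc{G^*} = \uspcf{G} = k$, but $G^*$ may carry more than $m$ parallel edges between some pair of vertices. I will then invoke \cref{lem:max2edges} iteratively, deleting parallel edges of $G^*$ that are not already present in $G$, one at a time, to arrive at a spanning supergraph $G^{**}$ of $G$ with at most $m$ parallel edges everywhere and $\uspc{G^{**}} \le \uspc{G^*} = k$. Minor-monotonicity, together with the trivial inequality $\uspcf{\cdot} \le \uspc{\cdot}$, then forces $\uspc{G^{**}} \ge \uspcf{G^{**}} \ge \uspcf{G} = k$, so $\uspc{G^{**}} = k$ and $G^{**}$ lies in the same restricted family as $G$; maximality of $G$ then gives $G^{**} = G$ and hence $\uspc{G} = k$.

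Next I will pick a parade $P$ of $G$, which has $\usp{G} = n-k$ vertices; since $G$ is disconnected and $P$ is connected, $P$ lies in a single component, which I may assume is $G_1$. With $a$ and $v$ denoting the endpoints of $P$ and $w$ any vertex of $G_2$, I will form $H$ by adding the single edge $e = vw$, which is permitted since no edge previously joined $V(G_1)$ and $V(G_2)$. Because $e$ is the only edge between $G_1$ and $G_2$ in $H$, every $a$-to-$w$ path in $H$ decomposes uniquely as an $a$-to-$v$ path in $G_1$ followed by $e$; since $P$ is the unique shortest $a$-to-$v$ path in $G_1$, the concatenation of $P$ with $e$ and the new endpoint $w$ is the unique shortest $a$-to-$w$ path in $H$, and it has $n-k+1$ vertices.

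Consequently $\usp{H} \ge n-k+1$, so $\uspcf{H} \le \uspc{H} \le k-1 < k$. On the other hand, $G$ is a spanning subgraph and therefore a minor of $H$, so minor-monotonicity gives $\uspcf{H} \ge \uspcf{G} = k$. The two bounds are incompatible, which is the required contradiction and shows that $G$ must be connected.
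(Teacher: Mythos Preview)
Your proof is correct, and your first step (establishing $\uspc{G}=k$ via \cref{lem:no decontract} and then trimming excess parallel edges with \cref{lem:max2edges}) matches what the paper does in the proof of \cref{lem:maximals}, which it simply cites here. Where you diverge is in the contradiction: the paper invokes the additivity theorem (\cref{prop:components}) to write $\uspcf{G}=\sum_i\uspcf{G_i}$, replaces each component $G_i$ by a spanning supergraph $H_i$ realizing $\uspcf{G_i}$, and then joins the parades of the $H_i$ end to end with $r-1$ new edges to obtain a connected supergraph $H$ with $\uspcf{H}=k$; since $H\neq G$ this contradicts maximality. Your route is more elementary: you exploit that the parade $P$ already sits in a single component, add one bridge edge from an endpoint of $P$ to the other piece, and observe that the cut-edge forces the extended path to be the unique shortest path between its endpoints in $H$, whence $\uspc{H}\le k-1<\uspcf{G}$, contradicting minor-monotonicity. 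Your argument avoids \cref{prop:components} entirely and uses only the definition of a unique shortest path together with the cut-edge structure; the paper's argument, by contrast, leans on the additivity machinery but produces a witness $H$ that actually lies in the restricted family and violates maximality directly. Both are valid; yours is shorter and requires less infrastructure.
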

\begin{proof}
Suppose to the contrary that $G$ is not connected, but rather consists of connected components $G_1,G_2,...,G_r$.  We know from the proof of \cref{lem:maximals} that $\uspc{G}=\uspcf{G}=k$, and from \cref{prop:components} we know that
\[\uspcf{G}=\uspcf{G_1}+\uspcf{G_2}+\cdots+\uspcf{G_r}.\]

For each $i=1,2,...,r$, let $H_i$ be a supergraph of $G_i$ that realizes $\uspcf{G_i}$ without adding any additional vertices; that is, $\uspc{H_i}=\uspcf{G_i}$.  We know such $H_i$ exist from \cref{{lem:no decontract}}.  Then let $H$ be a supergraph of $H_1 \cup H_2 \cup \cdots \cup H_r$ in which we add $r-1$ edges to the graph in order to take one parade from each of the $H_i$ and connect them into one long parade.

Then we have that $H$ is a supergraph of $G$ and $H$ still has $n$ vertices.  Furthermore, due to the way we constructed $H$, we have 
\begin{align*}
    \uspc{H} &=\uspc{H_1}+\uspc{H_2}+\cdots+\uspc{H_r} \\
            & =\uspcf{G_1}+\uspcf{G_2}+\cdots+\uspcf{G_r} \\
            & =\uspcf{G}=\uspc{G}.
\end{align*}

Since $H$ is a supergraph of $G$, it follows that $\uspcf{H}\geq \uspcf{G}=\uspc{G}$.  Furthermore, since $\uspc{H}=\uspc{G}$, it follows that $\uspcf{H}\leq \uspc{G}$.  Hence $\uspcf{H}=\uspc{G}=k$.

Thus, since $H$ has $n$ vertices and spectator floor $k$, and is a supergraph of $G$, and since we assumed $G$ was maximal amongst such graphs, it must be that $G=H$.  Since we assumed that $G$ was disconnected and $H$ is connected by construction, this is a contradiction.
\end{proof}

Before presenting the main results of this section, we prove a lemma that takes care of a special case.

\begin{lemma}
\label{lem:1-parade} Let $m\geq2$. Then $G$ is minor maximal amongst graphs with $n$ vertices, at most $m$ parallel edges between any given pair of vertices, and spectator floor $n-1$, if and only if $G$ is an $m$-saturated crowded $1$-parade.
\end{lemma}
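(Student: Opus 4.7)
The plan is to deduce both directions from the inequality chain $\uspcf{G}\le \uspc{G}\le n-1$ together with \cref{lem:no decontract} and \cref{lem:max2edges}.

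For the forward direction, let $G$ be an $m$-saturated crowded $1$-parade. Since every pair of vertices is joined by $m\ge 2$ parallel edges, no pair of distinct vertices admits a unique shortest path (the distance is $1$, realized by $m$ distinct edges), so $\usp{G}=1$ and $\uspc{G}=n-1$. To see that $\uspcf{G}$ also equals $n-1$, I would apply \cref{lem:no decontract} to obtain a supergraph $G''$ of $G$ on the same vertex set with $\uspc{G''}=\uspcf{G}$, and observe that $G''$ differs from $G$ only by extra parallel edges between pairs that already carry at least two edges; by \cref{lem:max2edges} each such addition preserves $\uspc$, so $\uspc{G''}=\uspc{G}=n-1$. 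Minor maximality is then immediate: any graph $G'$ on $n$ vertices having $G$ as a minor must be a spanning supergraph of $G$ (the minor operations that preserve the vertex count are just edge deletions), but $G$ already saturates the $m$-edge bound on every pair, so $G'=G$.

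For the reverse direction, I would start from the squeeze $n-1=\uspcf{G}\le \uspc{G}\le n-1$, which forces $\uspc{G}=n-1$ and $\usp{G}=1$. The core of the argument is then to prove that every pair of vertices is joined by exactly $m$ edges, which I would do by contradiction: if some pair $\{v,w\}$ is joined by fewer than $m$ edges, let $G'$ be $G$ together with one additional $vw$-edge. Then $G'$ still respects the parallel-edge bound, $G$ is a proper subgraph (hence a proper minor) of $G'$, and by minor-monotonicity of the spectator floor we have $n-1=\uspcf{G}\le \uspcf{G'}\le \uspc{G'}\le n-1$, so $\uspcf{G'}=n-1$. This contradicts the minor maximality of $G$, so every pair is joined by exactly $m$ edges, and $G$ is an $m$-saturated crowded $1$-parade.

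I do not foresee a real obstacle: the argument is short once one notices that $\uspcf{G}=n-1$ pins $\usp{G}=1$, and that for $m\ge 2$ the results of \cref{Minor Operations and the Spectator Floor} render the addition of parallel edges essentially invisible to $\uspc$. The one small check is that any $n$-vertex graph having $G$ as a minor is a spanning subgraph of $G$, which follows because edge contraction strictly reduces the vertex count.
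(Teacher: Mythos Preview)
Your proposal is correct and follows essentially the same approach as the paper: both directions ultimately rest on the observation that the $m$-saturated crowded $1$-parade is the unique maximum (under the subgraph/minor order) among $n$-vertex graphs with at most $m$ parallel edges per pair, and that it has spectator floor $n-1$. Your justification of $\uspcf{G}=n-1$ via \cref{lem:no decontract} and \cref{lem:max2edges} is in fact more explicit than the paper's, which asserts this step without detail.
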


\begin{proof}
Note that there is exactly one $m$-saturated crowded $1$-parade $H$ with $n$ vertices. Since $m\neq1$, no edge in $H$ is a unique shortest path. This implies that the unique shortest paths of $H$ each contain only one vertex. Therefore, $\uspc{H}=1$, implying that $\uspcf{H}=1$. Every graph containing $H$ as a minor either has more than $n$ vertices or a pair of vertices with more than $m$ edges joining them. Therefore, $H$ is maximal.

Conversely, note that every graph $G$ with $n$ vertices and at most $m$ parallel edges between any given pair of vertices is a minor of $H$. Therefore, the only minor maximal graph amongst graphs with $n$ vertices, at most $m$ parallel edges between any given pair of vertices, and spectator floor $n-1$ is $H$.
\end{proof}

We now present the first of the two main results of this section, which together provide a complete characterization of the minor maximal graphs with a given spectator floor value. Note that complete graphs are both $1$-saturated $1$-parades and $1$-saturated $2$-parades. This gives some intuition for the reason the first sentence of the theorem is needed.

\begin{theorem}
Let $k\leq n-2$ and $m\geq1$, or let $k=n-1$ and $m\geq2$.
If $G$ is minor maximal amongst graphs with $n$ vertices, at most $m$ parallel edges between any given vertices, and spectator floor $k$, then $G$ is an $m$-saturated crowded $(n-k)$-parade.
\end{theorem}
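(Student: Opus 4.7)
The plan is to split on the parade length. The case $k = n - 1$ (which forces $m \geq 2$) is dispatched by \cref{lem:1-parade}, so assume $k \leq n - 2$ and set $p := n - k \geq 2$. The proof of \cref{lem:maximals} already shows $\uspc{G} = \uspcf{G} = k$, so $G$ has a parade $P = v_0 v_1 \cdots v_{p-1}$ on $p$ vertices; set $a := v_0$ and $b := v_{p-1}$, and observe that $d_G(a, v_i) = i$ and $d_G(v_i, b) = p - 1 - i$, since initial segments of shortest paths are shortest. The goal is to verify, in order, the three defining properties of an $m$-saturated crowded $(n-k)$-parade: (a) every non-parade vertex is adjacent to exactly two consecutive vertices on $P$; (b) any two non-parade vertices sharing a parade neighbor are themselves adjacent; (c) every non-parade edge has exactly $m$ parallel copies.

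For (a), fix a non-parade vertex $x$ and write $\alpha := d_G(a, x)$, $\beta := d_G(x, b)$, $S_x := \{i : v_i \sim x\}$. Uniqueness of $P$ as the shortest $a$-$b$ path in $G$ forces $\alpha + \beta \geq p$, and the triangle inequality gives $S_x \subseteq [\alpha - 1, p - \beta]$. Applying \cref{lem:maximals} to each non-adjacent pair $(x, v_i)$ (using that $P$ survives in the augmented graph, so $d_H(a, b) \leq p - 1$) dually yields $S_x \supseteq [p - 1 - \beta, \alpha] \cap [0, p - 1]$. A separate application of \cref{lem:maximals} to the pairs $(x, a)$ and $(x, b)$, when $x$ is non-adjacent to the respective endpoint, forces $\alpha, \beta \leq p - 2$, so the lower-bound interval lies inside $[0, p - 1]$; the borderline possibility $\alpha = \beta = 1$ is compatible with uniqueness of $P$ only when $p = 2$, and in that case the claim is automatic. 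Comparing sizes then sandwiches
\[ 2 \leq (\alpha + \beta) - p + 2 \leq |S_x| \leq p + 2 - (\alpha + \beta) \leq 2, \]
which forces $\alpha + \beta = p$ and $S_x = \{\alpha - 1, \alpha\}$, precisely the content of (a).

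For (b), if $x, y$ are non-parade vertices with $|\alpha_x - \alpha_y| \leq 1$ (equivalent, via (a), to sharing a parade neighbor), then any $a$-$b$ path in $G + xy$ that uses the new edge has length at least $\min\{\alpha_x + 1 + \beta_y,\, \alpha_y + 1 + \beta_x\} = p + 1 - |\alpha_x - \alpha_y| \geq p$, so $P$ remains the unique shortest $a$-$b$ path in $G + xy$. Thus $\uspc{G + xy} \leq k$, which combined with $\uspcf{G + xy} \geq \uspcf{G} = k$ yields $\uspcf{G + xy} = k$, contradicting the maximality of $G$ unless $xy \in E(G)$. For (c), a parade edge cannot be duplicated, since a parallel copy immediately yields a second $a$-$b$ path of length $p - 1$ and destroys the uniqueness of $P$. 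For a non-parade edge $e$, a parallel copy leaves $P$ undisturbed (since $P$ uses only parade edges), so the same maximality argument forces $G$ to have at least two parallel copies of $e$ already; once two are present, \cref{lem:max2edgesfloor} shows every further parallel copy preserves $\uspcf$, and maximality pushes the multiplicity up to $m$.

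The main obstacle is the boundary-case bookkeeping in step (a): one must first pin $\alpha, \beta$ into $[1, p-2]$ (when $p \geq 3$) before the interval comparison becomes valid. The route above invokes \cref{lem:maximals} once separately on each of $(x, a)$ and $(x, b)$ to secure this range; once the bounds are in place, the sandwich of $S_x$ between two intervals whose lengths differ by exactly $(\alpha + \beta) - p$ collapses into equality, yielding both $\alpha + \beta = p$ and $|S_x| = 2$ simultaneously.
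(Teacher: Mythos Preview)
Your route through part (a) is genuinely different from the paper's: instead of a case split on $|S_x|\in\{0,1,2,\ge 3\}$ (with connectedness invoked separately via \cref{lem:maxls_are_connected} to rule out $|S_x|=0$), you parametrize by $\alpha=d_G(a,x)$ and $\beta=d_G(x,b)$ and squeeze $S_x$ between the intervals $[\alpha-1,p-\beta]$ and $[p-1-\beta,\alpha]$. This is cleaner and avoids the explicit connectedness lemma, since a disconnected $x$ would make both right-hand sides of \cref{lem:maximals} infinite when applied to $(x,a)$. Your arguments for (b) and (c) are correct and essentially parallel the paper's, with the added bonus that in (b) you reuse the relation $\alpha_x+\beta_x=p$ to compute the path length directly.

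There is, however, a real gap in the boundary bookkeeping of (a). Your sandwich $( \alpha+\beta)-p+2\le |S_x|$ requires the full interval $[p-1-\beta,\alpha]$ to sit inside $[0,p-1]$, i.e.\ $\alpha\le p-1$ and $\beta\le p-1$. You obtain $\beta\le p-2$ from \cref{lem:maximals} on $(x,a)$ \emph{only when} $x\not\sim a$, and $\alpha\le p-2$ from $(x,b)$ \emph{only when} $x\not\sim b$; you then dispose of the case $x\sim a$ and $x\sim b$ via $p=2$. But the mixed case $x\sim a$, $x\not\sim b$ (so $\alpha=1$, $p\ge 3$) gives you no bound on $\beta$ from either application, and the displayed sandwich can fail: e.g.\ if $\beta=p$ the lower interval clipped to $[0,p-1]$ has only two elements, not $(\alpha+\beta)-p+2=3$. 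The repair is short---apply \cref{lem:maximals} to $(x,v_1)$ in that subcase: both options $\alpha+1+(p-2)=p$ and $1+1+\beta$ exceed $p-1$ when $\beta\ge p-1$, forcing $x\sim v_1$ and hence $\beta\le p-1$---but as written the argument does not cover it.
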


\begin{proof}
By \cref{lem:1-parade}, the result holds when $k=n-1$. Therefore, we may assume that $k\leq n-2$.

Suppose that $G$ is minor maximal amongst graphs with $n$ vertices, at most $m$ parallel edges between any given vertices, and spectator floor $k$.  We will show that $G$ is an $m$-saturated crowded $(n-k)$-parade.

As was shown in the proof of \cref{lem:maximals}, $\uspc{G}=k$ and so $G$ has a parade $P$ of $n-k\geq2$ vertices.  We will call the endpoints of $P$ by the names $a$ and $b$.  We will now show that $P$ has the properties in the definition of a crowded $(n-k)$-parade, \cref{def:crowded_parade}.  Let $v$ be a vertex outside the parade $P$.  We will consider cases based on the number of vertices in $P$ that are adjacent to $v$.

Suppose $v$ is adjacent to three or more vertices in $P$.  Then two of the parade vertices that $v$ is adjacent to have a distance of 2 or more within the parade.  Hence going through $v$ would provide a path of the same or lesser length between those two vertices as compared to the parade.  This contradicts the properties of a parade.  So $v$ cannot be adjacent to three or more vertices in the parade.

Now suppose that $v$ is adjacent to 2 vertices in $P$, and those two vertices have a distance of 2 or more along the parade.  The same argument from the last paragraph applies; this is a contradiction.  Hence, if $v$ is adjacent to exactly two vertices in the parade, then those two vertices must also be adjacent to each other.

Next, suppose that $v$ is adjacent to exactly 1 vertex in $P$.  There are two cases: either $v$ is adjacent to an endpoint of $P$, or not.

Suppose $v$ is adjacent to an endpoint of the parade; without loss of generality, let us assume $v$ is adjacent to $a$.  Let $x$ be the vertex of $P$ that is adjacent to $a$, and let $H$ be the graph made from $G$ by adding edge $\{v,x\}$, as shown in \cref{fig:maxl_pf_1}.

\begin{figure}[!htbp]
\begin{subfigure}[b]{.47\textwidth}
\[\begin{tikzpicture}[x=1cm, y=1cm]
\vertex[fill,inner sep=1pt,minimum size=1pt] (a) at (0,0)[label=below:$a$]{};
\vertex[fill,inner sep=1pt,minimum size=1pt] (x) at (1,0)[label=below:$x$]{};
\vertex[fill,inner sep=1pt,minimum size=1pt] (y1) at (2,0){};
\vertex[fill,inner sep=1pt,minimum size=1pt] (y2) at (3,0){};
\vertex[fill,inner sep=1pt,minimum size=1pt] (b) at (4,0)[label=below:$b$]{};
\vertex[fill,inner sep=1pt,minimum size=1pt] (v) at (0,1)[label=left:$v$]{};

\draw (a) to (y1);
\draw (y1) to (2.3,0); \draw[dotted,semithick] (2.4,0) to (2.6,0); \draw(2.7,0) to (y2);
\draw (y2) to (b);
\draw (a) to (v);

\end{tikzpicture}\]
\caption{The subgraph of $G$ induced by $P\cup\{v\}$.}
\end{subfigure}\begin{subfigure}[b]{.47\textwidth}
\[\begin{tikzpicture}[x=1cm, y=1cm]
\vertex[fill,inner sep=1pt,minimum size=1pt] (a) at (0,0)[label=below:$a$]{};
\vertex[fill,inner sep=1pt,minimum size=1pt] (x) at (1,0)[label=below:$x$]{};
\vertex[fill,inner sep=1pt,minimum size=1pt] (y1) at (2,0){};
\vertex[fill,inner sep=1pt,minimum size=1pt] (y2) at (3,0){};
\vertex[fill,inner sep=1pt,minimum size=1pt] (b) at (4,0)[label=below:$b$]{};
\vertex[fill,inner sep=1pt,minimum size=1pt] (v) at (0,1)[label=left:$v$]{};

\draw (a) to (y1);
\draw (y1) to (2.3,0); \draw[dotted,semithick] (2.4,0) to (2.6,0); \draw(2.7,0) to (y2);
\draw (y2) to (b);
\draw (a) to (v);
\draw (v) to (x);

\end{tikzpicture}\]
\caption{The subgraph of $H$ induced by $P\cup\{v\}$.}
\end{subfigure}
\caption{}\label{fig:maxl_pf_1}
\end{figure}

Then by \cref{lem:maximals}, one of the following is true:
\begin{align*}
d_H(a,b)&=d_G(a,v)+1+d_G(x,b)=1+d_G(a,x)+d_G(x,b) \geq 1+d_G(a,b), \ \ \mbox{or} \\
d_H(a,b)&=d_G(a,x)+1+d_G(v,b)=1+d_G(a,v)+d_G(v,b)\geq 1+d_G(a,b).
\end{align*}
However, since $G$ is a subgraph of $H$, we must have $d_H(a,b)\leq d_G(a,b)$.  This is a contradiction.

Now we consider the situation where $v$ is adjacent to exactly one vertex of $P$, and that vertex is not an endpoint of $P$.  Let $w$ be the vertex of $P$ adjacent to $v$, and let $w$ and $x$ be the vertices of $P$ adjacent to $w$, where $x$ is closer to $a$ than $y$ is.  Now consider the graph $X$, which is the same as $G$ but with an edge from $v$ to $x$ added, and the graph $Y$, which is the same as $G$ but with an edge from $v$ to $y$ added.  $G$, $X$, and $Y$ are as shown in \cref{fig:maxl_pf_2}.

\begin{figure}[!htbp]
\centering
\begin{subfigure}[b]{.3\textwidth}
\[\begin{tikzpicture}[x=0.8cm, y=1cm]
\vertex[fill,inner sep=1pt,minimum size=1pt] (a) at (0,0)[label=below:$a$]{};
\vertex[fill,inner sep=1pt,minimum size=1pt] (x) at (1,0)[label=below:$x$]{};
\vertex[fill,inner sep=1pt,minimum size=1pt] (w) at (2,0)[label=below:$w$]{};
\vertex[fill,inner sep=1pt,minimum size=1pt] (y) at (3,0)[label=below:$y$]{};
\vertex[fill,inner sep=1pt,minimum size=1pt] (b) at (4,0)[label=below:$b$]{};
\vertex[fill,inner sep=1pt,minimum size=1pt] (v) at (2,1)[label=left:$v$]{};
\draw (a) to (0.3,0); \draw [dotted,semithick] (0.4,0) to (0.6,0); \draw (0.7,0) to (x);
\draw (x) to (y);
\draw (y) to (3.3,0); \draw[dotted,semithick] (3.4,0) to (3.6,0); \draw (3.7,0) to (b);
\draw (v) to (w);
\end{tikzpicture}\]
\caption{The subgraph of $G$ induced by $P\cup\{v\}$.}
\end{subfigure}\;\;\;\;\begin{subfigure}[b]{.3\textwidth}
\[\begin{tikzpicture}[x=0.8cm, y=1cm]
\vertex[fill,inner sep=1pt,minimum size=1pt] (a) at (0,0)[label=below:$a$]{};
\vertex[fill,inner sep=1pt,minimum size=1pt] (x) at (1,0)[label=below:$x$]{};
\vertex[fill,inner sep=1pt,minimum size=1pt] (w) at (2,0)[label=below:$w$]{};
\vertex[fill,inner sep=1pt,minimum size=1pt] (y) at (3,0)[label=below:$y$]{};
\vertex[fill,inner sep=1pt,minimum size=1pt] (b) at (4,0)[label=below:$b$]{};
\vertex[fill,inner sep=1pt,minimum size=1pt] (v) at (2,1)[label=left:$v$]{};
\draw (a) to (0.3,0); \draw [dotted,semithick] (0.4,0) to (0.6,0); \draw (0.7,0) to (x);
\draw (x) to (y);
\draw (y) to (3.3,0); \draw[dotted,semithick] (3.4,0) to (3.6,0); \draw (3.7,0) to (b);
\draw (v) to (w);
\draw (v) to (x);
\end{tikzpicture}\]
\caption{The subgraph of $X$ induced by $P\cup\{v\}$.}
\end{subfigure}\;\;\;\;\begin{subfigure}[b]{.3\textwidth}
\[\begin{tikzpicture}[x=0.8cm, y=1cm]
\vertex[fill,inner sep=1pt,minimum size=1pt] (a) at (0,0)[label=below:$a$]{};
\vertex[fill,inner sep=1pt,minimum size=1pt] (x) at (1,0)[label=below:$x$]{};
\vertex[fill,inner sep=1pt,minimum size=1pt] (w) at (2,0)[label=below:$w$]{};
\vertex[fill,inner sep=1pt,minimum size=1pt] (y) at (3,0)[label=below:$y$]{};
\vertex[fill,inner sep=1pt,minimum size=1pt] (b) at (4,0)[label=below:$b$]{};
\vertex[fill,inner sep=1pt,minimum size=1pt] (v) at (2,1)[label=left:$v$]{};
\draw (a) to (0.3,0); \draw [dotted,semithick] (0.4,0) to (0.6,0); \draw (0.7,0) to (x);
\draw (x) to (y);
\draw (y) to (3.3,0); \draw[dotted,semithick] (3.4,0) to (3.6,0); \draw (3.7,0) to (b);
\draw (v) to (w);
\draw (v) to (y);
\end{tikzpicture}\]
\caption{The subgraph of $Y$ induced by $P\cup\{v\}$.}
\end{subfigure}
\caption{}\label{fig:maxl_pf_2}
\end{figure}

For $X$, \cref{lem:maximals} tells us that one of two equations must hold.
We have a first possibility
\begin{align}\label{eqn:QX1}
\begin{split}
    d_X(a,b)&=d_G(a,v)+1+d_G(x,b)\\
    &=d_G(a,v)+d_G(v,w)+d_G(x,b)\\
    &\geq d_G(a,w)+d_G(x,b)\\
    &=d_G(a,b)+1,
\end{split}
\end{align}
or a second possibility
\begin{align}\label{eqn:QX2}
\begin{split}
    d_X(a,b)&=d_G(a,x)+1+d_G(v,b)\\
    &=d_G(a,x)+1+d_G(x,v)-d_G(x,v)+d_G(v,b)\\
    &>d_G(a,x)+d_G(x,v)+d_G(v,b)-1\\
    &> d_G(a,b)-1.
\end{split}
\end{align}
Note that the second strict inequality in \cref{eqn:QX2} is because an off-parade path between two parade vertices is longer than the parade route.

Since $G$ is a subgraph of $X$, we must have $d_X(a,b)\leq d_G(a,b)$, therefore \cref{eqn:QX1} is a contradiction, and so \cref{eqn:QX2} is true and implies $d_X(a,b)=d_G(a,b)$.

For $Y$, \cref{lem:maximals} tells us that one of the following is true: either
\begin{equation}\label{eqn:QY1}
    d_Y(a,b)=d_G(a,v)+1+d_G(y,b)
\end{equation}
or
\begin{equation}\label{eqn:QY2}
    d_Y(a,b)=d_G(a,y)+1+d_G(v,b).
\end{equation}

However, \cref{eqn:QY2} leads to a contradiction by the same logic as for \cref{eqn:QX1}.  Hence \cref{eqn:QY1} is true and by the same logic as for \cref{eqn:QX2} it implies $d_Y(a,b)=d_G(a,b)$.

When we replace the left-hand sides of \cref{eqn:QX2,eqn:QY1} with $d_G(a,b)$ and add them together, we obtain
\begin{align}\label{eqn:QXQY}
\begin{split}
    2d_G(a,b)&=d_G(a,v)+d_G(v,b)+d_G(a,x)+d_G(y,b)+2\\
    &>d_G(a,b)+d_G(a,x)+d_G(y,b)+2\implies\\
    d_G(a,b)&>d_G(a,x)+d_G(y,b)+2\\
    &=d_G(a,b).
\end{split}
\end{align}
(Note that strict inequality is required because an off-parade path between two parade vertices is longer than the parade route.)
This yields a contradiction.

Thus we conclude that in $G$, a vertex $v$ that is not in $P$ cannot be adjacent to exactly one vertex in $P$.

For the last case, let us consider vertices of $G$ that are not in $P$ and are not adjacent to any vertices in $P$.  We will show these cannot exist.  First note that $G$ must be connected because of \cref{lem:maxls_are_connected}. Let $d_G(v,P)$ denote the distance from any vertex $v$ to the path $P$; we define
\[d_G(v,P):=\min_{p\in P}\{d_G(v,p)\}.\]
Since $G$ is connected, $d_G(v,P)$ is finite for all $v\in G$.  For any vertex $v$ that is not in $P$ and not adjacent to $P$, either $d_G(v,P)=2$ or $d_G(v,P)>2$.  If $d_G(v,P)>2$, then there is a path of length $d_G(v,P)$ from $v$ to a vertex $p\in P$, and so there is some vertex $v'$ along that path that has $d_G(v',P)=2$.

Hence, if there are any vertices of $G$ that are not in $P$ and are not adjacent to $P$, then there is some vertex $v$ with $d_G(v,P)=2$.  We will show this leads to a contradiction.  Since $d_G(v,P)=2$, there is some vertex, call it $w$, that is adjacent to $v$ and adjacent to $P$.  We have already shown in this proof that if $w$ is adjacent to any vertex of $P$, then it is adjacent to exactly two vertices of $P$, which are also adjacent to each other.  Let us call the two vertices of $P$ that $w$ is adjacent to by the names $x$ and $y$, and the endpoints of $P$ by the names $a$ and $b$.  As before, we shall assume the $a$ is the endpoint that is closer to $x$.

Now consider the graph $X$, which is the same as $G$ but with an edge from $v$ to $x$ added, and the graph $Y$, which is the same as $G$ but with an edge from $v$ to $y$ added.  The situation is shown in \cref{fig:maxl_pf_3}.

\begin{figure}[!htbp]
\centering
\begin{subfigure}[b]{.3\textwidth}
\[\begin{tikzpicture}[x=1cm, y=1cm]
\vertex[fill,inner sep=1pt,minimum size=1pt] (a) at (0,0)[label=below:$a$]{};
\vertex[fill,inner sep=1pt,minimum size=1pt] (x) at (0.85,0)[label=below:$x$]{};
\vertex[fill,inner sep=1pt,minimum size=1pt] (w) at (1.5,1)[label=right:$w$]{};
\vertex[fill,inner sep=1pt,minimum size=1pt] (y) at (2.15,0)[label=below:$y$]{};
\vertex[fill,inner sep=1pt,minimum size=1pt] (b) at (3,0)[label=below:$b$]{};
\vertex[fill,inner sep=1pt,minimum size=1pt] (v) at (1.5,2)[label=right:$v$]{};

\draw (a) to (0.3,0); \draw [dotted,semithick] (.33,0) to (.52,0); \draw (.55,0) to (x); 
\draw (y) to (2.45,0); \draw[dotted,semithick] (2.48,0) to (2.67,0); \draw (2.7,0) to (b);
\draw (v) to (w) to (x) to (y) to (w);
\end{tikzpicture}\]
\caption{The subgraph of $G$ induced by $P\cup\{v,w\}$.}
\end{subfigure}\;\;\;\;\begin{subfigure}[b]{.3\textwidth}
\[\begin{tikzpicture}[x=1cm, y=1cm]
\vertex[fill,inner sep=1pt,minimum size=1pt] (a) at (0,0)[label=below:$a$]{};
\vertex[fill,inner sep=1pt,minimum size=1pt] (x) at (0.85,0)[label=below:$x$]{};
\vertex[fill,inner sep=1pt,minimum size=1pt] (w) at (1.5,1)[label=right:$w$]{};
\vertex[fill,inner sep=1pt,minimum size=1pt] (y) at (2.15,0)[label=below:$y$]{};
\vertex[fill,inner sep=1pt,minimum size=1pt] (b) at (3,0)[label=below:$b$]{};
\vertex[fill,inner sep=1pt,minimum size=1pt] (v) at (1.5,2)[label=right:$v$]{};

\draw (a) to (0.3,0); \draw [dotted,semithick] (.33,0) to (.52,0); \draw (.55,0) to (x); 
\draw (y) to (2.45,0); \draw[dotted,semithick] (2.48,0) to (2.67,0); \draw (2.7,0) to (b);
\draw (v) to (w) to (x) to (y) to (w);
\draw (v) to (x);
\end{tikzpicture}\]
\caption{The subgraph of $X$ induced by $P\cup\{v,w\}$.}
\end{subfigure}\;\;\;\;\begin{subfigure}[b]{.3\textwidth}
\[\begin{tikzpicture}[x=1cm, y=1cm]
\vertex[fill,inner sep=1pt,minimum size=1pt] (a) at (0,0)[label=below:$a$]{};
\vertex[fill,inner sep=1pt,minimum size=1pt] (x) at (0.85,0)[label=below:$x$]{};
\vertex[fill,inner sep=1pt,minimum size=1pt] (w) at (1.5,1)[label=left:$w$]{};
\vertex[fill,inner sep=1pt,minimum size=1pt] (y) at (2.15,0)[label=below:$y$]{};
\vertex[fill,inner sep=1pt,minimum size=1pt] (b) at (3,0)[label=below:$b$]{};
\vertex[fill,inner sep=1pt,minimum size=1pt] (v) at (1.5,2)[label=left:$v$]{};

\draw (a) to (0.3,0); \draw [dotted,semithick] (.33,0) to (.52,0); \draw (.55,0) to (x); 
\draw (y) to (2.45,0); \draw[dotted,semithick] (2.48,0) to (2.67,0); \draw (2.7,0) to (b);
\draw (v) to (w) to (x) to (y) to (w);
\draw (v) to (y);
\end{tikzpicture}\]
\caption{The subgraph of $Y$ induced by $P\cup\{v,w\}$.}
\end{subfigure}
\caption{}\label{fig:maxl_pf_3}
\end{figure}

These graphs are not the same as the previous $X$ and $Y$, but all of the arguments from before regarding \cref{eqn:QX1,eqn:QX2,eqn:QY1,eqn:QY2} still apply.  Most of \cref{eqn:QXQY} applies as well, with the exception of the last line, where we used the fact that $d_G(a,x)+d_G(y,b)+2=d_G(a,b)$, which is no longer true.  Now we have $d_G(a,x)+d_G(y,b)+2=d_G(a,b)+1$, which still leads to a contradiction when applied to the penultimate line of \cref{eqn:QXQY}.

Thus we conclude that there are no vertices of $G$ that are not in $P$ and not adjacent to $P$.  Every vertex of $G$ which is not in $P$ must be adjacent to two adjacent vertices in $P$.  This fulfills the first condition for $G$ to be a crowded $(n-k)$-parade.  We will now show that the second condition is also true.

Let $v$ and $v'$ be two vertices outside $P$ that are adjacent to a common vertex $w$ in $P$, and suppose $v,v'$ are not adjacent to each other.  Let $F$ be the graph made from $G$ by adding an edge between $v$ and $v'$.  Now we can assume without loss of generality that the first equation from \cref{lem:maximals} is true (if the second equation is the true one, just swap the names of $a$ and $b$).  Then we have  
\begin{align*}\label{eqn:Fdist}
d_F(a,b)&=d_G(a,v)+1+d_G(v',b)\\
&=d_G(a,v)+d_G(v,w)-d_G(v,w)+1-d_G(w,v')+d_G(w,v')+d_G(v',b)\\
&=d_G(a,v)+d_G(v,w)-1+d_G(w,v')+d_G(v',b)\\
&\geq d_G(a,w)+d_G(w,b)+1\\
&=d_G(a,b)+1.
\end{align*}
(Note that to get from line 3 to line 4, we are applying the fact that an off-parade path between two parade vertices is strictly longer than the parade route.)

However, $G$ is a subgraph of $F$, so we also have $d_F(a,b)\leq d_G(a,b)$.  This is a contradiction.  Hence, $G$ is a crowded $(n-k)$-parade.

We will now argue that $G$ is  $m$-saturated.  For any edge $e$ of $G$ that is not in $P$, if there are fewer than $m$ parallel copies of $e$, then we can add another parallel copy of $e$ without changing any distances in $G$, and without creating any new paths between the endpoints of $P$ that have the same length as $P$.  Hence, adding such an edge does not change the spectator floor value of the graph.  Given that $G$ is maximal amongst graphs with $n$ vertices, at most $m$ parallel copies of each edge, and spectator floor $k$, then, $G$ must already contain all such edges.  Hence $G$ is an $m$-saturated crowded $(n-k)$-parade.\end{proof}

We will now present the second of the two main results in this section, which is the converse of the last theorem, showing that we have a complete characterization of the minor maximal graphs.

\begin{theorem}\label{thm:maxl_2}
Let $k\leq n-2$ and $m\geq1$, or let $k=n-1$ and $m\geq2$.
If $G$ is an $m$-saturated crowded $(n-k)$-parade, then $G$ is minor maximal amongst graphs with $n$ vertices, at most $m$ parallel edges between any given vertices, and spectator floor $k$.
\end{theorem}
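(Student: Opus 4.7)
The case $k = n-1$ follows from Lemma~\ref{lem:1-parade}, so I assume $k \le n - 2$ and set $p := n - k \ge 2$.

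First, $\uspcf{G} = k$. The parade $P$ gives $\uspc{G} \le k$, hence $\uspcf{G} \le k$. Since $\diam(G) = p - 1$ (each off-parade vertex is adjacent to two consecutive parade vertices, limiting all eccentricities), Theorem~\ref{lem:diameter} gives $\uspcf{G} \ge n - \diam(G) - 1 = k$.

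For minor maximality, by minor-monotonicity of $\uspcf$ it suffices to show that for every single-edge extension $H = G + e$ respecting the parallel-edge bound, $\uspcf{H} > k$. By Theorem~\ref{lem:no decontract}, $\uspcf{H}$ equals the minimum of $\uspc{H^*}$ over same-vertex supergraphs $H^*$ of $H$, so it suffices to prove $\usp{H^*} < p$ for every such $H^*$. Suppose for contradiction some $H^*$ admits a unique shortest path of length $\ge p - 1$ between vertices $x$ and $y$. Since $H^* \supseteq G$, $d_{H^*}(x,y) \le d_G(x,y) \le \diam(G) = p - 1$, so $(x,y)$ is a diameter pair of $G$. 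A direct distance calculation shows the diameter pairs are precisely $(v_0, v_{p-1})$, the pairs $(v_0, v)$ with $v \in S_{p-2}$, the pairs $(v_{p-1}, v)$ with $v \in S_0$, and the pairs $(v, v')$ with $v \in S_0$ and $v' \in S_{p-2}$, where $S_i$ denotes the off-parade vertices adjacent to $v_i$ and $v_{i+1}$.

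For each such pair, I will show that in $H$ either $d_H(x,y) < p - 1$ or there are at least two shortest paths of length $p - 1$. This property propagates to $H^*$: additional edges in $H^*$ only shorten distances, while length-$(p-1)$ paths present in $H$ persist in $H^*$ (remaining shortest whenever $d_{H^*}(x,y) = p - 1$), yielding the contradiction. For the pair $(v_0, v_{p-1})$, whose unique shortest path in $G$ is $P$ itself, the case analysis on $e$ proceeds as follows: a parallel parade edge splits $P$ into two parallel versions; a parade chord $\{v_i, v_j\}$ with $j \ge i + 2$ yields $d_H(v_0, v_{p-1}) \le p + i - j < p - 1$; and a new edge $\{v_i, w\}$ with $w \in S_l$, $l \notin \{i-1, i\}$, or $\{u, w\}$ with $u \in S_j$, $w \in S_l$, $|j - l| \ge 2$, yields either a strict shortcut (for larger parade gaps) or a second length-$(p-1)$ path through the newly connected vertex (for the minimal gap, namely $l \in \{i+1, i-2\}$ or $|j - l| = 2$). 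When $m \ge 2$, the remaining diameter pairs are already non-unique in $G$ since their shortest paths traverse off-parade edges that have $m \ge 2$ parallel copies; when $m = 1$, the same style of analysis handles each remaining pair, with one or both endpoints replaced by an off-parade vertex.

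The main obstacle is keeping the case analysis exhaustive and cleanly organized, particularly for $m = 1$, where several parades may coexist in $G$ and each must be shown to break under any permitted edge addition. Each subcase is a short distance calculation exploiting the rigid crowded parade structure, but the bookkeeping across edge types and diameter pairs is the only nontrivial part.
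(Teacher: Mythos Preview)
Your approach is correct but genuinely different from the paper's. The paper argues by contradiction using the forward direction: if $G$ were not maximal, take a maximal $G' \supsetneq G$ in the set; by the preceding theorem $G'$ is itself an $m$-saturated crowded $(n-k)$-parade; one then argues (with a symmetry argument when $m=1$) that $G'$ is crowded on the \emph{same} parade $P$, and then any extra edge of $G'$ over $G$ immediately violates the crowded-parade structure. Your route is direct: you compute $\uspcf{G}=k$ via the diameter bound and then show every admissible one-edge extension $H=G+e$ has $\uspcf{H}>k$ by ruling out unique shortest paths of length $p-1$ in any same-vertex supergraph $H^*$. Your propagation step (two length-$(p-1)$ paths in $H$ survive in $H^*$; a shortcut in $H$ persists) is the crucial observation and is sound.

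What each approach buys: the paper's argument is short and structural, but it leans on the forward theorem and pushes the delicate work into the ``same parade'' step for $m=1$. Your argument is self-contained and makes the $m=1$ difficulty explicit as a case analysis over diameter pairs, which you correctly identify as the main bookkeeping burden. One small inaccuracy: your list of diameter pairs is stated as ``precisely'' those four types, but in fact $(u,w)$ with $u\in S_0$, $w\in S_{p-2}$ can have $d_G(u,w)=p-2$ when enough intermediate $S_i$ are nonempty (there is a path $u\to s_1\to\cdots\to s_{p-3}\to w$ of length $p-2$). This does not harm the proof, since you only need a \emph{superset} of the diameter pairs, but the word ``precisely'' should be softened.
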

\begin{proof}
By \cref{lem:1-parade}, the result holds when $k=n-1$. Therefore, we may assume that $k\leq n-2$.

Suppose that $G$ is an $m$-saturated crowded $(n-k)$-parade, but $G$ is not minor maximal amongst graphs with $n$ vertices, at most $m$ parallel edges between any given vertices, and spectator floor $k$.  Then there is some other graph, call it $G'$, which is minor maximal on that set and of which $G$ is a proper subgraph.  Since $G'$ is minor maximal, it must be an $m$-saturated $(n-k)$-parade as well.

Let us call the parade for which $G$ fulfills the definition of a crowded parade, \cref{def:crowded_parade}, by the name $P$.  First note that if $m>1$, then there are no unique paths in $G'$ except $P$ and its subpaths, hence for $m>1$, $P$ is still the parade for which $G'$ is a crowded parade.

Now suppose $m=1$ (in other words, we are working with simple graphs only) and suppose $P$ is either not a parade in $G'$ or not a parade for which $G'$ is crowded.

Let us call the parade for which $G'$ is a crowded parade by the name $P'$, and the endpoints of $P$ by the names $a,b$ and of $P'$ by the names $a',b'$.  Since $P\neq P'$, at least one of $a',b'$ is not in $P$.  Since $G$ is a subgraph of $G'$, $d_{G'}(v,w)\leq d_G(v,w)$ for all pairs of vertices $v,w$.  So $d_G(a',b')\geq d_{G'}(a',b')=d_G(a,b)=n-k-1$.

A generalized figure of graph $G$ is shown in \cref{fig:gen_crowded_parade}, where path $P$ consists of the vertices labeled $v_1=a$ to $v_{n-k}=b$.  Nodes labeled $K_{m_i}$ represent complete subgraphs on $m_i$ vertices, and bold edges represent a complete set of edges between the connected structures.  It should be understood, however, that it is possible for some $m_i$ to be 0, in which case $K_{m_i}$ is an empty graph, and there are no edges connecting $K_{m_{i-1}}$, $K_{m_{i}}$, and $K_{m_{i+1}}$.  For example, in the previous \cref{fig:crowded_parade}, we had $\{m_1,m_2,m_3,m_4,m_5,m_6\}=\{0,1,2,1,0,3\}$.

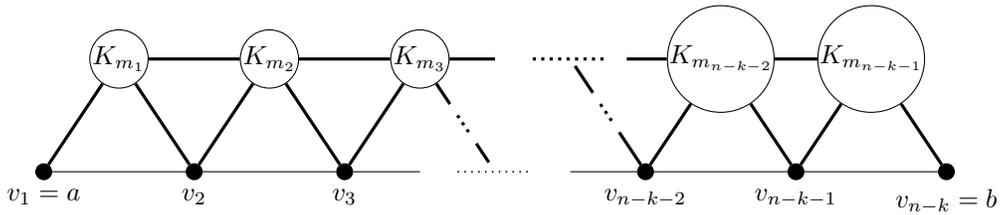
\begin{figure}[!htbp]
\[\begin{tikzpicture}[x=1cm, y=1cm]
\vertex[fill] (a) at (0,0)[label=below:${v_1=a}$]{};
\vertex[fill] (b) at (2,0)[label=below:$v_2$]{};
\vertex[fill] (c) at (4,0)[label=below:$v_3$]{};
\node (d) at (6,0){};
\vertex[fill] (e) at (8,0)[label=below:$v_{n-k-2}$]{};
\vertex[fill] (f) at (10,0)[label=below:$v_{n-k-1}$]{};
\vertex[fill] (g) at (12,0)[label=below:${v_{n-k}=b}$]{};

\vertex[] (ab) at (1,1.5){$K_{m_1}$};
\vertex[] (bc) at (3,1.5){$K_{m_2}$};
\vertex[] (cd) at (5,1.5){$K_{m_3}$};
\vertex[] (ef) at (9,1.5){$K_{m_{n-k-2}}$};
\vertex[] (fg) at (11,1.5){$K_{m_{n-k-1}}$};
\draw (a) to (c);
\draw(c) to (5,0); \draw[dotted,semithick] (5.5,0) to (6.5,0); \draw (7,0) to (e);
\draw (e) to (g);
\draw[very thick] (ab) to (bc) to (cd);
\draw[very thick](cd) to (6,1.5); \draw[very thick, dotted] (6.5,1.5) to (7.5,1.5); \draw[very thick] (7.75,1.5) to (ef); 
\draw[very thick] (ef) to (fg);
\draw[very thick] (a) to (ab) to (b) to (bc) to (c) to (cd);
\draw[very thick] (e) to (ef) to (f) to (fg) to (g);
\draw[very thick] (cd) to (5.4,.9); \draw[very thick, dotted] (5.5, .75) to (5.65,.525); \draw[very thick] (5.75,.375) to (d); 
\draw[very thick] (de) to (7.3,1.05); \draw[very thick, dotted] (7.4,.9) to (7.55, .675); \draw[very thick] (7.65,.525) to (e);
 
\end{tikzpicture}\]
\caption{A generalized figure of $G$, a simple crowded $(n-k)$-parade.}
\label{fig:gen_crowded_parade}
\end{figure}

For graph $G$, since $d_G(a',b')\geq n-k-1$, it must be the case that either $a'$ or $b'$ is in $K_{m_1}$ or $K_{m_{n-k-1}}$, which are on opposite ends of the graph.  Furthermore, in order for the path $P'$ from $a'$ to $b'$ to be a unique shortest path in $G'$, it is necessary that $K_{m'_2}$ and $K_{m'_{n-k-2}}$ be empty, where by $K_{m'_i}$ we mean the subgraph in $G'$ corresponding to $K_{m_i}$ in $G$.  If these are not empty, then there would be many alternate routes of equal length connecting $a'$ to $b'$.

Since $K_{m'_2}$ and $K_{m'_{n-k-2}}$ are empty, and $G'$ is simple, there are graph symmetries identifying any vertex in $K_{m'_1}$ with any other vertex in $K_{m'_1}$ as well as $v_1$.  Likewise, there are graph symmetries identifying any vertex in $K_{m'_{n-k-1}}$ with any other vertex in $K_{m'_{n-k-1}}$ and $v_{n-k}$.  Thus, there is a graph symmetry identifying $P$ with $P'$. Therefore, $P$ is still a parade in $G'$. Moreover, we can assume that $P$ is the parade for which $G'$ is a crowded parade.

In any case, we now have that both $G$ and $G'$ are crowded $(n-k)$-parades on the same number of vertices and are crowded on the same parade $P$, and $G$ is a proper subgraph of $G'$.  Thus $G'$ contains at least one edge $e$ that is not in $G$.  There are three cases: either $e$ connects two vertices in $P$, or it connects a vertex in $P$ to a vertex not in $P$, or it connects two vertices not in $P$.

If $e$ connects two vertices in $P$, then $P$ would not be a parade of $n-k$ vertices in $G'$, so this is a contradiction.  If $e$ connects a vertex in $P$ to a vertex not in $P$, then this provides an alternate path of the same or shorter length between the endpoints of $P$, in contradiction to $P$ being a parade.  Finally, if $e$ connects two vertices not in $P$, this also provides an alternate path of the same or shorter length between the endpoints of $P$.  In any case, we get a contradiction.  The theorem is thus proven.
\end{proof}

\section{Further Questions}
\label{further questions}
There are many additional questions that one may consider in this line of research. In this paper, we have characterized the minor-minimal graphs $G$ with $\uspcf{G}=k$ for $k=1$ and $k=2$. For larger, $k$, consider the following.

\begin{question}

Can we characterize the minor-minimal graphs $G$ with $\uspcf{G}=k$ for $k=3$?  $k=4$?  Etc.
\end{question}

Algorithmic questions related to the spectator number and spectator floor of a graph have not been considered in this paper, but we hope to work on some of these questions in the future.

\begin{question}
Can the minor-monotone floor of the spectator number be computed in polynomial time?
\end{question}

\begin{question}
In the algorithm for calculating the minor-monotone floor of the spectator number, what are the optimal edges to add?
\end{question}

\begin{question}
Can the minor-monotone floor of the spectator number be computed with a ``greedy algorithm"? (That is, can we add a set of edges to a graph $G$ to obtain a supergraph $H$ such that each time we add an edge the spectator number is weakly decreasing? strictly decreasing?) 
\end{question} 



A related, but distinct question is the following.

\begin{question}
If $\uspc{G}=k$ and $\uspcf{G}=m$, can we always find a supergraph $F$ of $G$ that achieves $\uspc{F}=i$ for all $i$ such that $m\leq i \leq k$?
\end{question}

Finally, one can consider how well this bound relates to our original motivation.

\begin{question}
\textcolor{black}{For what graphs $G$ do we have $\uspc{G}=n-q(G)$?}

\end{question}

\section*{Acknowledgements}
This work started at the MRC workshop “Finding Needles in Haystacks: Approaches to Inverse
Problems using Combinatorics and Linear Algebra”, which took place in June 2021 with support from the
National Science Foundation and the American Mathematical Society. The authors are grateful to the
organizers of this meeting.

In particular, this material is based upon work supported by the National Science Foundation under Grant Number DMS 1916439.

{\color{black}The authors are also grateful to the referees for their helpful comments.}
\bibliography{Revision_of_L210}{}

\begin{thebibliography}{10}

\bibitem{Hardness}
Ashkan Aazami.
\newblock {\em Hardness results and approximation algorithms for some problems
  on graphs}.
\newblock PhD thesis, University of Waterloo, 2008.
\newblock \url{http://hdl.handle.net/10012/4147}.

\bibitem{MinimumDistinct}
Bahman {Ahmadi}, Fatemeh {Alinaghipour}, Michael~S. {Cavers}, Shaun {Fallat},
  Karen {Meagher}, and Shahla { Nasserasr}.
\newblock {Minimum number of distinct eigenvalues of graphs}.
\newblock {\em {Electron. J. Linear Algebra}}, 26:673--691, 2013.

\bibitem{Parameters}
Francesco Barioli, Wayne Barrett, Shaun~M. Fallat, H.~Tracy Hall, Leslie
  Hogben, Bryan Shader, P.~van~den Driessche, and Hein van~der Holst.
\newblock Parameters related to tree-width, zero forcing, and maximum nullity
  of a graph.
\newblock {\em J. Graph Theory}, 72(2):146--177, 2013.

\bibitem{SSP2020}
Wayne Barrett, Steve Butler, Shaun~M. Fallat, H.~Tracy Hall, Leslie Hogben,
  Jephian C.-H. Lin, Bryan~L. Shader, and Michael Young.
\newblock The inverse eigenvalue problem of a graph: Multiplicities and minors.
\newblock {\em J. Combin. Theory Ser. B}, 142:276--306, 2020.
\newblock \url{https://arxiv.org/abs/1708.00064}.

\bibitem{SSP2017}
Wayne Barrett, Shaun Fallat, H.~Tracy Hall, Leslie Hogben, Jephian~C.{-}H. Lin,
  and Bryan~L. Shader.
\newblock Generalizations of the strong arnold property and the minimum number
  of distinct eigenvalues of a graph.
\newblock {\em Electron. J. Comb.}, 24(2):P2.40, 2017.
\newblock
  \url{http://www.combinatorics.org/ojs/index.php/eljc/article/view/v24i2p40},
  \url{https://arxiv.org/abs/1511.06705}.

\bibitem{Analysis}
Beth {Bjorkman}, Leslie {Hogben}, Scarlitte {Ponce}, Carolyn {Reinhart}, and
  Theodore {Tranel}.
\newblock {Applications of analysis to the determination of the minimum number
  of distinct eigenvalues of a graph}.
\newblock {\em {Pure Appl. Funct. Anal.}}, 3(4):537--563, 2018.

\bibitem{B93}
H.~L. Bodlaender.
\newblock A tourist guide through treewidth.
\newblock {\em Acta Cybernet.}, 11(1--2):1--23, 1993.

\bibitem{F10}
C.M. da~Fonseca.
\newblock A lower bound for the number of distinct eigenvalues of some real
  symmetric matrices.
\newblock {\em Electron. J. Linear Algebra}, 21:3--11, 2010.

\bibitem{Diestel2016}
Reinhard Diestel.
\newblock {\em Graph Theory}.
\newblock Volume~173 of {\em Graduate Texts in Mathematics}. Springer, Berlin,
  fifth edition, 2017.

\bibitem{spec_floor_github_repo}
Craig Erickson and H.~Tracy Hall.
\newblock Spectator floor number.
\newblock \url{https://github.com/cerickson30/spectator_floor}.

\bibitem{HLS2022}
Leslie Hogben, Jephian~C.{-}H. Lin, and Bryan~L. Shader.
\newblock {\em Inverse Problems and Zero Forcing for Graphs}.
\newblock Volume~270 of {\em . Math. Surveys Monogr.} American Mathematical
  Society, Providence, Rhode Island, 2022.

\bibitem{Nordhaus}
Rupert~H. Levene, Polona Oblak, and Helena \v{S}migoc.
\newblock A {N}ordhaus–{G}addum conjecture for the minimum number of distinct
  eigenvalues of a graph.
\newblock {\em Linear Algebra Appl.}, 564:236--263, 2019.

\bibitem{sagemath}
{The Sage Developers}.
\newblock {\em {S}ageMath, the {S}age {M}athematics {S}oftware {S}ystem
  ({V}ersion 9.2)}, 2020.
\newblock {\tt https://www.sagemath.org}.

\end{thebibliography}
\bibliographystyle{plain}

\end{document}